\author[C\'{e}bron, Dahlqvist,and Male]
{Guillaume C\'{e}bron$^\sharp$, 
Antoine Dahlqvist$^\dagger$, and Camille Male$^\star$}
\thanks{$\sharp$ Researchsupported in part by the ERC advanced grant ``Non-commutative distributions in free probability", held by Roland Speicher.\\\indent 
$\dagger$ Research supported in part by RTG 1845 and  EPSRC grant EP/I03372X/1. \\\indent 
$\star$ Research supported in part by the Fondation Sciences Math\'ematiques de Paris and the University Paris Descartes UMR 8145. \\\indent
$\sharp, \dagger, \star$ Research supported in part by the INSMI-CNRS through a PEPS JCJC grant.}
\address{IMT; UMR 5219\\
	 Universit\'{e} de Toulouse; CNRS\\
	 UPS, F-31400 Toulouse, France}
       \email{\href{mailto:guillaume.cebron@math.univ-toulouse.fr}{guillaume.cebron@math.univ-toulouse.fr}}
       \address{Department of Mathematics\\
         University of Sussex\\
         Falmer Campus\\
         Brighton BN1 9QH, UK}
       \email{\href{mailto:a.dahlqvist@sussex.ac.uk}{a.dahlqvist@sussex.ac.uk}}
       \address{Universit\'{e} de Bordeaux\\
         Institut de Math\'{e}matiques de Bordeaux\\
         351 Cours de la Lib\'{e}ration\\
         33400 Talence\\
         France}
       \email{\href{mailto:camille.male@math.u-bordeaux.fr}{camille.male@math.u-bordeaux.fr}}     
\date{\today}
\title[Universal Constructions for Traffic Spaces]{Traffic Distributions and Independence II: Universal Constructions for Traffic Spaces}
\subjclass[2010]{15B52; 46L53; 46L54; 60B20}
\keywords{Freeness with amalgamation; permutation invariance; random matrices; traffic probability}
\newtheorem{Th}{Theorem}[section]
\newtheorem{Def}[Th]{Definition}
\newtheorem{Prop}[Th]{Proposition}
\newtheorem{Lem}[Th]{Lemma}
\newtheorem{Cor}[Th]{Corollary}
\theoremstyle{remark}
\newtheorem{Ex}[Th]{Example}
\newtheorem{Rk}[Th]{Remark}
\renewcommand\leq\leqslant
\renewcommand\geq\geqslant
\def\Wg{\mathrm{Wg}}
\def\Id{\mathrm{Id}}
\def\Tr{\mathrm{Tr}}
\def\UN{\mathrm{U}(N)}
\def\esp{\mathbb E}
\def\N{\mathbb N}
\def\etc{,\ldots ,}
\def\one{\mathbbm{1}}
\def\limN{\underset{N \rightarrow \infty}\longrightarrow}
\def\Nlim{\underset{N \rightarrow \infty}\lim}
\def\C{\mathbb C}
\def\eps{\varepsilon}
\def\lara{\langle \mathbf x, \mathbf x^*\rangle}
\def\larac{\langle \mcal A\rangle}
\def\eq{\begin{eqnarray*}}
\def\qe{\end{eqnarray*}}
\def\eqa{\begin{eqnarray}}
\def\qea{\end{eqnarray}}
\newcommand{\circlearrowleftRot}{\mathbin{\text{\rotatebox[origin=c]{90}{$\circlearrowleft$}}}}
\newcommand{\circlearrowleftRotBis}{\mathbin{\text{\rotatebox[origin=c]{-90}{$\circlearrowright$}}}}
\def\mbf{\mathbf}
\def\mcal{\mathcal}
\def\mbb{\mathbb}
\def\trm{\textrm}
\def\mrm{\mathrm}
\def\Circlearrowleft{\ensuremath{%
  \rotatebox[origin=c]{180}{$\circlearrowleft$}}}
\def\bor{\begin{color}{orange}}
\def\eor{\end{color}}
\def\bte{\begin{color}{teal}}
\def\ete{\end{color}}
\begin{document}
\maketitle

\begin{center}
\begin{minipage}{14cm}
\begin{center}{\sc abstract:}\end{center}We investigate questions related to the notion of \emph{traffics} introduced by the third author as a non-commutative probability space with additional operations and equipped with the notion of \emph{traffic independence}. We prove that any sequence of unitarily invariant random matrices that converges in non-commutative distribution converges as well in traffic distribution whenever it fulfils some factorisation property. We provide an explicit description of the limit which allows to recover and extend some applications (a result by Mingo and Popa on the asymptotic freeness from the transposed ensembles, and of Accardi, Lenczewski and Salapata on the freeness of infinite transitive graphs). We also improve the theory of traffic spaces by considering a positivity axiom related to the notion of \emph{state} in non-commutative probability. We construct the free product of traffic spaces and prove that it preserves the positivity condition. This analysis leads to our main result stating that every non-commutative probability space endowed with a tracial state can be enlarged and equipped with a structure of traffic space. 
\end{minipage}
\end{center}
~\\~\\~
\tableofcontents

\section{Introduction}

\subsection{Presentation of the results}
\subsubsection{Motivations for traffics}

Thanks to the fundamental work of Voiculescu~\cite{Voiculescu1991}, it is now understood that free probability is a good framework for the study of large random matrices. Here are two important considerations which sum up the role of non-commutative probability in the description of the macroscopic behavior of large random matrices:
\begin{enumerate}
\item A large class of families of random matrices $\mbf A_N\in \mrm{M}_N(\mbb C)$ \emph{converge in non-commutative distribution} as $N$ tends to $\infty$ (in the sense that the normalized trace of any polynomial in the matrices converges).
\item If two independent families of random matrices $\mbf A_N$ and $\mbf B_N$ converge separately in non-commutative distribution and are invariant in law when conjugating by a \emph{unitary} matrix, then the joint non-commutative distribution of the family $\mbf A_N\cup\mbf B_N$ converges as well. The joint limit can be described from the separate limits thanks to the relation of \emph{free independence} introduced by Voiculescu.
\end{enumerate}

In~\cite{Male2011,Male122,MP14}, it was pointed out that there are cases where other important macroscopic convergences occur in the study of large random matrices and graphs. One example is the adjacency matrix of the so-called sparse Erd\"os-Re\'nyi graph: it is the symmetric real random matrix $X_N$ whose sub-diagonal entries are independent and distributed according to Bernoulli random variable with parameter $\frac p N$, where $p$ is fixed. Let $Y_N$ be a deterministic matrix bounded in operator norm. Then the possible limiting $^*$-distributions of $(X_N,Y_N)$ depend on more than the limiting $^*$-distribution of $Y_N$ \cite{Male122}.
  
  The notion of non-commutative probability is too restrictive and should be generalized to get more information about the limit in large dimension. This is precisely the motivation to introduce the concept of \emph{traffic space}, which comes together with its own notions of distribution and independence: a traffic space is a non-commutative probability space where one can consider not only the usual operations of algebras, but also more general $n$-ary operations called \emph{graph operations}.
We will introduce those concept in detail, but let us first describe the role of traffics enlightened  in~\cite{Male2011} for the description of large $N$ asymptotics of random matrices:\begin{enumerate}
\item A large class of families of random matrices $\mbf A_N\in \mrm{M}_N(\mbb C)$ \emph{converge in traffic distribution} as $N$ tends to $\infty$ (in the sense that the normalized trace of any \emph{graph operation} in the matrices converges).
\item If two independent families of random matrices $\mbf A_N$ and $\mbf B_N$ converge separately in traffic distribution, satisfy a factorization property and are invariant in law when conjugating by a \emph{permutation} matrix, then the joint traffic distribution of the family $\mbf A_N\cup\mbf B_N$ converges as well. Moreover, the joint limit can be described from the separate limits thanks to the relation of \emph{traffic independence} introduced in \cite{Male2011}.
\end{enumerate}

As a sequel of \cite{Male2011}, the purpose of this monograph is to develop the theory of traffics and provide more examples.

\subsubsection{Limiting traffic distribution of large unitarily invariant random matrices}

For concreteness, we first describe how we encode new operations on matrix spaces and state one example of matrices that are considered in this monograph. 

For all $K\geq 0$, a \emph{$K$-graph operation} is a connected graph $g$ with $K$ oriented and ordered edges, and two distinguished vertices (one input and one output, not necessarily distinct).
The set $\mathcal G$ of graph operations is the set of all $K$-graph operations for all $K\geq 0$.
A $K$-graph operation $g$ has to be thought as an operation that accepts $K$ objects and produces a new one. 
\par For example, it acts on the space $\mrm{M}_N(\mbb C)$ of $N$ by $N$ complex matrices as follows. For each $K$-graph operation $g\in \mathcal G$, we define a linear map $Z_g: \mrm{M}_N(\mbb C) \otimes \cdots \otimes \mrm{M}_N(\mbb C)\to \mrm{M}_N(\mbb C)$ in the following way. Denoting by
 \begin{itemize}
 	\item  $V$ the vertex set of $g$,
	\item $(v_1,w_1),\ldots, (v_K,w_K)$ the ordered edges of $g$,
	\item $in$ and $out$ the distinguished vertices of $g$,
	\item $E_{k,l}$ the matrix unit $(\delta_{ik}\delta_{jl})_{i,j=1}^N\in \mrm{M}_N(\mbb C)$,
\end{itemize}
we set, for all $A_1,\ldots,A_K\in \mrm{M}_N(\mbb C)$, 
	$$Z_g(A_1 \otimes \dots \otimes A_K)=\sum_{\phi:V\to \{1,\ldots,N\}}\left( \prod_{k=1}^K A_k \big( \phi(w_k),\phi(v_k) \big) \right)\cdot E_{\phi(out),\phi(in)}.$$

Those operations appear quite naturally in investigations of random matrices, see for instance \cite[Appendix A.4]{Bai2010} and \cite{Mingo2012}. Following~\cite{Mingo2012}, we can think of the linear map $\mbb C^N\to \mbb C^N$ associated to $Z_g(A^{(1)}\otimes \dots \otimes A^{(K)})$ as an algorithm, where we feed a vector into the input vertex and then operate it through the graph, each edge doing some calculation thanks to the corresponding matrix $A^{(i)}$, and each vertex acting like a logic gate, doing some compatibility checks. This description relies only on the so-called \emph{commutative special $\dagger$-Frobenius comonoid structure} of matrix spaces \cite{CPV2013}.

The linear maps $Z_g$ encode naturally the product of matrices, but also other natural operations, like the Hadamard (entry-wise) product $(A,B)\mapsto A\circ B$, the real transpose $A\mapsto A^t$ or the degree matrix $deg(A) = diag(\sum_{j=1}^NA_{i,j})_{i=1\etc N}$.

Starting from a family $\mbf A=(A_j)_{j\in J}$ of random matrices of size $N\times N$, the smallest algebra close by adjunction and by the action of the $K$-graph operations is the \emph{traffic space} generated by $\mbf A_N$. The \emph{traffic distribution} of $\mbf A_N$ is the data of the non-commutative distribution of the matrices which are in the traffic space generated by $\mbf A_N$. More concretely, it is the collection of the quantities
$$\frac{1}{N}\esp\Big[\Tr \big(Z_g(A_{j_1}^{\epsilon_1}\otimes \dots \otimes A_{j_K}^{\epsilon_K})\big)\Big]$$
for all $K$-graph operations $g\in\mathcal{G}$, all $j_1,\ldots, j_K\in J$ and all $\epsilon_1,\cdots, \epsilon_K\in \{1,\ast\}$. 

In this monograph, we prove the following theorem. It shows that for a general class of  unitarily invariant matrices, the convergence of the $*$-distribution is sufficient to deduce the convergence in traffic distribution.

\begin{Th}\label{Th:Matrices}For all $N\geq 1$, let $\mbf A_N=(A_j)_{j\in J}$ be a family of random matrices in $\mrm{M}_N(\mbb C)$. We assume
\begin{enumerate}
\item The unitary invariance: for all $N\geq 1$ and all $U\in \mrm{M}_N(\mbb C)$ which is unitary, $U\mbf A_NU^*:=(UA_jU^*)_{j\in J}$ and $\mbf A_N$ have the same law.
\item The convergence in $*$-distribution of $\mbf A_N$: for all  indices $j_1,\ldots, j_K\in J$ and labels $\epsilon_1,\cdots, \epsilon_K\in \{1,\ast\}$, the quantity $(1/N)\esp[\Tr (A_{j_1}^{\epsilon_1}\cdots A_{j_K}^{\epsilon_K})]$ converges.
\item The factorization property: for all $*$-monomials $m_1,\ldots,m_k$, we have the following convergence
\begin{multline*}\lim_{N\to \infty} \mathbb{E}\left[\frac{1}{N}\mathrm{Tr}\left(m_1(\mbf A_N)\right)\cdots \frac{1}{N}\mathrm{Tr}\left(m_k(\mbf A_N)\right)\right]\\=\lim_{N\to \infty} \mathbb{E}\left[\frac{1}{N}\mathrm{Tr}\left(m_1(\mbf A_N)\right)\right]\cdots \lim_{N\to \infty} \mathbb{E}\left[\frac{1}{N}\mathrm{Tr}\left(m_k(\mbf A_N)\right)\right].\end{multline*}
\end{enumerate}
Then, $\mbf A_N$ converges in traffic distribution: for all $K$-graph operation $g\in\mathcal{G}$, indices $j_1,\ldots, j_K\in J$ and labels $\epsilon_1,\cdots, \epsilon_K\in \{1,\ast\}$, the following quantity converges
$$\frac{1}{N}\esp\Big[\Tr \big(Z_g(A_{j_1}^{\epsilon_1} \otimes \ldots \otimes A_{j_K}^{\epsilon_K})\big)\Big].$$
The limit of the traffic distribution of $\mbf A_N$ is unitarily invariant and depends explicitly on the limit of the non-commutative $*$-distribution of $\mbf A_N$.
\end{Th}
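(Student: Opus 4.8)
The plan is to turn the unitary invariance into an explicit integration over the unitary group, and then to read off convergence from a genus-type expansion in which hypotheses (2) and (3) enter at the very end. Since for a fixed $K$-graph operation $g$ every sum below is finite, it suffices to control each resulting term separately. First I would symmetrize: by assumption (1), $\mathbf A_N$ has the same law as $U\mathbf A_N U^*$ for every fixed unitary $U$, hence also when $U$ is Haar distributed on $\UN$ and independent of $\mathbf A_N$, so that $\frac1N\esp[\Tr(Z_g(\cdots))]=\frac1N\esp_{\mathbf A}\esp_U[\Tr(Z_g(U\mathbf A_N U^*))]$. Writing $A^{(e)}=A_{j_e}^{\epsilon_e}$ for the matrix carried by the $e$-th edge $(v_e,w_e)$ of $g$, and using that applying $\frac1N\Tr$ to $Z_g$ forces $k(out)=k(in)$ and thus closes the graph, I would expand each conjugated entry
$$(UA^{(e)}U^*)_{k(w_e),k(v_e)}=\sum_{a_e,b_e}U_{k(w_e),a_e}\,A^{(e)}_{a_e,b_e}\,\overline{U_{k(v_e),b_e}},$$
so that the quantity becomes a sum over vertex labelings $k\colon V\to\{1,\dots,N\}$ and internal indices $(a_e,b_e)_e$ of a product of $K$ factors $U$, $K$ factors $\overline U$, and the scalars $A^{(e)}_{a_e,b_e}$.

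Next I would integrate out $U$ with the Weingarten calculus, which produces $\sum_{\sigma,\tau\in S_K}\Wg(N,\sigma\tau^{-1})\prod_e\delta_{k(w_e),k(v_{\sigma(e)})}\,\delta_{a_e,b_{\tau(e)}}$. The vertex constraints $\delta_{k(w_e),k(v_{\sigma(e)})}$, together with $k(out)=k(in)$, force $k$ to be constant on the blocks of a partition of $V$ determined by $\sigma$ and the edge data of $g$; summing $k$ freely over those blocks contributes a factor $N^{c(\sigma)}$, where $c(\sigma)$ is the number of blocks. Dually, the constraints $\delta_{a_e,b_{\tau(e)}}$ glue the entries $A^{(e)}_{a_e,b_e}$ into a product of traces of $*$-monomials in $\mathbf A_N$, exactly one trace per cycle of $\tau$. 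Thus $\frac1N\esp[\Tr(Z_g(\cdots))]$ is a finite sum, over pairs $(\sigma,\tau)\in S_K\times S_K$, of terms
$$\tfrac1N\,\Wg(N,\sigma\tau^{-1})\,N^{c(\sigma)}\;\esp\Big[\textstyle\prod_{i}\Tr\big(m_i(\mathbf A_N)\big)\Big],$$
where the number of traces equals the number of cycles of $\tau$.

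The analytic core is then power counting. Inserting the known asymptotics $\Wg(N,\pi)=N^{-K-|\pi|}\big(\mathrm{Moeb}(\pi)+O(N^{-2})\big)$, with $|\pi|$ the minimal number of transpositions in $\pi$, and writing each trace as $\Tr(m_i)=N\cdot\frac1N\Tr(m_i)$ to expose one factor $N$ per cycle of $\tau$, I would check that the exponent of $N$ in each $(\sigma,\tau)$-term assembles into (minus) an Euler characteristic of the surface obtained by gluing $g$ according to $(\sigma,\tau)$. The hard part will be precisely this bookkeeping: proving that the exponent is always $\le 0$, so that every term stays bounded uniformly in $N$, and identifying exactly the maximal (planar) configurations for which it equals $0$, which alone survive in the limit. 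This is where the whole difficulty concentrates, since it requires relating $c(\sigma)$, $|\sigma\tau^{-1}|$ and the number of cycles of $\tau$ through the combinatorics of $g$.

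Once the $N$-powers are controlled, the two remaining hypotheses close the argument cleanly. After pulling out the powers of $N$, each term contains a factor $\esp[\prod_i\frac1N\Tr(m_i(\mathbf A_N))]$; assumption (3) replaces it by $\prod_i\lim_N\frac1N\esp[\Tr(m_i(\mathbf A_N))]$, and assumption (2) guarantees that each of these factors converges. Since the Weingarten weights $N^{K+|\pi|}\Wg(N,\pi)$ converge to $\mathrm{Moeb}(\pi)$ and the indexing set $S_K\times S_K$ is finite, every term converges; summing the leading ($N^0$) terms yields the limit of $\frac1N\esp[\Tr(Z_g(\cdots))]$, which proves the convergence in distribution of traffics.
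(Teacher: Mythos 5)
Your strategy coincides with the paper's: symmetrize by an independent Haar unitary, expand with the Weingarten formula into a sum over pairs of permutations of the edge set, read off a factor $N^{c(\sigma)}$ from the vertex constraints and a product of traces indexed by the cycles of $\tau$, and finish with the factorization hypothesis. The setup up to the displayed $(\sigma,\tau)$-term is correct. But the proof has a genuine gap exactly where you say ``the whole difficulty concentrates'': you assert, without proof, that the exponent of $N$ in each term is $\le 0$ and that the maximal configurations are the ``planar'' ones. Without this bound the argument does not establish convergence at all, since a single term with positive exponent would make the (finite) sum diverge. Moreover ``planar'' is not the correct characterization of the surviving configurations: in this setting they are the permutations $\sigma$ for which the quotient graph $T^{\pi(\sigma)}$ is an \emph{oriented cactus}, paired with $\tau$ lying on a geodesic from the identity to $\sigma$. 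So the combinatorial heart of the theorem is announced but not supplied.

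The paper fills this gap in two separated steps that you may want to compare with your single Euler-characteristic bookkeeping. First, the sum over the second permutation $\beta$ (your $\tau$) is not power-counted by hand: it is absorbed into the convolution $(\phi_N\star_N \Wg_{n,N})(\alpha)$, and the asymptotics $N^{2n}\Omega_{n,N}^{-1}\Wg_{n,N}=\mu+o(1)$ (equation \eqref{WeingartenCumulantsLibres}) show this converges to $(\phi\star\mu)(\alpha)=k(\alpha)$, a product of free cumulants; the triangle inequality for the metric $d(\alpha,\beta)=n-\#(\beta\alpha^{-1})$ is what suppresses the non-geodesic $\beta$'s, and this is precisely where hypotheses (2) and (3) enter, via $\phi_N=\phi+o(1)$. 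Second, the only remaining power counting concerns $\alpha$ alone: Lemma \ref{lemmabijection} builds a connected bipartite graph $G_\alpha$ whose vertices are the cycles of $\alpha$ and the blocks of $\pi(\alpha)$ and whose edges are in bijection with $E$, whence $\#\pi(\alpha)+\#\alpha\le \#E+1$, with equality if and only if $G_\alpha$ is a tree, if and only if $T^{\pi(\alpha)}$ is an oriented cactus; the equality cases are then in bijection with the partitions $\pi$ making $T^\pi$ an oriented cactus. This decomposition (metric inequality on $\mcal S_E$ plus a one-permutation graph lemma) is what replaces, and makes rigorous, the genus argument you sketch; to complete your proof along your own lines you would need to prove the analogous inequality relating $c(\sigma)$, $\#\tau$ and $\#(\sigma\tau^{-1})$ to $\#E$, which reduces to exactly these two facts.
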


Note that the convergence is about macroscopic quantities build from the matrices. However, it contains more information than the convergence in $^*$-moments.

A recent result of Mingo and Popa \cite{MingoPopa2014} tells that for all sequence of unitarily invariant random matrices $\mbf A_N$, then the family $\mbf A_N^t$ of the transposes is asymptotically freeness with $\mbf A_N$ (under assumptions stronger than those of Theorem~\ref{Th:Matrices} that also imply the asymptotic free independence of second order). Thanks to the description of the limiting traffic distribution of unitarily invariant matrices, we get that for a family $\mbf A_N=(A_j)_{j\in J}$  as in Theorem \ref{Th:Matrices}, $\mbf A_N$, $\mbf A_N^t$ and $deg(\mbf A_N)$ are asymptotically free independent.

A result similar to Theorem \ref{Th:Matrices}, about the convergence of the permutation invariant observables on random matrices, is also proved independently by Gabriel in \cite{Gabriel2015}. More generally, up to some conventions the framework developed in \cite{Gabriel2015a,Gabriel2015,Gabriel2015c} is equivalent to the framework of traffics. Interestingly, it develops aspects that are not yet considered for traffics, such as the central notion of cumulants.

\subsubsection{Non-commutative probability spaces and traffic spaces}

We now introduce the abstract notion of traffic spaces. The purpose is to define a structure for the limit of large matrices that captures the limiting traffic distribution, in a similar way the model of non-commutative random variables captures the limiting joint distribution of large matrices in the theory of free probability.

We first recall the setting of non-commutative probability. A non-commutative probability space is a pair $(\mcal A, \Phi)$, where $\mcal A$ is an algebra and $\Phi$ is linear form. One often assumes that $\mcal A$ is unital and $\Phi(1_\mcal A)=1$, and that $\Phi$ is a trace, that is $\Phi(ab)=\Phi(ba)$ for any $a,b \in \mcal A$.  A $^*$-probability space is a unital non-commutative probability space equipped with an anti-linear involution $\cdot ^*$ satisfying $(ab)^*=b^*a^*$ and such that $\Phi$ is positive, that is $\Phi(a^*a)\geq 0$ for any $a\in \mcal A$. 
The distribution of a family $\mbf a$ of elements of a non-commutative probability space is the linear form $\Phi_{\mbf a}:P \mapsto \Phi\big(P(\mbf a)\big)$ defined for non-commutative polynomials in elements of $\mbf a$. On $^*$-probability spaces, the $^*$-distribution is defined by the same formula for non-commutative polynomials in the elements and their adjoints. The convergence in ($^*$-)distribution of a sequence $\mbf a_N$ is the pointwise convergence of $\Phi_{\mbf a_N}$.

\par An algebraic traffic space is equivalent to the data of a non-commutative probability space $(\mcal A, \Phi)$ and of a collection of $K$-linear maps from $\mcal A^K$ to $\mcal A$ indexed by the $K$-graph operations satisfying mild assumptions. More precisely, to each $K$-graph operation $g\in \mcal G$ there is a linear map $$Z_g: \underbrace{\mathcal{A}\otimes \cdots \otimes \mathcal{A}}_{K\ \text{times}}\to \mathcal{A}$$ subject to some requirements of compatibility. Namely, it should be a so-called operad algebra over the set of graph operations (Definition~\ref{Def:Gaction}). The traffic distribution of a family $\mbf a = (a_j)_{j\in J} \in \mcal A^J$ is equivalent to the collection of the quantities $\Phi\big[ Z_g(a_{\gamma(1)} \otimes \dots \otimes a_{\gamma(K)})\big]$ for any $K$ graph operation $g$ and for any map $\gamma: \{1\etc K\}\to J$. Actually, the definition of the traffic spaces will be given as pairs $(\mcal A, \tau)$, where $\tau$ is a combinatorial function that is equivalent to the data of $\Phi$, although it is more intrinsic.

Finally, a \emph{traffic} (an element of $\mcal A$) is a non-commutative random variable, albeit coming with more information, as the action of graph operations permits to consider additional operations: the Hadamard product, the transpose, the degree, etc. As an example, let us highlight that if a matrix $A_N$ converges in traffic distribution to $a\in \mcal A$, the joint non-commutative distribution of $A_N, A_N \circ A_N, A_N^t, deg(A_N),\dots $ converges to the distribution of $a, a\circ a , a^t, deg(a), \dots$ in $(\mcal A, \Phi)$.

\subsubsection{Independence and positivity}

Voiculescu's definition of freeness is the vanishing of the trace in alternating product of centered elements. Contrary, 
the original definition of traffic independence is based on a notion of transform specific to traffic spaces, and hence is formally far from the latter one. In Theorem \ref{Equivalence Free product free independence} \emph{we present a natural characterization of traffic independence} which is the analogue of Voiculescu's fundamental definition. Roughly speaking, traffic independence is the vanishing of the trace in alternating \emph{operations} of \emph{reduced} elements. The \emph{operations} are no longer products by more complex patterns from a larger operad (the bi-graph operations, also called wiring diagrams \cite{RuSp13,Sp13,Sp15,VaSpEu15}). As well, the notion of \emph{reduceness} must be defined in terms specific to traffic theory. 

We deduce from it a simple criterion to characterize the free independence of variables assuming their traffic independence. An example of application is a new proof of the free independence of the spectral distributions of the free product of infinite deterministic graphs \cite{Accardi2007}. Another development of this is the connection with freeness over the diagonal, presented in \cite{ACDGM}.

Moreover in non-commutative probability theory, the three products of noncommutative probability spaces, that are in relation with the notions of tensor, free and Boolean independence,  preserve the positivity of the linear form. A second contribution of the present paper is the definition of the free product of traffic spaces which yields to the appropriate notion of independence for traffics defined in~\cite{Male2011}. More precisely, in Section \ref{Sec:FreeProdAlg}, for any collection $\mcal A_j, j\in J$ of algebraic traffic spaces (with traces $\Phi_j$), we define their free product $\ast_{j\in J}\mcal A_j$, in such a way that the algebras $A_j$ seen as traffic subspaces of $\ast_{j\in J}\mcal A_j$ are traffic independent with respect to the canonical trace.

It has to be noted that the positivity of the traces $\Phi_j$ on the spaces $\mcal A_j$ is not sufficient to ensure the positivity of the resulting trace on $\ast_{j\in J}\mcal A_j$. One has to require more positivity conditions on $\Phi_j$ to get positivity at the end. This is one motivation to define the good notion of positivity for traffic spaces.
In Definition~\ref{Def:Traffic} of Section \ref{Sec:Recall}, we define a \emph{traffic space} as an algebraic traffic space $\mcal A$ with trace $\Phi$ with two additional properties: the compatibility of the involution $\cdot ^*$ with graph operations, and a positivity condition on $\Phi$ which is stronger than assuming that $\Phi$ is a state. The main point is to prove the compatibly between traffic independence and the notion of positivity, stated in the following theorem.

\begin{Th}\label{Th:PosFreeProd} The free product of traffic spaces preserves the positivity of traffic spaces, so that the free product of traffic spaces is well-defined as a traffic space. 
\end{Th}

In particular, for any traffic $a$, there exists a traffic space that contains a sequence of traffic independent variables distributed as $a$. Moreover, a traffic space can always be enlarged in order to introduce traffic independent random variables.

\subsubsection{Three canonical models of traffics}

We turn now to our last result, which was the first motivation of this monograph and whose demonstration uses both Theorem~\ref{Th:Matrices} and Theorem~\ref{Th:PosFreeProd}. It states that there exist three different ways of enlarging a $^*$-probability space into a traffic space, each one related to respectively the  tensor, the free and the Boolean independence. Let us be more explicit, starting with the model related to freeness. As explained, Theorem~\ref{Th:Matrices} in its full form gives a formula for the limiting traffic distribution of large unitary invariant random matrices which involves only the limiting non-commutative distribution. Replacing in this formula the limiting non-commutative distribution of matrices by an arbitrary distribution, we obtain a traffic distribution which is related to free independence as the following result highlights.

\begin{Th}\label{MainThfree} Let $(\mcal A,\Phi)$ be a tracial $^*$-probability space. There exists a traffic space $\mcal {B}$ such that :
\begin{enumerate} \item $\mcal A\subset  \mcal {B}$ as $*$-algebras and the trace induced by $\mcal B$ on $\mcal A$ is $\Phi$;
\item two families $\mbf a$ and $\mbf b\in \mcal A\subset  \mcal {B}$ are \emph{freely independent} in $\mcal A$ if and only if they are traffic independent in $\mcal {B}$. 
\end{enumerate}
\end{Th}
The formula for the traffic distribution given, the difficulty consists in proving that this distribution satisfies the positivity condition.

Remark that, as described in~\cite{Male2011} and recalled in Section~\ref{Sec:freetype}, an Abelian non-commutative probability space can be endowed with a structure of traffic space.
\begin{Th}\label{MainThtensor} Let $(\mcal A,\Phi)$ be a Abelian $^*$-probability space. There exists a traffic space $\mcal {B}$ such that :
\begin{enumerate} \item $\mcal A\subset  \mcal {B}$ as $*$-algebras and the trace induced by $\mcal B$ on $\mcal A$ is $\Phi$;
\item two families $\mbf a$ and $\mbf b\in \mcal A\subset  \mcal {B}$ are \emph{tensor independent} in $\mcal A$ if and only if they are traffic independent in $\mcal {B}$. 
\end{enumerate}
\end{Th}

Finally, thanks to Section \ref{Sec:Booleantype}, one can produce an analogue construction for Boolean independence. We recall that any traffic space is endowed with two linear forms: a trace and a second linear form called the \emph{anti-trace}.

\begin{Th}\label{MainThBoolean} Let $(\mcal A,\Psi)$ be a $^*$-probability space. There exists a traffic space $\mcal {B}$ such that :
\begin{enumerate} \item $\mcal A\subset  \mcal {B}$ as $*$-algebras and the anti-trace induced by $\mcal B$ on $\mcal A$ is $\Psi$;
\item two families $\mbf a$ and $\mbf b\in \mcal A\subset  \mcal {B}$ are \emph{Boolean independent} in $\mcal A$ if and only if they are traffic independent in $\mcal {B}$. 
\end{enumerate}
\end{Th}

This construction comes together with a large model for asymptotically Boolean independent random matrices.

In other words, the free product of traffic space leads to the tensor product, Boolean product or the free product of the probability spaces, depending on the way the $*$-distribution and the traffic distribution of our random variables are linked. It corresponds to three different types of traffic that we will define in Section~\ref{Sec:three_types_traffics} : the traffics of free, tensor, or Boolean types. Interestingly, we also see that the last notions of monotone and anti-monotone independence (see \cite{Mu2002,Mu2003}) appear to describe the relations between traffics of different types when they are traffic independent. We sum up the non-commutative independences which follows from traffic independence in Figure~\ref{Fig:Triangle}.

  \begin{figure}[h]
    \begin{center}
     \includegraphics[width=80mm]{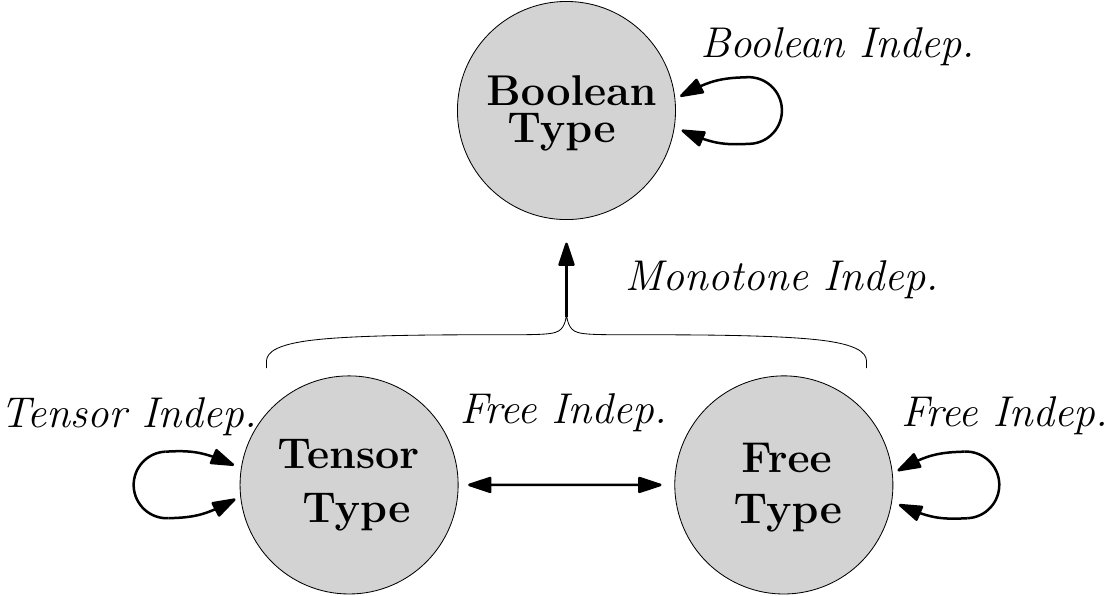}
    \end{center}
    \caption{The non-commutative independences of traffics of free, tensor, and Boolean types which are traffic independent}
    \label{Fig:Triangle}
  \end{figure}

{\bf Organization of the monograph:} In the rest of this introduction, we first recall the definitions of algebraic traffic spaces and traffic independence. 
Part I is dedicated to general facts on traffics. In Section \ref{Sec:NaturalCha} we introduce an equivalent definition of traffic independence. In Section \ref{Sec:PdtTrafSp} we define the free product of traffic spaces and prove Theorem \ref{Th:PosFreeProd}. 
Part II is devoted to particular types of traffics, starting with the so-called unitarily invariant traffics that are introduced and described in Section \ref{Sec:GeneralUnitTraffic} and \ref{Sec:UISCMC}. Theorem \ref{Th:Matrices} on unitarily invariant matrices is proved in Section \ref{Sec:AsymUImatrices}. 
In Section \ref{section:Canonical_construction}, we prove Theorem \ref{MainThfree} on the canonical extension of $^*$-probability spaces via traffics of free type. In Section \ref{Sec:three_types_traffics}, we investigate the canonical extensions of tensor and Boolean type, and prove Theorems \ref{MainThtensor} and \ref{MainThBoolean}.

\subsection{Definitions}\label{Sec:Recall}
   This section provides basic definitions from \cite[Chapter 4]{Male2011} in the theory of traffic spaces.
\subsubsection{Algebras over an operad}\label{Sec:AlgOp}
We first make more precise the definition of graph operations given in the introduction.

\begin{Def}For all $K\geq 0$, a \emph{$K$-graph operation} is a finite, connected and oriented graph with $K$ ordered edges, and two particular
vertices (one input and one output). The set of $K$-graph operations is denoted by $\mcal G_K$, and we define $\mcal G = \bigcup_{K\geq 0} \mcal G_K$.
\end{Def}
A $K$-graph operation can produce a new graph operation from $K$ different graph operations thanks to the following \emph{composition maps}
\begin{align*} 
	\mcal G_K\times \mcal G_{L_1}\times \cdots \times \mcal G_{L_K}&\to \mcal G_{L_1+\cdots +L_K}\\
(g,g_1,\ldots, g_K)&\mapsto g  (g_1,\ldots, g_K),
\end{align*}
for $K\geq 1$ and $L_i\geq 0$, $i=1\etc K$ which consist in replacing the $i$-th edge of $g\in \mcal G_K$ by the $L_i$-graph operation $g_i$ (leading at the end to a $(L_1+\cdots +L_K)$-graph operation). Let also consider the \emph{action of the symmetric group} 
\begin{align*} 
	S_K \times \mcal G_K&\to \mcal G_K\\
(\sigma, g)&\mapsto g^{(\sigma)},
\end{align*}
for $K\geq 2$ which consists in reordering the edges of $g$ according to $\sigma$: if $e_1,\ldots ,e_K$ are the ordered edges of $g$, $e_{\sigma^{-1}(1)},\ldots ,e_{\sigma^{-1}(K)}$ are the ordered edges in $g^{(\sigma)}$. Finally, let us denote by $id$ the graph operation which consists in two vertices and one edge from the input to the output. Endowed with those composition maps and the action of the symmetric groups, the set $\mcal G$ is a \emph{symmetric operad}, in the sense that it satisfies
\begin{enumerate}
\item the \emph{identity} property $g(id,\ldots, id)=g=id(g)$,
\item the \emph{associativity} property
\begin{align*}
& g \,  \big(\, g_1  \, (g_{1,1}, \ldots, g_{1,k_1}), \ldots, \, g_K   (g_{K,1}, \ldots,g_{K,k_K})\big) \\
 &= \big(\, g \,  (g_1, \ldots, g_K)\big)  \, (g_{1,1}, \ldots, g_{1,k_1}, \ldots, g_{K,1}, \ldots, g_{K,k_K}),
\end{align*}
\item the \emph{equivariance} properties
$
(g^{(\sigma)})\, (g_{\sigma^{-1}(1)},\ldots,g_{\sigma^{-1}(K)}) = g \, (g_1,\ldots,g_K); 
$ and $
g\, ( \, g_1^{(\sigma_1)},\ldots, \, g_K^{(\sigma_K)}) = \big(\, g \, (g_1,\ldots,g_K)\big)^{(\sigma_1\times\ldots\times\sigma_K)}.
$
\end{enumerate}
The element $id\in \mcal G_1$ is called the \emph{identity} of the operad.

Let us now define how a $K$-graph operation can produce a new element from $K$ elements of a vector space in a linear way.

\begin{Def}\label{Def:Gaction}An \emph{action} of an operad $\mcal G = \bigcup_{K\geq 0} \mcal G_K$ on a vector space $\mcal A$ is the data, for all $K\geq 0$ and $g\in \mcal G_K$, of a linear map
$Z_g:\mathcal{A}^{\otimes K}\to \mathcal{A}$ such that: $\forall g \in \mcal G_K, g_i \in \mcal G, a_i\in \mcal A, \sigma \in S_K$,
\begin{enumerate}
\item $Z_{id}$ is the identity on $\mcal A$, where $id\in \mcal G_1$ is the identity of the operad,
\item $Z_g  \, (Z_{g_1}\otimes \ldots \otimes Z_{g_K})=Z_{g \,  (g_1,\ldots, g_K)}$,
\item $Z_g(a_1\otimes \ldots\otimes a_K)=Z_{g_\sigma}(a_{\sigma^{-1}(1)}\otimes\ldots\otimes a_{\sigma^{-1}(K)})$.
\end{enumerate}
A vector space on which acts $\mcal G$ is called a \emph{$\mcal G$-algebra}. A $\mcal G$-subalgebra is a vector subspace of a $\mcal G$-algebra stable by the action of $\mcal G$. A $\mcal G$-morphism between two $\mcal G$-algebras $\mcal A$ and $\mcal B$ is a linear map $f:\mcal A \to \mcal B$ such that $f\big( Z_g(a_1\etc a_K)\big)=Z_g\big( f(a_1) \etc f(a_K)\big)$ for any $K$-graph operation $g$ and $a_1\etc a_K \in \mcal A$.
\end{Def}

In the following, $\mcal G$ always denotes the operad of graph operations. We now review some linear maps $Z_g$ of particular interest by describing the underlying graphs $g$. At each time, we shall represent $g$ graphically, forgetting the mention of the ordering of edges when it is not relevant, and assuming the input is the rightmost vertex of the graph and the output the leftmost one when they are not equal.
 
\begin{itemize}
	\item The only element of $\mcal G_0$ is the graph $(\cdot)$ with a single vertex and no edge. By convention, the map $Z_{(\cdot)}$ is a linear map $\mbb C \to \mcal A$. It is then characterized by the image of $1\in \mbb C$ that is denoted by $\mbb I:=Z_{(\cdot)}(1)$ and is called the unit of $\mcal A$. 
	\item By definition, $Z_{\cdot \leftarrow \cdot} = id_{\mcal A}$. The graph $(\cdot \rightarrow \cdot )\in \mcal G_1$, which consists in two vertices and one edge from the output to the input, induces another involution on $\mcal A$  which will be denoted by $a\mapsto a^t:=Z_{\cdot \rightarrow \cdot}(a)$. We call  $a^t$ the transpose of $a$.
	\item  The graph operation $(\cdot \overset{1}{\leftarrow} \cdot \overset{2}{\leftarrow} \cdot)$, which consists in three vertices and two successive edges from the input to the output, induces a bilinear map $(a,b)\in \mathcal{A}^2 \mapsto ab:= Z_{\cdot \overset{1}{\leftarrow} \cdot \overset{2}{\leftarrow} \cdot}(a\otimes b) \in \mathcal{A}$ which gives to $\mathcal{A}$ a structure of associative algebra over $\C$, with unit $\mbb I$. Hence, every $\mcal G$-algebra is in particular a unital algebra.	\item The Hadamard product is the bilinear map $(a,b) \in \mcal A^2 \mapsto a\circ b := Z_{\cdot \leftleftarrows \cdot}(a \otimes b)$, where the graph operation consists in two vertices and two edges from the input to the output. Its defines an associative and commutative product.
	\item The diagonal of an element $a \in \mcal A$ is defined by $\Delta(a):=Z_{\Circlearrowleft}(a)$, for the graph $\Circlearrowleft$ with one vertex and one edge (which is a self loop). The space $\Delta(\mcal A):= \big\{ \Delta(a), \, a\in \mcal A \big\}$ is a commutative $\mcal G$-subalgebra of $\mcal A$.
	\item The degree of an element $a \in \mcal A$ is defined by $deg(a):= Z_{\downarrow}(a)$, for the graph $\downarrow$ with two vertices, where one is both the input and the output, and an edge from the second vertex to the input/output. The map $\deg$ is a projection with image $\Delta(\mcal A)$.
\end{itemize}

\begin{Ex}\label{Ex:MgAlg}

Denote $\mrm{M}_N ( \mbb  C)$ the algebra of $N$ by $N$ complex matrices. 
For any $K\geq 1$ and $g\in \mcal G_K$ with vertex set $V$ and ordered edges $(v_1,w_1) \etc (v_K,w_K)$, let us define $Z_g$ by setting, for all $A_1,\ldots ,A_K\in \mrm{M}_N(\mbb C)$, the $(i,j)$-coefficient of $Z_g(A_1\otimes \ldots \otimes A_K)$ as
$$\left[Z_g(A_1\otimes \ldots \otimes A_K)\right] (i, j):= \sum_{
		\substack {
		\phi: V \to [N] \\ \phi(in) = j,\, \phi(out) = i}} \prod_{k=1}^K
		  A_k \big( \phi(w_k), \phi(v_k) \big) .$$
This defines an action of the operad $\mcal G$ on $\mrm{M}_N(\mbb C)$. The product $AB= Z_{\cdot \overset{1}{\leftarrow} \cdot \overset{2}{\leftarrow} \cdot}(A\otimes B)$ induced by this action coincides with the classical product of matrices. The Hadamard product $ A\circ B = Z_{\cdot \leftleftarrows \cdot}(A\otimes B)$ is the entry-wise product of matrices $\big(A(i,j) B(i,j) \big)_{i,j=1}^N$. The diagonal of a matrix $ \Delta(A):= Z_{\Circlearrowleft} (A)$ and the transpose $ A^t= Z_{\cdot \rightarrow \cdot}(A)$ are the diagonal $\big(\delta_{ij}A(i,i)\big)_{i,j=1}^N$ and the transpose $\big(A(j,i) \big)_{i,j=1}^N$ in the usual sense. The degree $deg(A): = Z_{\downarrow}(A)$ is the row sum diagonal matrix $  \big(\delta_{ij} \sum_{k}A(i,k) \big)_{i,j=1}^N$. For more information about the traffic distribution of matrices, see \cite[Section 1.2.]{Male2011}.
\end{Ex}

\begin{Ex}\label{Ex:MgAlgGraph} Let $\mcal V$ be an infinite set and let $\trm{M}_{\mcal V}(\mbb C)$ denotes the set of complex matrices $A=\big(A(v,w)\big)_{v,w\in \mcal V}$ indexed by $\mcal V$ (of possible infinite size) such that each row and column have a finite number of nonzero entries. For any $g\in \mcal G$ and $A_1,\ldots ,A_K\in \trm{M}_{\mcal V}(\mbb C)$, we define $Z_g(A_1\otimes \ldots \otimes A_K)$ by the same formula as in Example \ref{Ex:MgAlg} with summation now over the maps $\phi: V \to \mcal V$. This defines as well a structure of $\mcal G$-algebra for $\trm{M}_{\mcal V}(\mbb C)$. When the entries of the matrices are non negative integers, they encode the adjacency operator of a locally finite directed graph: the graph associated to a matrix $A$ has $A(v,w)$ edges from a vertex $v\in V$ to a vertex $w\in V$.
\end{Ex}

The graph operations can be equivalently encoded in terms of analogues of polynomials, turning the linearity on $\mcal A^{\otimes K}$ into $K$-linearity on $\mcal A$, $K\geq 2$. We also define now a notion with no input and output for the purpose of the next section, and later we will consider a generalization with arbitrary numbers of in/outputs.

\begin{Def}\label{Def:nGraphPol} Let $J$ be a labelling set.
\begin{itemize}
	\item A test graph labeled in $J$ is a collection $T = (V,E,\gamma)$, where $(V,E)$ is a finite, connected and oriented graph and $\gamma:E\to J$ is a labeling of the edges by indices. 
	\item A graph monomial labeled in $J$ is a collection $g = (V,E,\gamma,\mbf v)$, where $T=(V,E,\gamma)$ is a test graph and $\mbf v=(in,out)$ is an ordered pair of vertices of $T$, considered respectively as the input and the output of $T$. 
\end{itemize}
We denote by $\mcal T\langle J \rangle$ the set of test graphs labeled in $J$, and by $\mathcal{G}\langle J \rangle$ the set of graph monomials labeled in $J$. We denote by $\mbb C \mcal T\langle J \rangle$ and $\mbb C \mcal G\langle J \rangle$ the vector spaces generated by elements of the respective sets.
\end{Def}

The labelling map $\gamma$ of a graph monomial is not a bijection in general, so that a same variable can appear on several edges of the graph.

Let us consider a family $\mbf a = (a_j)_{j\in J} \in \mcal A^J$ of elements of a $\mcal G$-algebra, and consider a graph monomial $t=(V,E,\gamma)$ with labels in $J$. Let us list arbitrarily the edges $E=\{e_1,\ldots, e_K\}$ and denote by $g$ the $K$-graph operation $(V,E)$ with the ordered edges $e_1,\ldots, e_K$. We set $t(\mbf a)=Z_g\big(a_{\gamma(e_1)}\otimes \cdots \otimes a_{\gamma(e_K)}\big)$, which does not depend on the choice of the ordering of $e_1,\ldots, e_K$, thanks to the equivariance property. For more details about graph polynomials, see \cite[Section 4.2.2.]{Male2011}

\subsubsection{Algebraic traffic spaces}

\begin{Def}\label{Def:Traffic}An \emph{algebraic traffic space} is a couple $(\mcal A, \tau)$ where $\mcal A$ is a $\mcal G$-algebra and $\tau:\mbb C \mcal T\langle \mcal A \rangle\to \C$ is a linear functional, called the \emph{combinatorial trace}, defined on the space of test graphs labeled in $\mcal A$, satisfying
\begin{itemize}
\item the \emph{unity} property $\tau\big[  (\cdot ) \big]=1$ for $(\cdot)$ the graph with a single vertex and no edge,
\item the \emph{multi-linearity w.r.t. the edges} $\tau[T_{a+\lambda b}] = \tau[T_a] + \lambda \tau[T_b]$, for any test graph $T_{a+\lambda b} \in \mcal T\langle \mcal A \rangle$ having an edge $e_0$ with label $a+ \lambda b$, where $a,b\in \mcal A$ and $\lambda \in \mbb C$, and for $T_a$ and $T_b$ defined as $T$ with label $a$ and $b$ respectively for the edge $e_0$,
\item the \emph{substitution} property $\tau[T] = \tau[T_g]$ for any test graph $T \in \mcal T\langle \mcal A \rangle$ having an edge $e_0$ with label $g(\mbf a)$, where $g$ is a graph monomial and $\mbf a$ a family of elements of $\mcal A$, and $T_g$ obtained from $T$ by replacing the edge $e_0$ by the graph $g$ whose edges are labelled by the element of $\mbf a$.
\end{itemize}
An element of an algebraic traffic space is called a traffic. A homomorphism between two algebraic traffic spaces $(\mcal A,\tau)$ and $(\mcal A', \tau')$ is a $\mcal G$-morphism $f:\mcal A\to \mcal A'$ such that $\tau'\big[T( f(\mbf a))\big] = \tau\big[T(\mbf a)\big]$, for any $T \in \mcal T\langle J\rangle$ and $\mbf a =(a_j)_{j\in J}\in \mcal A^J$, where $f(\mbf a):= \big(f(a_j)\big)_{j\in J}$.
\end{Def}The map $\tau$ takes as entry a test graph whose edges are labeled by elements of $\mcal A$ and produces a complex number from. There is no meaning in the expression $\tau[a]$ for an element $a \in \mcal A$. 

In particular, $(\mcal A, \tau)$ is not an algebraic non-commutative probability spaces. It can always be endowed with two different structures of algebraic non-commutative probability spaces.

\begin{Def}\label{Def:TraceAndAntiTr} Let $(\mathcal{A},\tau)$  be an algebraic traffic space. The \emph{trace} $\Phi:\mathcal{A}\to \mathbb{C}$ and the anti-trace $\Psi:\mcal A\to \mathbb{C}$  are the linear maps given by the application of $\tau$ on a self loop and on a simple edge, namely
	$$\Phi: a \mapsto \tau\big[ \Circlearrowleft^a \big], \ \ \Psi: a \mapsto \tau\big[ \cdot \overset{a}\leftarrow \cdot \big].$$
\end{Def}

Recall that the product of two elements $a,b\in \mcal A $ is defined by $ab:=Z_{\cdot \overset{1}{\leftarrow} \cdot \overset{2}{\leftarrow} \cdot}(a \otimes b)$, and that endowed with this product $\mathcal{A}$ is an associative algebra. Then $(\mcal A, \Phi)$ and $(\mcal A, \Psi)$ are two algebraic non-commutative probability spaces. The map $\Phi$ is tracial in the sense that $\Phi(ab) = \Phi(ba)$ for any $a,b\in \mcal A$, and it satisfies $\Phi\big( \Delta( a ) \big) = \Phi(a)$ for any $a\in \mcal A$. Properties relating the different functionals $\tau$, $\Phi$ and $\Psi$ are explained in \cite[Section 4.2.4.]{Male2011}

In the following definition, for a test graph $T$ of $\mcal T \langle J \rangle$ and a family $\mbf a \in \mcal A^J$ of elements of a set $\mcal A$, we denote $T(\mbf a)\in\mcal T \langle \mcal A \rangle$ the test graph obtained by replacing labels $j\in J$ of the edges of $T$ by $a_j$. This definition is extended for $T \in \mbb C \mcal T \langle J\rangle$ by linearity. 

\begin{Def}
Let $(\mathcal{A},\tau)$ and $(\mathcal{A}_N,\tau_N), N\geq 1$, be algebraic traffic spaces, and $J$ be an index set.
	\begin{enumerate}
		\item  The \emph{traffic distribution} of a family $\mbf a=(a_j)_{j\in J}$ of elements in $\mathcal{A}$
 is the linear map $\tau_{\mbf a}: T\in \mbb C \mcal T\langle J  \rangle \mapsto \tau\big[ T(\mbf a)\big] \in  \mathbb{C} $. 
		\item A sequence of families $\mbf a_N\in \mcal A_N^J$ \emph{converges in traffic distribution} to $\mbf a$ if the traffic distribution of $\mbf a_N$ converges pointwise to the traffic distribution of $\mbf a$ on $\mbb C \mcal T\langle J \rangle$.
		\end{enumerate}\label{Def:DistrTraff}
\end{Def}

\begin{Ex}(Example \ref{Ex:MgAlg} continued)\label{Ex:MtSpa}
Let $(\Omega, \mcal F, \mbb P)$ be a probability space in the classical sense and let us consider the algebra $\mrm{M}_N\big(L^{\infty-}(\Omega, \mbb C )\big)$ of matrices whose coefficients are random variables with finite moments of all orders. Endowed with the action of the operad $\mathcal G$ described in Example \ref{Ex:MgAlg}, it is a $\mathcal G$-algebra, and it becomes an algebraic traffic space endowed with the combinatorial trace $\tau_N$ given by: for any test graph $T=(V,E,M)$ labeled in $\mrm{M}_N\big(L^{\infty-}(\Omega, \mbb C )\big)$, where $M:E\to \mrm{M}_N\big(L^{\infty-}(\Omega, \mbb C )\big)$,
	\eqa\label{eq:IntroDefTrace}
		 \tau_N \big[ T \big] =  \esp\Big[ \frac 1 N  \sum_{ \phi: V \to [N]}  \prod_{e=(v,w)\in E} \big(M(e) \big)  \big( \phi(w), \phi(v) \big) \Big].
	\qea
 The trace associated to $\tau_N$ is the usual normalized trace $\Phi_N: A \mapsto \esp\big[ \Tr A] \big]/N$ and the anti-trace is the map $\Psi_N:A\mapsto \esp\big[\sum_{i,j} A(i,j)/N\big]$. 
\end{Ex}

\begin{Ex}(Example \ref{Ex:MgAlgGraph} continued)\label{Ex:MtSpaGraph}
 Let $\mcal V$ be an infinite set. A locally finite rooted graph on $\mcal V$ is a pair $(G,\rho)$ where $G$ is a directed graph such that each vertex has a finite number of neighbors (or equivalently an element of the space $\trm{M}_{\mcal V}(\mbb C)$ of Example \ref{Ex:MgAlgGraph} with integers entries) and $\rho$ is an element of $\mcal V$. Recall briefly that the so-called weak local topology is induced by the sets of $(G,\rho)$ such that the subgraph induced by vertices at fixed distance of the root is given, see for instance \cite{BeCu12}. The notion of locally finite random rooted graphs refers to the Borel $\sigma$-algebra given by this topology. Let $(\Omega, \mcal F, \mbb P)$ be a probability space, let $\mcal V$ be a set and let $\rho\in \mcal V$. Let $\mbf G$ be a family of locally finite random rooted graphs on $\Omega$ with vertex set $\mcal V$ and common root $\rho$. Consider the $\mcal G$-subalgebra $\mcal A$ of $\trm{M}_{\mcal V}(\mbb C)$ induced by the adjacency matrices of $\mbf G$. For any test graph $T=(V,E,M)$ labeled in $\mcal A$ and any root $r\in V$ of $T$, denote
	\eqa
		 \tau_\rho \big[ (T,r) \big] =  \esp\Big[   \sum_{ \substack{ \phi: V \to \mcal V\\ \phi(r) = \rho}}  \prod_{e=(v,w)\in E}  \big(M(e) \big)\big(\phi(w), \phi(v)\big) \Big].
	\qea
We assume that all the above quantities exist, which is true for instance if the degree of the vertices of the graphs $\mbf G$ are bounded by a deterministic constant. If moreover the random graph is \emph{unimodular} \cite[Section 2.2]{BeCu12}, then $\tau$ is independent of the root of $T$, and $(\mcal A, \tau)$ is an algebraic traffic space (by applying \cite[Equation (2.3)]{BeCu12} to graph operations). This covers the case of random groups with given generator $(\Gamma, \gamma_1 \etc \gamma_n)$ which is identified with the Cayley graph of $\Gamma$ generated by $(\gamma_1\etc \gamma_n)$. 
\end{Ex}

\subsubsection{M\"obius inversion and injective trace}\label{Sec:Mobius}

In order to define traffic independence, we need first to define a transform of combinatorial traffic traces. It is based on a general principle that is used several times in this monograph. Recall that a poset is a set $\mcal X$ with a partial order $\leq$ (see \cite[Lecture 10]{NS} and \cite[Section 3.7]{Stan12}). Moreover $\mcal X$ is a lattice whenever every two elements have a unique supremum and a unique infimum. If $\mcal X$ is a finite lattice, then there exists a map $\mrm{Mob_{\mcal X}}:\mcal X\times \mcal X\to \mbb C$, called the M\"obius function on $\mcal X$, such that for two functions $F,G:\mcal X \to \mbb C$ the statement that 
	$$F(x) = \sum_{ x'\geq x} G(x'),  \ \forall x\in \mcal X$$
is equivalent to
	$$G(x) = \sum_{ x' \geq x} \mrm{Mob_{\mcal X}}(x,x') F(x'),  \ \forall x\in \mcal X.$$
Hence the first formula implicitly defines the function $G$ in terms of $F$.

For any set $X$, denote by $\mcal P(X)$ the poset of partitions of $X$ equipped with refinement order, that is $\pi\leq \pi'$ if the blocks of $\pi$ are included in blocks of $\pi'$. Let $(\mcal A, \Phi)$ be a non-commutative probability space and denote by $\mrm{N.C.}(K) \subset \mcal P(\{1\etc K\})$ the poset of non-crossing partitions of $\{1\etc K\}$ \cite[Lecture 9]{NS}. We recall that in an algebraic non-commutative probability space $(\mcal A, \Phi)$, the free cumulants are the multi-linear maps $(\kappa)_{L\geq 1}$ on $\mcal A^L$ given implicitly by 
	\eqa \label{Def:FreeCumm}
	 \Phi( a_1 \times \dots \times a_K) = \sum_{\pi \in \mrm{N.C.}(K)} \underbrace{\prod_{ \{i_1 < \dots < i_L\} \in \pi} \kappa_L(a_{i_1}\etc a_{i_L})}_{=:\kappa(\pi)}.
	\qea
With $\Phi(\pi)$ defined as $\kappa(\pi)$ using $\Phi(a_{i_1}\dots  a_{i_L})$ instead of $\kappa_L(a_{i_1}\etc a_{i_L})$, we can express $\kappa(\pi)$ in terms of $\Phi(\pi')$ for $\pi'\geq \pi$ thanks to M\"obius inversion in the poset on non crossing partitions. 

Let now $T=(V,E,\gamma)$ be a test graph in $\mcal T\langle \mathcal{A} \rangle$, with vertex set $V$. For any partition $\pi\in \mcal P(V)$ of $V$, we denote by $T^\pi=(V^\pi,E^\pi,\gamma^\pi)$ the test graph obtained by identifying vertices in a same block of $\pi$. More precisely:
\begin{itemize}
	\item  the vertex set of $T_\pi$ is the set of blocks of $\pi$,
	\item each edge $e=(v,w)$ of $T$ generates an edge $e^\pi =(B_v,B_w)$, where $B_v$ denotes the block of $\pi$ containing $v$,
	\item the label of $e^\pi$ is the label of $e$, namely $\gamma^\pi(e^\pi)=\gamma(e)$. 
\end{itemize}
We say that $T^\pi$ is a \emph{quotient} of $T$. Denote $0_V$ the partition of $V$ with singletons only (it then satisfies $T^{0_V} = T$).

 \begin{Def}\label{Def:TraffCum}Let $\mcal A$ be an ensemble and let $\tau : \mbb C \mcal T\langle \mathcal{A} \rangle \to \mbb C$ be a linear form. We define the injective version of $\tau$, and denote $\tau^0$, the linear form on $\mbb C \mcal T\langle \mathcal{A} \rangle$ implicitly given by the following formula: for any test graph $T\in \mcal T\langle \mathcal{A} \rangle$
 	\eqa\label{eq:TraffCum}
		\tau\big[ T\big] = \sum_{\pi \in \mcal P(V)} \tau^0 \big[ T^\pi\big],
	\qea
in such a way for any test graph $T$ one has
	$$ \tau^0 \big[ T\big] = \sum_{\pi \in \mcal P(V)} \mrm{Mob}_{\mcal P(V)}(0_V, \pi ) \cdot \tau\big[ T^\pi\big].$$
The injective version of a combinatorial trace (resp. a traffic distribution) is called the \emph{injective trace} (resp. the \emph{injective distribution}).
 \end{Def}

\begin{Ex}	 The injective version $ \Tr^0$ of the trace of test graph in random matrices  of $M_N(\C)$ defined in \eqref{eq:IntroDefTrace} is given, for $T=(V,E,M)$ a test graph labeled in $\mrm{M}_N\big(L^{\infty-}(\Omega, \mbb C )\big)$, by
\eqa\label{eq:IntroDefTraceInj}
	 \tau_N^0 \big[ T \big] =  \esp\Big[ \frac 1 N  \sum_{ \substack{ \phi: V \to [N]\\ \trm{injective}}}  \prod_{e=(w,v)\in E}  \big(M(e)\big) \big( \phi(w), \phi(v) \big) \Big].
	\qea
Limiting injective combinatorial distributions of usual matrix models (unitary Haar matrices, uniform permutation matrices, certain Wigner matrices) are proved to exist \cite[Chapter 3]{Male2011} and are shown to have simple and natural expressions.
	\end{Ex}
\begin{Rk} The map $\tau^0$ satisfies the property of multi-linearity w.r.t. the edges of Definition \ref{Def:Traffic}, but not the substitution property, see \cite[Section 2.1.]{Male2011}.
\end{Rk}

\subsubsection{Traffic independence}\label{Sec:DefFree}

Let $J$ be a fixed index set and, for each $j\in J$, let $\mcal A_j$ be some sets. Given a family of linear maps $\tau_j : \mathbb{C}\mcal T\langle \mcal A_j \rangle \to \mbb C, $ $j\in J$,  sending the graph with no edge to one, we shall define a linear map denoted $\star_{j\in J}\tau_j: \mathbb{C} \mcal T\langle \, \bigsqcup_{j\in J} \mcal A_j  \rangle$ with the same property and called  the free product of the $\tau_j$'s. The terminology \emph{free} product should be understood as \emph{canonical} product, and may not be confused with the terminology \emph{free} independence. Therein, $\bigsqcup_{j\in J}  \mcal A_j $ denotes the disjoint union of copies of $\mcal A_j$, although the sets $\mcal A_j$ can originally intersect or be equal: it is formally defined as the set of all couples $(j,a)$ where $j\in J$ and $a\in \mcal A_j$.

Let  us consider a test graph $T$ in $ \mathcal T\langle \, \bigsqcup_{j\in J}\mcal A_j \rangle$ and introduce an undirected graph as follow. We first call colored components of $T$ with respect to the families $(\mcal A_j)_{j\in J}$ the maximal nontrivial connected subgraphs of $T$ whose edges are labelled by elements of $\mcal A_j$ for some $j\in J$ (they are elements of $\mcal T\langle \mcal A_j \rangle$). There is no confusion about the definition of colored components because of the convention for $\bigsqcup_{j\in J}\mathcal{A}_j $. When there is no ambiguity about the collection $(\mcal A_j)_{j\in J}$, we denote by $\mcal C \mcal C(T)$ the set of colored components of $T$.  We call connectors of  $T$ the vertices of $T$ belonging to at least two different colored components. The graph $\mcal G\mcal C\mcal C( T)$ defined below is called graph of colored components of $T$ with respect to $(\mathcal{A}_j)_{j\in J}$:
\begin{itemize}
	\item the vertices of $\mcal G\mcal C\mcal C( T)$ are the colored components of $T$ and its connectors;
	\item there is an edge between a colored component in $\mcal C\mcal C( T)$  and a connector if the connector belongs to the component.
\end{itemize}
The following definition is from \cite[Section 2.2.]{Male2011}.

\begin{Def} \label{Def:Freeness}
	\begin{enumerate}
		\item For each $j\in J$, let  $\mcal A_j$ be a set and $\tau_j : \mbb C \mcal T\langle \mathcal{A}_j \rangle \to \mbb C$ be a linear map sending the test graph with no edges to one. The free product of the maps $\tau_j$ is the linear map $\star_{j\in J}\tau_j:  \mathbb{C}\mcal T\langle \, \bigsqcup_j \mcal A_j \rangle \to \mbb C$ whose injective version is given by: for any test graph $T$,
	\eqa
		(\star_{j\in J}\tau_j)^0 [T ] = \one\big( \mcal G \mcal C \mcal C(T)  \mrm{ \ is  \ a  \ tree} \big) \times   \prod_{ S  \in \mcal C \mcal C(T) } \ \tau_{j(S)}^0\big[S\big],
	\qea
where $j(S)$ is the index of the labels of $S$.
	\item  Let $(\mcal A, \tau)$ be an algebraic traffic space and let $J$ be a fixed index set. For each $j\in J$, let $\mcal A_j \subset \mcal A$ be a $\mcal G$-subalgebra. The subalgebras $(\mcal A_j)_{j\in J}$ are called traffic independent whenever the restriction of $\tau$ on the test graphs labeled by elements of $\mcal A_j, j\in J$, coincides with $\star_{j\in J}\tau_j$.
	\item Let $X_j, j\in J$ be subsets of $\mcal A$ and let $(\mbf a_j)_{j\in J}$ be a family of elements of $\mcal A$. Then $(X_j)_{j\in J}$ (resp. $(\mbf a_j)_{j\in J}$) are called traffic independent whenever the $\mcal G$-subalgebra induced by the $X_j$'s (resp. by the $\mbf a_j$'s) are traffic independent.  	
\end{enumerate}
\end{Def}

The motivation for introducing this definition is, in the context of large matrices, Example \ref{Ex:MtSpa}, the asymptotic traffic independence for permutation invariant matrices, see \cite[Theorem 1.8]{Male2011}.

We end this section by the following elementary property of traffic independence.
\begin{Lem}\label{Lem:DefTraffInd} Traffic independence is symmetric and associative, i.e. $\mcal A_1$ and $\mcal A_2$ are independent if and only if $\mcal A_2$ and $\mcal A_1$ are independent, and $\mcal A_j,j=1,2,3$ are independent if an only if $\mcal A_1$ and $(\mcal A_2,\mcal A_3)$ are independent and $\mcal A_2$ and $\mcal A_3$ are independent.
\end{Lem}

\newpage
\part{General traffic spaces}

\section*{Presentation of Part 1}

According to Section \ref{Sec:Recall}, traffic independence in an algebraic traffic space $(\mcal A, \tau)$  is defined in terms of the injective version $\tau^0$ of $\tau$, thanks to the formula involving the graph of colored components. Such a definition of independence is unusual  in non-commutative probability, where the injective trace has no analogue. As a comparison, let us remind the two equivalent definitions of free independence in free probability. It is usually defined by a relation of moments, namely the centering of alternated products of centered elements.  The second usual characterization of free independence is the vanishing of mixed free cumulants. 

We propose in Theorem \ref{Equivalence Free product free independence} of Section \ref{Sec:NaturalCha} a characterization of traffic independence in terms of moment functions as the centering of some \emph{generalized alternated products of reduced elements}, in an appropriate sense that we shall make precise. Note that Gabriel proposes in \cite{Gabriel2015a} a definition of traffic cumulants, and traffic independence is the vanishing of these mixed traffic cumulants. 

In Section \ref{Sec:PdtTrafSp}, we construct the product of traffic spaces: given for each $j\in J$ an algebraic traffic space $(\mcal A_j, \tau_j)$, we construct a new algebraic traffic space $(\mcal A, \tau)$ that contains the $\mcal A_j$ as independent $\mcal G$-subalgebras. The space $\mcal A$ will be made with equivalent classes of graph operations with an input and output whose edges are labelled by the $\mcal A_j$. The combinatorial trace $\tau$ will be the extension to $\mcal A$ of the free product of the combinatorial traces $\tau_j$, $j\in J$. 

Positivity of state is another important notion in noncommutative probability. We propose a definition of positivity for combinatorial trace in Section \ref{Sec:PosFreeProd}. We prove that the free product traffic spaces with positive traces also admits  a positive trace.

\section{A natural characterization of traffic independence}\label{Sec:NaturalCha}

\subsection{Statement}

In order to give the characterization of traffic independence which is the analogue of the usual presentation of freeness, we need a generalization of test graphs and graph polynomials with arbitrary numbers of marked vertices. To explain this fact, recall that the definition of traffic independence involves the graph of colored components. To define correctly the operation which consists in reconstructing a test graph from its colored components and its graph of colored components, we need formal objects that are specified in the two following definitions (see Figure \ref{Fig:17}).

  \begin{figure}[h]
    \begin{center}
     \includegraphics{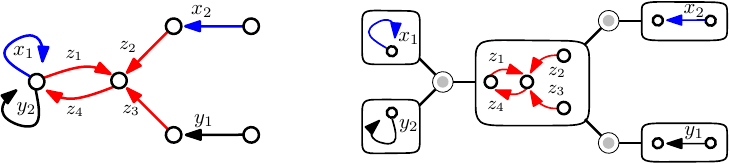}
    \end{center}
    \caption{Left: a test graph $T$ in three families of traffics $(x_1,x_2)$, $(y_1,y_2)$ and $(z_1,z_2,z_3,z_4)$. Note that $\tau[T] = \Phi[ \Delta(x_1)\Delta(y_2)\,  (z_4\circ z_1^t) \, \mrm{deg}(z_2\, x_2)\mrm{deg}(z_3\, y_1)\, z_2\, \Delta(y_1)\big]$. Right: the graph of colored component $\mcal{GCC}(T)$.}
    \label{Fig:17}
  \end{figure}

\begin{Def} A graph monomial of rank $n\geq 1$ (in short a $n$-graph monomial) labeled in $J$ is the data $g=(V,E,\gamma, \mbf v)$ of a test graph $T=(V,E,\gamma)$ and of a $n$-tuple $\mbf v=(v_1\etc v_n)$ of vertices of $T$, called the \emph{outputs}. We denote by $\mcal G^{(n)}\langle J \rangle $ the set of $n$-graph monomials and by $\mbb C \mcal G^{(n)}\langle J \rangle $ the space of $n$-graph polynomials.
\end{Def}

We have $\mbb C \mcal G^{(2)}\langle J \rangle = \mbb C \mcal G\langle J \rangle$ where a graph monomial of rank 2 is identified with the graph monomial whose input is the first output. A test graph is also called a $0$-graph monomial and we set $\mbb C\mcal G^{(0)}\langle J \rangle  := \mbb C \mcal T\langle J \rangle $.
To define generalized products of graph polynomials of arbitrary rank, we use the following objects, drawn in Figure \ref{Fig:08}.

\begin{Def} A \emph{bigraph operation of rank $n\geq 1$} (in short a $n$-bigraph operation) in $L\geq 0$ variables is the data of 
\begin{itemize}
	\item a finite, connected, undirected and bipartite graph $g$, endowed with a  bipartition of its vertices into two sets $V_{\mathit{in}}(g)$ and $V_{\mathit{co}}(g)$, whose elements are called \emph{inputs} and \emph{connectors},
	\item with exactly $L$ ordered inputs, given together with an ordering of its edges around each input 
	\item and the data of an ordered subset $V_{\mathit{out}}(g)$ consisting in $n$ elements of the connectors $V_{\mathit{co}}(g)$ that we call outputs,
\end{itemize}
and such that all connectors that are not an output have degree greater than or equal to $2$. We denote by $\mcal B^{(n)}$ the set of $n$-bigraph operations. For any  $L,n\geq 0$ and any tuple $\mbf d=(d_1\etc d_L)\in (\N^*)^L,$ we denote by $\mcal B_{L,\mbf d}^{(n)}$ if $L\neq0$ and by $\mcal B_0^{(n)}$ otherwise the set of $n$-bigraph operations with $L$ inputs such that the $\ell$-th one has degree $d_\ell.$ \end{Def}

A $n$-bigraph operation in $L$ variables with degrees $d_1,\ldots, d_L$ has to be thought as an operation that accepts $L$ objects with ranks $d_1,\ldots, d_L,$ and produces a new object of rank $n$. The set of bi-graph operations is actually an operad, although we do not use this fact (see Section \ref{Sec:ConcluI} for comments). 

In particular, a $n$-bigraph operation can produce a new $n$-graph monomial from $L$ different graph monomials in the following way, see Figure \ref{Fig:08}. Let us consider $L$ graph monomials $t_1,\ldots, t_L$ labeled on some set $\mcal A$, with respective number of outputs given by $\mbf d\in (\N^*)^L$ (that is $t_\ell \in  \mcal G^{(d_\ell)}\larac$),  and a bigraph operation  $g\in \mcal B_{L,\mbf d}^{(n)}.$   Replacing the $\ell$-th input of $g$ and its adjacent ordered edges $(e_1,\ldots,e_{d_{\ell}})$ by the graph of $t_\ell$, identifying for each  $k\in[L]$ the  connector attached to $e_k$ with the $k$-th output of $t_\ell$, yields a connected graph. We  denote   by $T_g(t_1\otimes \ldots \otimes t_L)\in  \mcal G^{(n)}\larac$ the $n$-graph monomial whose labelling is induced by those of $t_1,\ldots, t_L$, and with outputs given by the outputs of $g$.   We then define by linear extension 
\begin{align*}
T_g: \mbb C\mcal G^{(d_1)}\larac \otimes \dots \otimes \mbb C\mcal G^{(d_L)}\larac &\longrightarrow \mbb C\mcal G^{(n)}\larac \\ 
t_1\otimes\ldots\otimes t_L&\longmapsto T_g(t_1\otimes\ldots\otimes t_L).
\end{align*}

  \begin{figure}
    \begin{center}
     \includegraphics{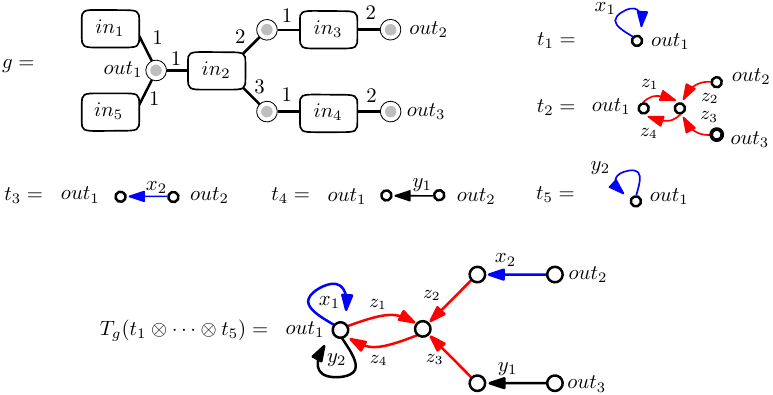}
    \end{center}
    \caption{A bigraph operation $g$ of order 3 with 5 inputs and 3 outputs with degree sequence $(1,3,2,2, 1 )$; the numbers in the figure describe the order of the edges around each input. Five graph operations $t_1\etc t_5$ which satisfy that $t_1\otimes \cdots \otimes t_5$ is $g$-alternated. The graph operation $T_g(t_1\otimes \cdots \otimes t_5)$}
    \label{Fig:08}
  \end{figure}

\begin{Ex} \label{Rk:BigraphOp}
\begin{itemize} 
	\item Let $\mbf A_N = (A_j)_{j\in J}$ be a family of matrices and $t=(V,E, \gamma, \mbf v)$ be a $n$-graph monomial labeled in $J$. We define a random tensor matrix $t(\mbf A_N) \in (\mbb C^N)^{\otimes n}$ as follows. Denoting by $\mbf v=(v_1\etc v_n)$ the sequence of outputs of $t$ and by $(\xi_i)_{i=1\etc N}$ the canonical basis of $\mbb C^N$, we set,
	\eqa\label{Rk:BigraphOp1}
		t(\mbf A_N) =\sum_{ \phi: V \to [N]}  \prod_{e=(v,w)\in E} A_{\gamma(e)}\big( \phi(w), \phi(v) \big) \xi_{\phi(v_1)} \otimes \dots \otimes \xi_{\phi(v_n)}.
	\qea
	
	\item More generally, let $g$ be a $n$-bigraph operation with $K$ inputs and $A_1 \etc A_K$ be tensors matrices such that the rank $n_k$ of $A_k$ (so that $A_k\in(\mbb C^N)^{\otimes n_k}$) is the degree of the $k$-th input of $g$. Denote by $(v_1\etc v_n)$ the outputs of $g$ and for each $k=1\etc K$ denote by $(w_1^k \etc w_{n_k}^k)$ the ordered neighborhood connectors of the $k$-th input. Then we define a element of $(\mbb C^N)^n$ by
	\eqa\label{Rk:BigraphOp2}
		T_g(\mbf A_N) = \sum_{\phi: V_{\mathit{co}}(g)  \to [N]} \prod_{ k=1}^K A_k\big( \phi(w_1^k) \etc \phi(w_{n_k}^k)  \big) \xi_{\phi(v_1)} \otimes \dots \otimes \xi_{\phi(v_n)}.
	\qea
\end{itemize}
\end{Ex}

\begin{Def} Let $J$ be an index set and $(\mcal A_j)_{j\in J}$  be  a family of ensembles, and let $g\in \mcal B_{L,\mbf d}^{(n)}$ be a bigraph operation with $\mbf d=(d_1,\ldots,d_L)$. A tensor product $(t_1 \otimes \dots \otimes t_{L'})$ of graph polynomials labeled by $\bigsqcup_j \mcal A_j$ is alternated along $g$ (in short \emph{$g$-alternated}) whenever
\begin{enumerate}
	\item  $L'=L$, 
	\item $t_i\in \C\mcal G^{(d_i)} \langle \mcal A_{j_i}\rangle$ for each $i=1\etc L$, and
	\item for all $p,q\in [L]$ such that the $p$-th and  the $q$-th inputs are neighbors of a same connector,  then $j_p\neq j_q.$
\end{enumerate}
\end{Def}

Let $T_g$ be a bigraph operation and let $m_1 \otimes \dots \otimes m_L$ be a tensor product of graph monomials, labeled in a set $\bigsqcup_j \mcal A_j, j \in J$, alternated along $g\in \mcal B_{L,\mbf d}^{(0)}$. Assume that $T_g$ does not identify any pair of outputs of each $m_\ell$ and that the output vertices of each $m_\ell$ are pairwise distinct. Then $T_{g}(m_1\otimes \ldots \otimes m_L)$ is a test graph with graph of colored components $g,$ and its colored components are  $m_1,\ldots,m_L,$ (considered as graphs with no outputs). Reciprocally, the graph of colored component gives a decomposition of any test graph as an element of the form $T_{g}(m_1\otimes \ldots \otimes m_L)$. This decomposition is unique up to the symmetry of a certain automorphism group introduced later in Section \ref{Sec:PosFreeProd}.

We shall now define a notion of \emph{reduced} $n$-graph polynomials. For any $n\ge 2,$ any partition $\pi\in \mcal P(n)$ of $\{1\etc n\}$, and any $n$-graph monomial $g$ with outputs $(v_1,\ldots, v_n)$, let us denote by $g^\pi$  the quotient graph obtained by identifying vertices $v_1,\ldots, v_n$ that belong to a same block of $\pi$, with outputs given by the images of $(v_1,\ldots,v_n)$ by the quotient map, so that the edges of $g^\pi$ can be identified with the one of $g.$ This defines a linear map $\Delta_\pi: \mbb C\mcal G^{(n)} \larac\to \mbb C\mcal G^{(n)} \larac$ such that $\Delta_\pi (g) = g^\pi$ for $n$-graph monomials $g$. The map $\Delta_\pi$ can also be seen as the action of a bigraph operation (see an example in Figure \ref{Fig:14}). Denote respectively by $0_n$ and $1_n$ the partitions of $\{1\etc n\}$ made of $n$ singletons and of one single block respectively. Note that $\Delta_{0_n}(g) = g$ for any $g\in \mbb C \mcal G^{(n)}\larac,$

  \begin{figure}
    \begin{center}
     \includegraphics{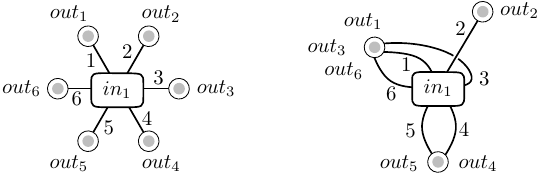}
    \end{center}
    \caption{The bi-graph operations $\Delta_{ 0_6}$ (left) and $\Delta_{\left \{ \{1,3,6\} , \{2\} , \{4,5\} \right \}}$ (right). }
    \label{Fig:14}
  \end{figure}

\begin{Def} Let $\mcal A$ be an ensemble and $\tau:\C\mcal T \larac\to\C$ be a linear form. We extend $\tau$ in a linear map $\C\mcal T \larac\oplus\C\mcal G^{(1)} \larac\to\C$ by forgetting the position of the output in 1-graph monomials. A $n$-graph polynomial $t\in \C\mcal G^{(n)}\larac$ is called \emph{reduced} with respect to $\tau$, if
\begin{itemize}
	\item $n\in \{0,1\}$ and $\tau(t)=0$, or
	\item $n\ge 2$ and for any $\pi \in \mathcal P(n)\setminus \{0_n\}$ one has  $\Delta_\pi(t)=0$.
\end{itemize}
\end{Def}

Note that the reduceness condition does not depend on $\tau$ when $n\geq 2$.

\begin{Ex} If $n=2$, then $\Delta_{1_2}(t)=\Delta (t),$ where we recall that the diagonal operator $\Delta$ is the graph operation with one vertex and one edge. So $t$ is reduced if and only if $\Delta(t)=0$. 
\end{Ex}

\begin{Ex}Let $\mbf A_N$ be a family of matrices of size $N$ by $N$ and let $t$ be a $n$-graph polynomial, $n\geq 2$. Then the tensor matrix $t(\mbf A_N)$ defined in Example \ref{Rk:BigraphOp} is reduced if and only if, denoting by $B_{\mbf i}, \mbf i \in [N]^n,$ its components in the canonical basis, one has $ B_{\mbf i}=0$ as soon as two indices of $\mbf i$ are equal. In particular for $n=2$, a matrix is reduced whenever its diagonal entries are equal to zero.
\end{Ex}

We can now state the main result of the section.

\begin{Th} \label{Equivalence Free product free independence}Let $(\mcal A, \tau)$ be an algebraic traffic space with trace $\Phi$ and anti-trace $\Psi$. For each $j\in J$ let $\mcal A_j$ be a $\mcal G$-subalgebra. The following properties are equivalent:
\begin{enumerate}
	\item The $\mcal G$-subalgebras  $\mcal A_j, j\in J,$ are traffic independent (Definition \ref{Def:Freeness}),
	\item One has $\tau[h]=0$ for any $h=T_g(t_1\otimes \dots \otimes t_L)$ in $ \mbb C \mcal T\langle \bigsqcup_j \mcal A_j \rangle$ where $g\in \mcal B^{(0)}$ is a bigraph operation and $t_1\otimes \dots \otimes t_L$ is a $g$-alternated tensor product of reduced elements with respect to $\tau$.
	\item  One has $\Phi[h] = 0$ for any $h=T_g(t_1\otimes \dots \otimes t_L)$ in $ \mbb C \mcal G^{(2)}\langle \bigsqcup_j \mcal A_j \rangle$, where $g\in \mcal B^{(2)}$ is a bigraph operation and $t_1\otimes \dots \otimes t_L$ is a $g$-alternated tensor product of reduced elements with respect to $\tau$.
	\item  One has $\Psi[h] = 0$ for any $h=T_g(t_1\otimes \dots \otimes t_L)$ in $ \mbb C \mcal G^{(2)}\langle \bigsqcup_j \mcal A_j \rangle$, where $g\in \mcal B^{(2)}$ is a bigraph operation and $t_1\otimes \dots \otimes t_L$ is a $g$-alternated tensor product of reduced elements with respect to $\tau$.
\end{enumerate}
\end{Th}
Hence traffic independence is the centering of alternated bigraph operations of reduced elements with respect to $\tau, \Phi$ or $\Psi$ indifferently. The proof of the proposition is given in the next section.

As a direct application, we get a useful criterion of free independence.
\begin{Cor}\label{Corfreeind}
Let $(\mcal A, \tau)$ be an algebraic traffic space such that $\mcal A$ is a $^*$-algebra and the associated trace $\Phi$ is a state. Denote for any $a\in \mcal A$
	$$\eta(a)=\tau\big[^{a}\circlearrowleftRotBis \cdot  \circlearrowleftRot^{a^*}\big] -| \tau[\circlearrowleftRot^{a}]|^2= \Phi\big(\Delta(a^*)\Delta(a)\big)-|\Phi(a)|^2=\Phi (a^*\circ a )-|\Phi(a)|^2,$$
where we recall (see section \ref{Sec:AlgOp}) that $\Delta  = Z_{\circlearrowleft}$ is the diagonal operator and $(a\circ b) = Z_{\cdot \leftleftarrows \cdot}(a\otimes b)$ is the Hadamard product. Let $\mcal B\subset \mcal A$ be a unital $^*$-subalgebra such that $\eta(a)=0$ for any $a\in \mcal B$, and let $\mcal B_j \subset \mcal B$, $j\in J$, be subalgebras.  If $(\mcal B_j)_{j\in J}$ are traffic independent in $(\mcal A, \tau)$, then they are freely independent in the $^*$-probability space $(\mcal B, \Phi_{|\mcal B})$.
\end{Cor}

\begin{Ex} \begin{enumerate}
	\item In \cite[Proposition 2.16]{Male2011}, it is proved that two independent traffics $a$ and $b$ such that $\eta(a)\neq 0 \neq \eta(b)$ are not free independent with respect to the trace. If $\eta(a)\neq 0$ and $\eta(b)=0$, both situations can happen as we can see with the limits of Wigner matrices, uniform permutation matrices and diagonal matrices \cite{Male2011}: the map $\eta$ vanishes only for the two first models, a Wigner matrices is asymptotically free from a diagonal matrix, but a uniform permutation matrix is not asymptotically free from a diagonal matrix.
	\item In the context of the so-called \emph{asymptotically unitarily invariant} random matrices defined in Part \ref{Sec:CanonicalExtension}, the assumption of Corollary \ref{Corfreeind} is satisfied. Nevertheless we will see that in this particular case the free independence is explained in a more direct way and has stronger implications.
	\item Yet, the above corollary covers a much larger situation than the example previously mentionned. The example of the large uniform permutation matrix can be generalized for infinite rooted graphs. More precisely, recall Example \ref{Ex:Graphs2} of the $\mcal G$-algebra $\mcal A$ of locally finite rooted graphs on a set of vertices $\mcal V$. It is a classical fact that an element $A$ of $\mcal A$ which is both deterministic and unimodular is vertex-transitive (there exist automorphisms exchanging each pair of vertices). This property implies that the diagonal $\Delta(A) = \big(A(v,v) \one_{v=w}\big)_{v,w\in \mcal V}$ of $A$ is constant, and so one can apply the lemma. This gives a new proof of a result of Accardi, Lenczewski and Salapata \cite{Accardi2007} stating that the spectral distribution of the free product of infinite deterministic graphs is the free product of the spectral distributions.
\end{enumerate}
\end{Ex}

 \begin{proof}[Proof of Corollary \ref{Corfreeind}]Since the trace defined on $\mcal A$ is a state, the assumption implies, for every $a\in \mcal B$, that $\Delta(a)$ has the same $^*$-distribution as $\Phi(a)\mbb I$. Let $(\mcal B_j)_{j\in J}$ traffic independent $^*$-subalgebras of $\mcal B$. Let $a_1,\ldots,a_n \in \mcal B,$ such that for any $k\in [n],$ $\Phi(a_k) =0$ and $a_k\in \mcal B_{j_k},$ with $j_{1}\neq j_{2} \neq  \dots \neq j_n $. Then, 
 $$\Phi\Big( \big(a_1-\Delta(a_1)\big) \ldots\big(a_{n}-\Delta(a_{n})\big) \Big)=\Phi\Big(\big(a_1-\Phi(a_1)\big)\ldots \big(a_{n}-\Phi(a_{n})\big) \Big)=\Phi(a_1\ldots a_n ).$$
 Let $g$ be the bigraph operation with two outputs $in$ and $out$, $n$ inputs and $n$ connectors, whose graph is a directed line from $in$ to $out$, with input vertices (alternating with the connectors) ordered consecutively from $in$ to $out$. Then one has
 $$\Phi\Big( \big(a_1-\Delta(a_1)\big) \ldots\big(a_{n}-\Delta(a_{n})\big) \Big)=\Phi\Big[T_g\Big( \big(a_1-\Delta(a_1)\big) \otimes\ldots\otimes\big(a_{n}-\Delta(a_{n})\big) \Big)\Big],$$
and $\big(a_1-\Delta(a_1)\big) \otimes\ldots\otimes\big(a_{n}-\Delta(a_{n})\big)$ is a  $g$-alternated tensor product of reduced elements, so that by Theorem \ref{Equivalence Free product free independence} we get $\Phi(a_1\ldots a_n)=0$. 
 \end{proof}

\subsection{Proof of Theorem 2.8}\label{seq:Equivalence Free product free independence}

\subsubsection{A decomposition of graph polynomials}

We start by stating several preliminary lemmas. The first three statements are about the space of $n$-graph polynomials $\mbb C \mcal G\langle \bigsqcup_j \mcal A_j\rangle$. Note that in these lemmas we only assume that the sets $\mcal A$ and $\mcal A_j$, $j\in J$, are arbitrary ensembles, we do not use their $\mcal G$-algebra structure. The first lemma gives an explicit characterization of reducedness.

\begin{Lem}\label{Mobius} Let $\mcal A$ be an ensemble and for $n\geq 0$ let $m$ a $n$-graph monomial labeled in $\mcal A$. Denote by $\mcal O$ the output set of $m$ (empty if $n=0$). For each partition $\sigma$ of $\mcal O$, recall that $\Delta_\sigma(m)=m^\sigma$ denotes the graph monomial obtained by identifying the outputs of $m$ that belong to a same block of $\sigma$. Let us denote by $\mrm{Mob}$  the M\"obius function for the poset of partitions of $\mcal O$ (Section \ref{Sec:Mobius}) and $0_{\mcal O}$ the partition of $\mcal O$ made of singletons. Then, with $(\cdot)$ denoting the graph with no edges,
	$$p(m):= \left\{ \begin{array}{ccc}
		m - \tau(m) \times (\cdot) & \mrm{if} & n=1,0,\\
		\sum_{\sigma\in \mcal P(\mcal O)} \mrm{Mob}(0_{\mcal O}, \sigma)m^\sigma & \mrm{if} & n\geq 2,
		\end{array} \right.
	$$
 is a reduced $n$-graph polynomial with respect to $\tau$. Moreover, extending $p$ by linearity on $n$-graph polynomials, every reduced $n$-graph polynomial $t$ satisfies $t=p(t)$.
\end{Lem}
\begin{proof}The proposition is clear if $n=0,1$. Assume $n\geq 2$ in the following. For any $\nu \in \mcal P(\mcal O),$ 
\begin{align*}
\Delta_\nu\big(p(m) \big) = \Delta_\nu \left(\sum_{\sigma\in \mcal P(\mcal O)} \mrm{Mob}(0_{\mcal O}, \sigma)m^\sigma\right)=\sum_{\mu\in \mcal P(\mcal O)}  \left(\sum_{\sigma\in \mcal P(\mcal O):\sigma\vee\nu=\mu } \mrm{Mob}(0_{\mcal O},\sigma)\right) m^\mu,
\end{align*}
where $\sigma\vee\nu$ is the join of the partitions $\sigma$ and $\nu$, i.e. the smallest partition whose blocks contain those of $\sigma$ and $\nu$.
Now, for any $\mu\in \mcal P(\mcal O),$  by \cite[Sections 3.6 and 3.7]{Stan12} for the first and last equalities, one has
\begin{align*}
\sum_{\sigma\in \mcal P(\mcal O):\sigma\vee\nu=\mu }\mrm{Mob}(0_{\mcal O},\sigma)&=\sum_{\sigma\le \mu} \ \sum_{\sigma\vee \nu\le \xi\le \mu}\mrm{Mob}(\xi,\mu) \mrm{Mob}(0_{\mcal O},\sigma)\\
&=\sum_{\nu\le \xi\le \mu}\mrm{Mob}(\xi,\mu) \left(\sum_{\sigma\le \xi}\mrm{Mob}(0_{\mcal O},\sigma)\right)\\
&=\sum_{\nu\le \xi\le \mu}\mrm{Mob}(\xi,\mu)\delta_{\xi,0_{\mcal O}}=\delta_{\nu,0_{\mcal O}}\mrm{Mob}(0_{\mcal O},\mu),
\end{align*}
Hence we have obtained $ \Delta_\nu \big(p(m)\big) = \delta_{\nu, 0_{\mcal O}} p(m)$, that is $p(m)$ is reduced.

Let us now prove that every reduced graph polynomial $t$ satisfies $t=p(t)$. For any $\eta\in \mcal P(\mcal O)$ let us define $p_\eta(m) = \sum_{\pi \geq \eta} \mrm{Mob}(\eta, \pi) m^\pi$. Extended by linearity, the $p_\sigma$'s define a partition of the unity, that is  $t = \sum_{\eta\in \mcal P(O)}p_\eta(t)$ for any $t$. By the same computation as above, one sees that $t$ is reduced if and only if $p_\eta(t)=\delta_{\eta = 0_{\mcal O}}t$ for any $\eta$. Hence we obtain $t=p_{ 0_{\mcal O}}(t)=p(t)$ as expected.
\end{proof}

The second lemma tells that any $n$-graph polynomial in $\mbb C \mcal G\langle \bigsqcup_j \mcal A_j\rangle$ can be written as a linear combination of bigraph operations evaluated in alternated and reduced elements.

\begin{Def}\label{Def:ColBigraph} Let $J$ be an index set and, for each $j\in J$, let $\mcal A_j$ be an ensemble.
\begin{itemize}
	\item A colored bigraph operation with color set $J$ is a couple $(g,\gamma)$ where $g\in \bigcup_{n\geq 0}\mcal B^{(n)}$ is a bigraph operation with $L\geq 1$ inputs and $\gamma : [L] = \{1\etc L\}\to J$ is a map telling that the $\ell$-th input is of color $\gamma(\ell)$. With small abuse, we still denote $g$ instead of $(g, \gamma)$ the colored bigraph operation with implicit mention of $\gamma$. We say that $g$ is alternated if $\gamma$ associates distinct colors to the neighbours of a same connector. We denote by $\mcal B^{(n)}_{col}$ the set of colored bigraph operations with $n\geq 0$ outputs and by $\mcal B^{(n)}_{alt}$ the set of alternated colored bigraph operations.
	\item Let $t_1\etc t_L$ be graph polynomials of arbitrary ranks in $\bigsqcup_j \mcal A_j$. We say that the tensor product $\mbf t =(t_1\otimes \dots \otimes t_L)$ is $g$-colored if $t_\ell \in \mbb C \mcal G^{(d_\ell)} \langle \mcal A_{\gamma(\ell)} \rangle$ for any $\ell=1\etc L$.
\end{itemize}
\end{Def}

\begin{Lem}\label{Lem:DSD} Let $J$ be an index set, let $\mcal A_j$ be an ensemble for each $j\in J$, and let $\tau: \mbb C \mcal T\langle \bigsqcup \mcal A_j \rangle \to \mbb C$ be a unital linear form. Then we have the decomposition
	\eq
		\mbb C \mcal G^{(n)}\langle \bigsqcup_{j\in J} \mcal A_j \rangle 
		 & = & \mbb C \, (\cdot)  + \sum_{\substack{g\in \mcal B^{(n)}_{alt}}} \mcal W_{g}
	\qe
where $\mcal W_{g}$ is the space generated by $T_g(t_1\otimes \dots \otimes t_L)$, for any $(t_1\otimes \dots \otimes t_L)$ which is a $g$-colored tensor product of reduced elements with respect to $\tau$, and $\mbb C \, (\cdot)$ denotes the space generated by the graph monomial $(\cdot)$ with a single vertex and no edge in $\mbb C \mcal G^{(n)}\langle \bigsqcup_{j\in J} \mcal A_j \rangle$. 
\end{Lem}

 \begin{proof} Let us denote by $\mcal E^0$ the vector space on the right hand side, spanned by $(\cdot)$ and the $\mcal W_g$'s. For any $k\geq 1$, let us denote by $\mcal E_k$ the vector space generated by the graph polynomials $T_g(\mbf t) $, where $g\in \mcal B_{col}$ has a number of vertices less than or equal to $k$ and $\mbf t$ is $g$-colored. Let us prove by induction that  for any $k\ge 1,$ $\mcal E_k\subset \mcal E^0$. Since $\mbb C \mcal G^{(n)}\langle \bigsqcup_{j\in J} \mcal A_j \rangle  = \bigcup_{k\geq 0} \mcal E_k$, this shall conclude the proof.

 To begin with, note that for any $n\geq 1$ the only element of $\mcal E_1$ is $g=(\cdot)$ consists in a single connector vertex which is the common values of all outputs. Hence $\mcal E_1= \mbb C (\cdot) \subset \mcal E^0.$ If $n=0$, then $g=(\cdot)$ consists in a single input vertex and $W_{g,\gamma}$ is the linear space generated by the $\mbb C \mcal T\langle  \mcal A_j \rangle$, $j\in J$. Every element $T$ in this space can be written $T = \tau[T] (\cdot) + \big( T - \tau[T] (\cdot) \big) \in \mbb C \mbb I \oplus_j W_{(\cdot),j}$.

 Let us now assume the claim for $k\in\N$. For any $k'\geq 1$ and any $s\geq 0$ we denote by $\mcal E_{k'}^s$, the vector space spanned by the graph polynomials $T_g(\mbf t) $ of $\mcal E_{k'}$ where at most $s$ elements are non reduced in $\mbf t$. Note in particular that $\mcal E^0 = \bigcup_{k'\geq 0} \mcal E^0_{k'}$ and $\mcal E_{k'} = \bigcup_{s\geq 0} \mcal E_{k'}^s$. Let us prove by induction on $s\ge 0$ that $\mcal E^s_{k+1}\subset \mcal E.$
 
 We first assume that $\mcal E_{k+1}^s\subset \mcal E$ for some $s\geq 0$ and consider $T_g(\mbf t) $, a bigraph operation $g$ with $k+1$ vertices evaluated in a $g$-colored tensor product  $\mbf t$ with $s+1$ non reduced elements. Without loss of generality, we can assume the first graph $t_1$ is not reduced. We will denote $t_1\in \mbb C\mcal G^{(d_1)} \mcal \langle \mcal A_{\gamma(1)}\rangle $. If the rank $d_1$ of $t_1$ is one, then we can write $T_g(\mbf t) = T_g\left((t_1-\Phi(t_1))\otimes t_2 \ldots\otimes t_L \right) + \Phi(t_1) T_g\left((\cdot)\otimes t_2 \ldots\otimes t_L\right) =:a+b$ where $a\in \mcal E_{k+1}^s$ and  $b\in \mcal E_{k},$ so that $T_g(\mbf t)\in \mcal E$. If the rank of $t_1$ is greater than one, according to Lemma \ref{Mobius} we can write $t_1=r+\sum_{i=1}^m x_i$, where $r\in \mbb C\mcal G^{(d_1)} \langle \mcal A_{\gamma(1)}\rangle$ is a reduced graph polynomial and  $x_1,\ldots, x_m\in\mbb C\mcal G^{(d_1)} \langle\mcal A_{\gamma(1)}\rangle$ are graph monomials having at least two outputs equal to the same vertex. Then, for any $i=1\etc m,$   $T_g\left(x_i\otimes t_2\ldots\otimes t_L\right)\in \mcal E_{k}$ and  $T_g(r\otimes t_2\otimes \ldots \otimes t_L)\in \mcal E_{k+1}^s,$ so that $T_g(\mbf t)\in \mcal E.$
 \end{proof}
Below, $p$ denotes the operator defined in Lemma \ref{Mobius}. 

\begin{Cor}\label{Cor:Monom} In the setting of Lemma \ref{Lem:DSD}, the linear space $\mbb C \mcal G^{(n)}\langle \bigsqcup_{j\in J} \mcal A_j \rangle$ is generated by the $n$-graph polynomials of the form $ T_g\big( p(m_1) \otimes \dots \otimes p(m_L) \big)$, where $g\in \mcal B^{(n)}_{alt}$ and $m_1\otimes \dots \otimes m_L$ is a $g$-colored tensor product of monomials, such that outputs of the $m_\ell$'s are pairwise distinct and $T_g$ does not identify any pair of outputs of each input. 
\end{Cor}

\begin{proof} Let $\mbf t'=(t_1,\dots, t_L)$ be an arbitrary sequence of $g$-alternated, reduced graph polynomials and denote $t_\ell = \sum_i \alpha^{(\ell)}_i m_{i,\ell}$ where the $ m_{i,\ell}$'s are graph monomials. Then we have
	\eq
	  T_g(t_1\otimes \dots \otimes t_L)  & = & 
		 T_g\big(p(t_1)\otimes \dots \otimes p(t_L)\big) \\
		 & = & \sum_{i_1\etc i_L}  \Big( \prod_{\ell=1}^L \alpha^{(\ell)}_{i_\ell}   \Big) \times  T_g\big(p(m_{i_1,1})\otimes \dots \otimes p(m_{i_L,L})\big).
	\qe
By Lemma \ref{Lem:DSD}, we get that $\mbb C \mcal G^{(n)}\langle \bigsqcup_{j\in J} \mcal A_j \rangle$ is generated by the elements of the form $ T_g\big( p(m_1) \otimes \dots \otimes p(m_L) \big)$, where $m_1\otimes \dots \otimes m_L$ is a $g$-alternated tensor product of monomials. Moreover, if $m$ has two outputs that are equal, then $p(m)=0$. Hence one can assume that the outputs are pairwise distinct for each $m_\ell$.
\end{proof}

\subsubsection{Solidity, validity and primitivity}

This section contains most of the arguments of the proof of Theorem \ref{Equivalence Free product free independence} and it introduces tools that will be used later, in particular in Section \ref{Sec:PosFreeProd} to prove the positivity of the free product. 

In the first statement, we see how the reducedness of $n$-graph polynomials for $n\geq 2$ simplifies the computation of combinatorial traces (reducedness when $n=1$ plays a role at the last stage of the proof). We shall need the following definition.

\begin{Def}\label{Def:Solid} Let $\mcal A$ be an ensemble and let $T=T_g(m_1\otimes \dots \otimes m_L)$ be a test graph in $\mcal T\langle  \mcal A \rangle$, where $g$ is a bigraph operation and $(m_1\otimes \dots \otimes m_L)$ is a tensor product of graph monomials, such that outputs of a same $m_\ell$ are pairwise distinct and the operation $T_g$ does not identify any pair of outputs of each $m_\ell$. Consider the graphs of the $m_\ell$'s as subgraphs of $T$ and denote 
\begin{itemize}
	\item by $V$ the vertex set of $T$,
	\item by $\mcal O_\ell \subset V$ the set of outputs of $m_\ell$,
	\item by $\pi_{|\mcal O_\ell}$ the restriction of $\pi\in \mcal P(V)$ on $\mcal O_\ell$, namely $\{ B \cup \mcal O_\ell, B \in \pi\}$, 
	\item by $0_{\mcal O_\ell}$ the partition of $\mcal O_\ell$ made of singletons.
\end{itemize}
Consider a partition $\pi \in \mcal P(V)$. For each $\ell$ in $\{1\etc L\}$, we say that $m_\ell$ is \emph{solid} for $\pi$ whenever $\pi_{|\mcal O_\ell} = 0_{\mcal O_\ell}$. In other words, in $T^\pi$ there is no identification of outputs of the graph $m_\ell$. In a context where there is no confusion about $m_1\etc m_L$, we simply say that $\pi$ is solid, when $m_\ell$ is solid for $\pi$  for any $\ell=1\etc L$.\end{Def}

Beware that there is no uniqueness in the decomposition $T=T_g(m_1\otimes \dots \otimes m_L)$.

\begin{Lem}\label{Lem:RedEffect} Let $\mcal A$ be an ensemble and let $h = T_g(t_1\otimes \dots \otimes t_L) \in \mbb C \mcal T\langle    \mcal A \rangle$ be a $0$-graph polynomial, where $g \in \mcal B^{(0)}$ is a bigraph operation and 
\begin{itemize}
	\item $t_\ell = m_\ell$ is a monomial if $n_\ell=1$,
	\item $t_\ell = p(m_\ell)$ where $m_\ell$ is a monomial with pairwise distinct outputs if $n_\ell\geq 2$.
\end{itemize}
Let $T$ denote the test graph $T_g(m_1\otimes \dots \otimes m_L) \in  \mcal T\langle \mcal A \rangle$. Then the trace of $h$ is the sum of the quotient graphs of $T$ by solid partitions: with notations of Definition \ref{Def:Solid}, one has
$$\tau[h] = \sum_{\substack{\pi\in \mcal P(V) \\ \mrm{solid} }} \tau^0\big[T^\pi\big].$$
\end{Lem}

\begin{proof}

Without loss of generality, we can assume that the indices $\ell\in \{1\etc L\}$ such that $n_\ell\geq 2$ are $1\etc K$ for $K\leq L$. Let us denote $c_{\sigma_k} =  \mrm{Mob}(0_{\mcal O_k}, \sigma_k)$ for any $\sigma_k\in \mcal P(\mcal  O_\ell)$ and any $k=1\etc K$. Consider the graph $T_{\sigma}=T_g( m_1^{\sigma_1} \otimes \dots \otimes m_L^{\sigma_L} )$, with the convention that $m_\ell^{\sigma_\ell} = m_\ell$ if $\ell>K=1$. 
The definition of $p$ in Lemma~\ref{Mobius} allows to write
$$\tau[h]=\sum_{\substack{ \sigma_\ell\in \mcal P(\mcal O_\ell) \\ \forall \ell=1\etc K} } \Big(  \prod_{k=1}^K c_{\sigma_k} \Big) \tau[T_{\sigma}] . $$
Denoting by $V_{\sigma}$ the vertex set of $T_\sigma$, the linearity of $\tau$ and the definition of the injective trace lead to

\eqa\label{Eq:ProofCharIndep}
	\tau[h] & = & \sum_{\substack{ \sigma_\ell\in \mcal P(\mcal O_\ell) \\ \forall \ell=1\etc K} } \Big(  \prod_{k=1}^K c_{\sigma_k}   \Big)  \sum_{ \substack{ \pi\in \mcal P(V_{ \sigma}) }} \tau^0\big[T_{\sigma}^\pi \big].
\qea
Recall that for two partitions $\pi$ and $\pi'$ of some set, $\pi\leq \pi'$ means that the blocks of $\pi$ are included in blocks of $\pi'$. Given $\sigma_1 \etc \sigma_L$ as above, forming a graph $T_\sigma^{\mu}$ with a choice of a partition $\mu$ of $V_\sigma$ is equivalent to forming a graph $T^\pi$ with a choice of a partition $\pi$ of $V$ with the restriction below. 

\begin{enumerate}
	\item We consider firstly for each $\ell=1\etc L$ a partition $\pi_\ell$ of the vertex set $V_\ell$ of $m_\ell$. We assume that $\pi_{\ell}$ does more identifications of outputs of $m_\ell$ than $\sigma_\ell$: for any $\ell=1\etc K$, one has $ {\pi_\ell}_{|\mcal O_\ell}\geq \sigma_\ell $.
	\item Given a collection $\Pi=(\pi_1\etc \pi_L) \in \prod_{\ell=1}^L\mcal P(V_\ell)$ of partitions as in the previous point, we consider a partition $\pi$ of $V$ with same identification as the $\pi_\ell$ for vertices of the monomials:  for any $\ell=1\etc L$, one has $ \pi_{V_\ell} \geq \pi_\ell $. We denote by $\mcal P_\Pi(V)$ the set of partitions $\pi \in \mcal P(V)$ with this condition.
\end{enumerate}
 
We then obtain as expected, using the property of the M\"obius map \cite[Sections 3.6 and 3.7]{Stan12} in the third identity,
	\eq
		\tau[h] & = &  \sum_{\substack{ \sigma_\ell\in \mcal P(\mcal O_\ell) \\ \forall \ell=1\etc K} } \Big(  \prod_{k=1}^K c_{\sigma_k}   \Big) \sum_{ \substack{ \pi_\ell \in \mcal P(V_\ell) \\ \forall \ell=1\etc L \\ \mrm{s.t. \ }\sigma_\ell \leq  {\pi_\ell}_{|\mcal O_\ell} \\ \forall \ell=1\etc K}} \sum_{  \pi \in   \mcal P_\Pi(V)} \tau^0\big[T^{  \pi} \big]\\
		& = &   \sum_{ \substack{ \pi_\ell \in \mcal P(V_\ell) \\ \forall\ell=1\etc L}} \Big( \prod_{k=1}^K \ \sum_{\substack{ \sigma_k\in \mcal P(\mcal O_\ell) \\ \mrm{s.t. \ } \sigma_k \leq  {\pi_k}_{|\mcal O_k}} }   c_{\sigma_k}   \Big) \sum_{  \pi \in \mcal P_\Pi(V)} \tau^0\big[T^{  \pi} \big]\\
		& =&  \sum_{ \substack{ \pi_\ell \in \mcal P(V_\ell) \\  {\pi_\ell}_{|\mcal O_\ell} = 0_{\mcal O_{\ell}} \\ \forall \ell=1\etc L}} \ \sum_{  \pi \in \mcal P_\Pi(V)} \tau^0\big[T^{  \pi} \big] = \sum_{\substack{\pi\in \mcal P(V) \\ \pi_{|\mcal O_\ell} = 0_{\mcal O_\ell}\\ \forall \ell=1\etc L }} \tau^0\big[T^\pi\big].
	\qe

\end{proof}

The next lemma highlights an elementary property of the graph of colored components that we will use several times. We use the following terminology.

\begin{Def}\label{Def:Valid} We say that a partition $\pi$ of the vertex set of $T \in \mcal T\langle \bigsqcup \mcal A_j \rangle$ is \emph{valid} whenever $\mcal G \mcal C \mcal C(T^\pi)$ is a tree.
\end{Def}

\begin{Lem}\label{Lem:CycleGCC} Let $J$ be an index set and, for each $j\in J$, let $\mcal A_j$ be an ensemble. Let consider the data of
\begin{itemize}
	\item a test graph $T  \in \mcal T\langle \bigsqcup_{j}\mcal A_j\rangle$ such that $\mcal G \mcal C \mcal C(T)$ is not a tree,
	\item a valid partition $\pi$ of the vertex set of $T$,
	\item  a simple cycle $\mcal C: o_1, S_1, o_2, S_2 \etc o_K, S_K$ of $\mcal G \mcal C \mcal C(T)$, $K\geq 2$, where $S_{k}$ is a colored component of $T$ attached to the connectors $o_{k}$ and $o_{k+1}$, with indices $k$ modulo $K$ (the $o_k$'s and $S_k$'s are pairwise distinct).
\end{itemize}
Then, identifying connectors $o_k$'s with their image in $T$, there exist at least two indices $k, k'\in \{1\etc K\}$ such that $o_k\sim_\pi o_{k+1}$ and $o_{k'}\sim_\pi o_{k'+1}$, with indices modulo $K$. If moreover $K\geq 4$ then there exist non consecutive such $k,k'$, i.e. one can choose $|k-k'|\geq 2$ with distance in $ {\mbb Z}/{K\mbb Z }$.
\end{Lem}

In simple words, we cannot fold a cycle into a tree of colored components without pinching at least two colored components.

\begin{proof} Given $\pi\in \mcal P(V)$, the cycle $\mcal C$ on $\mcal G \mcal C \mcal C(T)$ induces a closed path on $\mcal G \mcal C \mcal C(T^\pi)$. Since $\mcal G \mcal C \mcal C(T^\pi)$ is a tree, the closed path visits a subtree of $\mcal G \mcal C \mcal C(T^\pi)$. This subtree has at least two leaves (vertices of degree one). They do not consist in connectors, since colors are alternated along the cycle. Hence each leaf corresponds to one or several graphs $S_k$ for which we have identified $o_k$ and $o_{k+1}$. When $K\geq 4$ it is clear that we can choose separated connectors.  Hence the result.\end{proof}

 We deduce the following corollary which  implies that traces of alternated bi-graph operations in reduced elements vanish, by a simple argument of linearity that is given explicitly in next section. .

\begin{Cor}\label{Cor:TraceTree} Let $j$ be an index set and, for each $j\in J$, let $\mcal A_j$ be an ensemble. Let $\tau: \mbb C \mcal T \langle \bigsqcup_j \mcal A_j \rangle \to \mbb C$ be a unital linear form such that $\tau$ is the free product of its restrictions on test graphs labeled in $\mcal A_j$, $j\in J$.  Let $h = T_g(t_1\otimes \dots \otimes t_L)$ in $\mbb C \mcal T \langle \bigsqcup_j \mcal A_j \rangle$ where $g\in \mcal B^{(0)}_{alt}$ and $(t_1\otimes \dots \otimes t_L)$ is $g$-colored and satisfies $t_\ell=m_\ell$ for $n_\ell=1$ and $t_\ell=p(m_\ell)$ for $n_\ell\geq 2$ as in Lemma \ref{Lem:RedEffect}. Then if $g$ is not a tree, $\tau[h]=0$, and otherwise 
	$$\tau[h] = \prod_{\ell=1}^L\tau[ t_\ell],$$
where in the above formula we extend $\tau$ as a linear map $\tau: \bigoplus_{n\geq 0} \mbb C \mcal G^{(n)}\langle \mcal A \rangle$ by forgetting the position of the outputs.
\end{Cor}

The proof of the corollary can be summarised as follows.  Let $h = T_g(t_1\otimes \dots \otimes t_L)$ and $T=T_g(m_1\otimes \dots \otimes m_L)$ be as in the above corollary. By Lemma \ref{Lem:RedEffect} and since $\tau$ is the free product of its restriction on the $\mcal A_j$'s, one has $\tau[h] = \sum_\pi \tau^0[T^\pi],$ where the sum is over valid and solid partitions $\pi$. By Lemma \ref{Lem:CycleGCC} the set of such maps is empty if $g$ is not a tree. The first part of the corollary is then a direct consequence of the lemmas. It will be enough to prove the following. 
\begin{Lem}\label{Lem:Primitive} We say that a partition $\pi$ of the vertex set of $T\in \mcal T \langle \bigsqcup_j \mcal A_j, j \in J \rangle$ is \emph{primitive} whenever it satisfies one of the following equivalent properties:
\begin{enumerate}
	\item the graph of colored components is preserved after a quotient by $\pi$: $\mcal G \mcal C \mcal C(T^\pi) = \mcal G \mcal C \mcal C(T)$;
	\item for any vertices $v,w$ of $T$  belonging to different colored components  such that $v\sim_\pi w$, the components of $v$ and $w$ in $T$ have exactly one connector $o$ in common and $v\sim_\pi o \sim_\pi w$;
	\item the colored components $m_1\etc m_L$ of $T$ are solid for $\pi$ and given its restriction $\Pi=(\pi_{|V_1} \etc \pi_{|V_L})$ on the vertex sets of the $m_\ell$'s, it is the smallest partition of the set $\mcal P_\Pi(V)$ constructed in the proof of Lemma \ref{Lem:RedEffect}.
\end{enumerate}
Let $T$ be a test-graph such that $\mcal G \mcal C \mcal C(T)$ is a tree and let $\pi$ be a valid partition which is solid for the colored components of $T$. Then $\pi$ is primitive.
\end{Lem}

The lemma implies that the trace of $h$ is the sum of the injective trace of quotient graphs of $T$ by primitive partitions. Denote by $T_\ell$ the test graph of $t_\ell$ and $V_\ell$ its vertex set. By multiplicativity with respect to the colored components in the definition of traffic independence, for any $\pi$ primitive we have $\tau^0[T^{\pi}] = \prod_{\ell = 1}^L \tau^0[T_\ell^{\pi_\ell}]$ where $\pi_\ell$ is the restriction of $\pi$ to $V_\ell$. Hence $\tau[h] =  \prod_{\ell = 1}^L  \sum_{  \pi_\ell  }  \tau^0[T_\ell^{\pi_\ell}]$ where the sums are over the solid partitions $\pi_\ell$ of $V_\ell$ with respect to $m_\ell$. By Lemma \ref{Lem:RedEffect} again, the sum of quotients of $T_\ell$ by solid partitions is $\tau\big[p(T_\ell) \big].$ Hence this last lemma implies the corollary.

\begin{proof}[Proof of Lemma \ref{Lem:Primitive}]
We prove that a solid partition $\pi$ which is not primitive is not valid: that is if $\pi$ does not identify outputs of a same $m_\ell$ but identifies vertices of different colored components in a non trivial way, then $\mcal G \mcal C \mcal C(T^\pi)$ is not a tree. So let $v$ and $w$ be two vertices in different colored components and denote by $T_{v\sim w}$ the graph obtained from $T$ by identifying $v$ and $w$. Then $T^\pi$ is a quotient of $T_{v\sim w}$, and we apply Lemma \ref{Lem:CycleGCC} to the graph $T_{v\sim w}$, the induced partition partition $\pi_{v\sim w}$ such that $T^\pi = (T_{v\sim w})^{\pi_{v\sim w}}$, and the cycle coming from the path between $v$ and $w$. All colored components of this cycle are not solid, but only those that are attached to $v$ and $w$, as we explain thanks to the enumeration below. Lemma \ref{Lem:CycleGCC} tells that $\pi_{v\sim w}$ is not valid if the cycle has length at least  4, which implies that $\pi$ is not valid. The remaining case ($K=2,3$) are considered after the description of the different possibilities for $\mcal G \mcal C \mcal C (T_{v\sim w})$, see Figure \ref{Fig:05}. 

Say that a vertex of $V_\ell$ that is not an output is an internal vertex. A vertex which is not internal is associated to a connector. We decompose five alternatives:

  \begin{figure}[h!]
    \begin{center}
     \includegraphics[width=125mm]{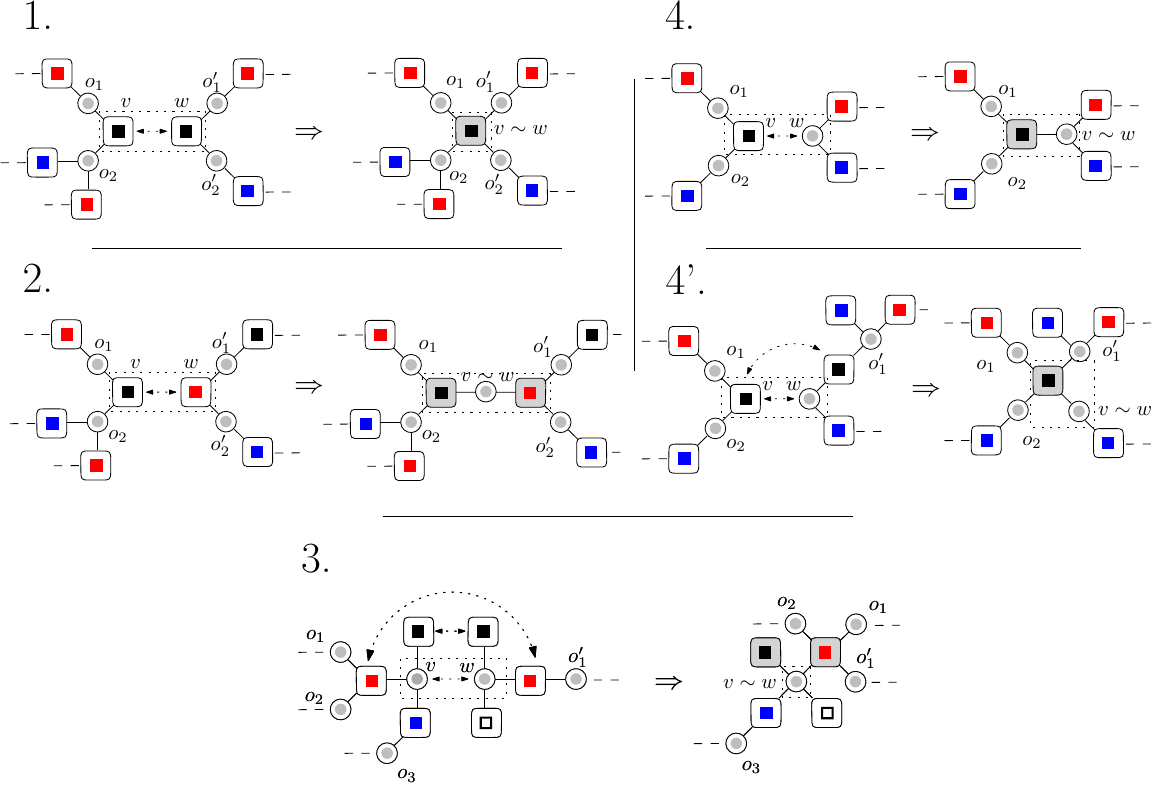}
    \end{center}
    \caption{For each of the five items of the figure, the left-most picture is a local detail of the graph $\mcal G \mcal C \mcal C(T)$. Square vertices represent inputs, circle vertices represent  connectors, and the different colors for inputs represent different labels $\mcal A_j$. They are two different parts of $\mcal G \mcal C \mcal C(T)$ (on the left and on the right) that contain respectively the vertex $v$ and $w$ (inside the dotted rectangle)  identified to give the right-most picture for each item. The right-most picture of each item is a detail of $\mcal G \mcal C \mcal C(T_{v\sim w})$. They are different cases, depending if $v$ and $w$ are input or output vertices and on the colors of the input vertices. An input vertex is in grey when it is involved in the identification (it is not a colored component of the original graph $T$).}
    \label{Fig:05}
  \end{figure}

\begin{enumerate}
	\item If $v$ and $w$ are internal vertices of components of the same color, then $\mcal G \mcal C \mcal C (T_{v\sim w})$ is obtained by identifying these components in $\mcal G \mcal C \mcal C (T)$.
	\item If $v$ and $w$ are internal vertices of components of different colors, then $\mcal G \mcal C \mcal C (T_{v\sim w})$ is obtained by creating a new connector between them in $\mcal G \mcal C \mcal C (T)$.
	\item If $v$ and $w$ are not internal vertices, then $\mcal G \mcal C \mcal C (T_{v\sim w})$ is obtained by identifying them in $\mcal G \mcal C \mcal C (T)$, then identifying the possible components of same colors attached to $v$ and $w$, and then reducing the number of edges attaching them to the connectors from two to one. In general for a partition $\pi$ of $\mcal P(V)$, there may exist a component attached both to $v$ and $w$ (which results in other operations), but this is not possible if $v$ and $w$ do not belong to a same component.
	\item If $v$ is an internal vertex and $w$ a connector that is not attached to a component of the same color as the one containing $v$, then $\mcal G \mcal C \mcal C (T_{v\sim w})$ is obtained by putting an edge between the component of $v$ and $w$ in $\mcal G \mcal C \mcal C (T)$.
	\item [4'] If $v$ is an internal vertex and $w$ a connector attached to a component of the same color as the one containing $v$, then $\mcal G \mcal C \mcal C (T_{v\sim w})$ is obtained by identifying these components in $\mcal G \mcal C \mcal C (T)$.
\end{enumerate}
Hence the path between $v$ and $w$ in $T$ induces a cycle $\mcal C: o_1,S_1\etc o_K,S_K$ on $\mcal G \mcal C \mcal C (T_{v\sim w})$. The components $S_1\etc S_K$ of $\mcal G \mcal C \mcal C (T_{v\sim w})$ are the original components of $\mcal G \mcal C \mcal C (T)$ except at most for two new components attached to a same connector (in grey in Figure \ref{Fig:05}). The partition $\pi$ cannot be valid.

 When $K=2$, then a partition of the vertices of $T_{v \sim w}$ is possibly valid only if the two connectors of the cycle are identified, and so are the two outputs of the colored components $S_1$ and $S_2$. At least one of the components is an original one, except if $v$ and $w$ are in the second situation in the above list: they are internal vertices of colored components of different colored, and a new connector appears in $T_{v\sim w}$. In that case the $\mcal G \mcal C \mcal C$ of a quotient of $T_{v\sim w}$ is a tree only if $v\sim w$ is identified with the connector between the initial components of $v$ and $w$ (this is the particular case). For $K=3$, getting a tree needs at least one identification that reduces the problem to the case $K=2$. This concludes the proof of the corollary.

\end{proof}

\subsubsection{End of the proof}

We can now achieve the proof of Theorem \ref{Equivalence Free product free independence}. To start with, we prove that the first two  properties are equivalent. Assume first that the $\mcal A_j, j \in J$, are independent and let us prove that every alternated $0$-bigraph polynomial in reduced elements is centered using the preliminary results of the section. By Lemma \ref{Lem:DSD} and Corollary \ref{Cor:Monom}, it is sufficient to consider $h = T_g(t_1\otimes \dots \otimes t_L) \in \mbb C \mcal T \langle \bigsqcup_j \mcal A_j\rangle$ where $g\in \mcal B^{(0)}_{alt}$ and $t_1\otimes \dots \otimes t_L$ $g$-colored such that $t_\ell = p(m_\ell)$ for a graph monomial $m_\ell$ for each $\ell=1\etc L$. Without loss of generality, assume that the indices $\ell$ for which $n_\ell=1$ are $1 \etc K$ for $K\leq L$. For any $\mbf i =(i_1\etc i_{K})$ in $\{0,1\}^{K}$, let $h_\mbf i$ be the graph polynomial $T_g(\tilde t_1\otimes \dots \otimes \tilde t_L)$ where 
\begin{itemize}
	\item $\tilde t_\ell = t_\ell$ if $\ell>K$,
	\item $\tilde t_\ell = m_\ell$ if $\ell\leq K$ and $i_\ell=0$,
	\item $\tilde t_\ell = -\tau[m_\ell] \times (\cdot)$ if $\ell\leq K$ and $i_\ell=1$, where $ (\cdot)$ is the $1$-graph monomial with a single vertex and no edges,
\end{itemize}
in such a way one has $h = \sum_{\mbf i \in \{0,1\}^{K}}  h_{\mbf i}.$ We apply Corollary \ref{Cor:TraceTree} to each $h_i$: one has
	$$\tau[h ] = \one\big( g \mrm{ \ is \ a \ tree}\big) \times \prod_{\ell=1}^L \tau[ p(T_\ell)],$$
where for $n_\ell=1$ we denote $p(T_\ell) = T_\ell - \tau[ T_\ell] \times (\cdot)$. Since a tree has leaves for which $\tau[ p(T_\ell)]=0$, we get $\tau[h]=0$ as expected.

Reciprocally, let $\tau$ be an unital linear form on $\mbb C \langle \bigsqcup_j \mcal A_j \rangle$. Assume it satisfies $\tau[h]=0$ for any $h$ given by an alternated bigraph operation in alternated and reduced elements. Then by Lemma \ref{Lem:DSD} and the previous paragraph, it coincides with the free product of the traffic distribution of the $\mcal A_\ell$'s on $\mbb C \mcal T\langle \bigsqcup_j \mcal A_j \rangle$. Hence the $\mcal G$-subalgebras $\mcal A_\ell$ are independent.

The second and fourth items (the same property for $2$-bigraph polynomials and w.r.t. the anti-trace $\Psi$) are equivalent since an element $h=T_g(t_1\otimes \dots \otimes t_L)$ of $\mbb C \mcal G^{(2)}\langle \mcal A\rangle$ is an alternated bigraph operation in reduced elements if and only if the element $\tilde h = T_{\tilde g}(t_1\otimes \dots \otimes t_L)$ of $\mbb C \mcal T\langle \mcal A\rangle$ is as well, where in $\tilde g$ we forget the position of the input and output. We recall that $\Psi(h) = \tau(\tilde h)$ by definition of $\Psi$.

The third item (the property for $2$-bigraph polynomials and w.r.t. the trace $\Phi$) implies the second one since if an element $h=T_g(t_1\otimes \dots \otimes t_L)$ of $\mbb C \mcal T\langle \mcal A\rangle$ is an alternated bigraph operation in reduced elements, then so is the element $\tilde h=T_{\tilde g}(t_1\otimes \dots \otimes t_L)$ of $\mbb C \mcal G^{(2)}\langle \mcal A\rangle$ obtained by declaring that a vertex is both the input and the outputs.

Assume now that the second item is satisfied and let us prove the third one. There we use again an argument of the previous section. Let $h = T_g(\mbf t) $ in $\mbb C \mcal G^{(2)}\langle \mcal A \rangle$ where $\mbf t$ is $g$-alternated reduced and given by monomials $t_\ell = p(m_\ell)$ as usual. If the two outputs $v$ and $w$ of $g$ are equal, then $g = \Delta(g)$ so $\Phi(h)=0$. Assume the outputs are distinct, so that $\Delta(g)$ is possibly not alternated at the position where $w$ and $v$ are identified (Figure \ref{Fig:05}). We apply Lemma \ref{Lem:CycleGCC} to the graph $T_{v\sim w}$, any partition, and a cycle given by a path between $v$ and $w$ in $T$. As in the proof of Lemma \ref{Lem:Primitive}, we get $\Phi(h)=0$.

\section{Products of traffic spaces}\label{Sec:PdtTrafSp}

This section is mainly devoted to the construction of the free product of traffic spaces, in particular under the context where we assume a positivity condition for the combinatorial trace. In the last subsection we also consider the tensor product of traffic spaces which will be used a couple of times in Part II. 

\subsection{The free product of algebraic traffic spaces}\label{Sec:FreeProdAlg}

Let us first consider an arbitrary ensemble $X$. The free $\mcal G$-algebra generated by $X$ is the space $\mbb C \mcal G\langle X \rangle$ generated by graph monomials whose edges are labeled by elements of $X$ . It is endowed with the natural structure of $\mcal G$-algebra given by the composition maps of the operad $\mcal G$ (Section \ref{Sec:AlgOp}): for any graph operation $g\in \mcal G_K$ and any graph polynomials $g_1\etc g_K\in \mcal G\langle X \rangle$ labeled in $X$,
	$$Z_g(g_1\otimes \dots \otimes g_K)=g(g_1\etc g_K),$$
where in the right hand side we identify the graph operation $g\in \mcal G_K$ with the associated graph monomial in $K$ variables. Hence $\mcal G$ is well a $\mcal G$-algebra. 

Let $\tau: \mbb C \mcal T\langle X \rangle \to \mbb C$ be an arbitrary linear map, unital in the sense that $\tau[ (\cdot)]=1$. Then it always induces a structure of algebraic traffic space on $\mbb C\mcal G\langle X \rangle$. To explain this fact, we first define a combinatorial trace $\tilde \tau: \mbb C \mcal T\big\langle \mbb C \mcal G\langle X \rangle \big\rangle \to \mbb C$ as follow. For any test graph $T$ labeled in $\mcal G\langle X \rangle $ with $K\geq 1$ edges denoted $e_1\etc e_K$ and labeled respectively by monomials $g_1 \etc g_K$ in $\mcal G\langle X \rangle$, we set $\tilde \tau[ T] = \tau[ T_{\mbf g}]$, where $T_{\mbf g}$ is the graph labeled in $X$ obtained from $T$ by replacing the edge $e_k$ by the graph $g_k$ for any $k=1\etc K$. Then we extend $\tilde \tau$ by multi-linearity with respect to the edges and set $\tilde \tau[ (\cdot)]=1$. 

\begin{Lem}\label{Lem:AssProdEns} The map $\tilde \tau: \mbb C \mcal T\big\langle \mbb C \mcal G\langle X \rangle \big\rangle \to \mbb C$ satisfies the associativity property, and so endows $\mbb C \mcal G \langle X \rangle$ with a structure of algebraic traffic space.
\end{Lem}

\begin{proof} Let $T \in  \mcal T\langle X \rangle$ whose edges are denoted $e_1\etc e_n$, where $e_1$ has label $Z_h\big( g_1 \otimes \dots \otimes g_K) = h(g_1\etc g_K)$ and $e_i$, $i\geq 2$, has label $g_{K+i-1}$ for graph monomials $g_1\etc g_{K+n-1}$ labeled in $X$. We have by definition $\tilde \tau[T] = \tau[\tilde T]$ where $\tilde T$ is the graph labeled in $X$ obtained by replacing $e_1$ by $h(g_1\etc g_K)$ and $e_i$, $i\geq 2$, by $g_{K+i-1}$. But we have $\tau[\tilde T]= \tilde \tau[T_h]$, where $T_h$ is the graph labeled in $\mcal G \langle X \rangle$ obtained by replacing in $T$ the edge $e_1$ by $h$. This implies the associativity property $\tilde \tau [T] = \tilde \tau[T_h]$.
\end{proof}

Let now $J$ be a labeling set and for each $j\in J$ let $X_j$ be an ensemble. Recall that we denote by $\bigsqcup_{j\in J}X_j$ the set of couples $(j,x)$ where $j\in J$ and $x\in X_j$. Assume that for each $j\in J$ we are given a unital linear map $\tau_j: \mbb C \mcal T\langle X_j \rangle \to \mbb C$, and denote by $\tau:\mbb C \mcal T\langle \bigsqcup_{j\in J}X_j \rangle \to \mbb C$ the free product of the $\tau_j$, $j\in J$. Denote by $\tilde \tau$ the combinatorial trace on $\mbb C \mcal T\big\langle \mbb C \mcal G\langle \bigsqcup_{j\in J}X_j \rangle\big\rangle \to \mbb C$ induced by $\tau$ and by $\tilde \tau_j$ the restrictions of $\tilde \tau$ to the subspaces $\mbb C \mcal T\big\langle \mbb C \mcal G\langle X_j \rangle\big\rangle$ generated by test graphs whose labels are graphs labeled in $X_j$, $j\in J$.

\begin{Lem}\label{Lem:jesaisplus} The map $\tilde \tau$ is the free product of the $\tilde \tau_j$'s, $j\in J$. Hence the $\mcal G$-subalgebras $\mbb C \mcal G\langle X_j \rangle, j\in J$ are traffic independent in $(\mbb C \mcal G\langle \bigsqcup_j X_j \rangle, \star_j \tau_j)$.
\end{Lem}

This fact is proved in \cite[Proposition 2.14]{Male2011}, based only on the definition of traffic independence in terms of the injective trace. The proof of Theorem \ref{Equivalence Free product free independence} is somehow a strengthening of this proof, and now the lemma is actually a direct consequence of the new characterization of traffic independence.
\begin{proof} 
Let $h = T_g(t_1 \otimes \dots \otimes t_L)$ be an alternated bigraph operation in reduced elements labeled in $\bigsqcup_j \mbb C \mcal G \langle X_j \rangle$ and let us prove that $\tilde \tau[h]=0$. Let $\tilde t_1\otimes \dots \otimes \tilde t_L$ be the tensor product of elements labeled in $\bigsqcup_j X_j $ obtained as follow: for each graph $t_\ell$, we replace each edge by the linear combination of the graphs that appear on their labels. By definition of $\tilde \tau$, we have $\tilde \tau[h] = \tau [ \tilde h ]$ where $\tilde h=T_g(\tilde t_1 \otimes \dots \otimes \tilde t_L) $. Moreover, $\tilde h$ is still an alternated bigraph operation in reduced elements. By Corollary \ref{Cor:TraceTree}, we hence get $\tilde \tau[h]=0$.
\end{proof}

We can now define the free product of $\mcal G$-algebras. The map $g\mapsto Z_g$ is extended for $g$ by linearity for linear combinations of graph operations. 

\begin{Def}\label{Def:QuotProd}
For any family of $\mcal G$-algebras $(\mathcal{A}_j)_{j\in J}$, we denote by $\ast_{j\in J} \mathcal{A}_j$ the vector space $\mathbb{C}\mathcal{G}\langle \bigsqcup_{j\in J}\mathcal{A}_j \rangle$, quotiented by the following relations: for any $i\in J$, any $a_1\etc a_k  \in \mcal A_i$, $a_{k+1} \etc a_n \in \bigcup_{j\in J} \mcal A_j$, any $g$ in $\mcal G_{n-k+1}$ and any linear combination of graph operations $h$ in $ \mcal G_k$,
	\eq
		\lefteqn{Z_g(\cdot \overset{Z_{h}(a_1 \otimes  \cdots \otimes  a_{k})}{\longleftarrow} \cdot \otimes  \cdot \overset{a_{k+1}}{\leftarrow} \cdot \otimes \dots \otimes  \cdot\overset{a_n}{\leftarrow} \cdot)}\\
		& \sim & Z_g (Z_{h}(\cdot \overset{a_{1}}{\leftarrow} \cdot \otimes  \cdots \otimes  \cdot \overset{a_{k}}{\leftarrow} \cdot)\otimes  \cdot \overset{a_{k+1}}{\leftarrow} \cdot\otimes \dots\otimes  \cdot\overset{a_n}{\leftarrow} \cdot).
	\qe
	
\end{Def}

This relation implies that, for $a_1\etc a_K$ in a same algebra $\mcal A_i$,
	$$ Z_h(\cdot \overset{ a_1 } \leftarrow \cdot \otimes \dots \otimes \cdot \overset{ a_K} \leftarrow  \cdot) \sim ( \cdot \overset{ Z_h(a_1 \otimes \dots \otimes a_K)} \longleftarrow \cdot),$$
and in particular, an edge labeled by the unit $(\cdot \overset{ 1_{\mcal A}}\leftarrow \cdot)$ is equal to the graph with no edge $(\, \cdot\, )$. The other relations involving several algebras make the $\mcal G$-algebra structure of $\mathbb{C}\mathcal{G}\langle \bigsqcup_{j\in J}\mathcal{A}_j \rangle$ compatible with this quotient (similar to the proof of Lemma \ref{Lem:AssProdEns}). This allows to consider the $\mcal G$-algebra homomorphisms $V_j:\mathcal{A}_j \to \ast_{j\in J} \mathcal{A}_j$ given by the image of $a\mapsto (\cdot\overset{a}{\leftarrow} \cdot)$ by the quotient map.

The $\mcal G$-algebra $\ast_{j\in J} \mathcal{A}_j$ is the free product of the $\mcal G$-algebras in the following sense.
\begin{Prop}Let $\mathcal{B}$ be a $\mcal G$-algebra, and $f_j:\mathcal{A}_j\to \mathcal{B}$ a family of $\mcal G$-morphisms. There exists a unique $\mcal G$-morphism $\ast_{j\in J} f_j:\ast_{j\in J} \mathcal{A}_j\to  \mathcal{B}$ such that $f_j=(\ast_{j\in J} f_j)\circ V_j$ for all $j\in J$. As a consequence, the maps $V_j$ are injective.
\end{Prop}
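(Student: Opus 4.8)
The plan is to recognize $\mbb C\mcal G^{(2)}\langle \bigsqcup_{j} \mcal A_j\rangle$ as the \emph{free} $\mcal G$-algebra on the set $X := \bigsqcup_{j\in J}\mcal A_j$, so that the universal property we must prove splits into the universal property of this free object followed by a check that the defining relations are respected. First I would record the universal property of the free $\mcal G$-algebra: any set-map $\phi$ from $X$ to the underlying set of a $\mcal G$-algebra $\mcal B$ extends uniquely to a $\mcal G$-morphism, the extension sending a $2$-graph monomial $g$ with edges labelled $a_1\etc a_n$ (say $a_i\in \mcal A_{j(i)}$) to $Z_g\big(\phi(a_1)\otimes\cdots\otimes\phi(a_n)\big)$. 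Applying this to $\phi(a):=f_{j}(a)$ for $a\in\mcal A_j$ produces a $\mcal G$-morphism $\tilde F:\mbb C\mcal G^{(2)}\langle X\rangle\to\mcal B$.

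The key step is to show that $\tilde F$ kills the subspace of relations of Definition~\ref{Def:QuotProd}, so that it descends to $\ast_{j} f_j$ on the quotient. Fix such a relation, with $a_1\etc a_k$ in a common $\mcal A_j$. Its right-hand side is, as an element of the free $\mcal G$-algebra, the monomial $g\circ(g_1,I\etc I)$ with edges labelled $a_1\etc a_n$; hence $\tilde F$ of the right-hand side equals $Z_{g\circ(g_1,I\etc I)}\big(f_j(a_1)\otimes\cdots\big)$, which by the composition axiom $Z_g\circ(Z_{g_1}\otimes\Id\otimes\cdots)=Z_{g\circ(g_1,I\etc I)}$ equals $Z_g\big(Z_{g_1}(f_j(a_1)\otimes\cdots\otimes f_j(a_k))\otimes f_{j(k+1)}(a_{k+1})\otimes\cdots\big)$. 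Since $f_j$ is a $\mcal G$-morphism, $Z_{g_1}(f_j(a_1)\otimes\cdots\otimes f_j(a_k))=f_j\big(Z_{g_1}(a_1\otimes\cdots\otimes a_k)\big)$, and this is exactly $\tilde F$ evaluated on the left-hand side, which is a single edge carrying $b:=Z_{g_1}(a_1\otimes\cdots\otimes a_k)\in\mcal A_j$. Thus the two sides agree, $\ast_j f_j$ is well defined, and the identity $f_j=(\ast_j f_j)\circ V_j$ follows from $Z_I=\Id$. For uniqueness I would observe that $\ast_{j}\mcal A_j$ is generated as a $\mcal G$-algebra by $\bigcup_j V_j(\mcal A_j)$, since every generating monomial $(\cdot\overset a\leftarrow\cdot)$ is $V_{j}(a)$ and every class is obtained from these through the operations $Z_g$; two $\mcal G$-morphisms agreeing on a generating family and compatible with all $Z_g$ must coincide, which pins down $\ast_j f_j$.

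Finally, the injectivity of the $V_j$ is the part I expect to be genuinely hard, because it is \emph{not} formal: unlike for the universal property, there is in general no $\mcal G$-morphism $\mcal A_i\to\mcal A_{j}$ (for instance there is no unital ``augmentation'' $\mrm{M}_N(\mbb C)\to\mbb C$), so one cannot manufacture a retraction of $V_{j}$ abstractly from the universal property alone. Instead I would argue through a \emph{normal form}. The defining relations should be read as a rewriting rule that collapses any connected \emph{monochromatic} subgraph $Z_{g_1}(a_1\otimes\cdots)$ (all of whose edges carry labels from one $\mcal A_i$) to a single edge carrying the corresponding element of $\mcal A_i$; a monomial is \emph{reduced} when every maximal connected monochromatic subgraph is already a single edge. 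The plan is to prove, by a confluence (diamond-lemma) argument, that the reduced monomials span the quotient and are linearly independent there: collapses performed at two disjoint monochromatic pieces commute, and the collapse of a single monochromatic piece is independent of the order in which it is carried out by the associativity and equivariance axioms for $Z$ in the algebra $\mcal A_i$, so the reduced form of a monomial is well defined. The main obstacle is exactly establishing this confluence cleanly — in particular, that reducing a monochromatic piece yields a \emph{single}, well-defined element of $\mcal A_i$ independent of the chosen collapse sequence. Granting the normal form, $V_{j}(a)$ is the reduced monomial $(\cdot\overset a\leftarrow\cdot)$, and $a\mapsto(\cdot\overset a\leftarrow\cdot)$ is injective into the span of reduced monomials, whence $V_j$ is injective.
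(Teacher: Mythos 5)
Your existence and uniqueness arguments coincide with the paper's, which is in fact terser: it simply writes down the formula $\ast_{j\in J}f_j\big(Z_g(\cdot\overset{a_1}{\leftarrow}\cdot\otimes\ldots\otimes\cdot\overset{a_n}{\leftarrow}\cdot)\big)=Z_g\big(f_{j(1)}(a_1)\otimes\ldots\otimes f_{j(n)}(a_n)\big)$ and declares that it ``obviously respects the relation''. Your verification of that step — writing the right-hand side of the relation as the monomial $g\circ(g_1,I\etc I)$, invoking the composition axiom for the action, and then the $\mcal G$-morphism property of $f_j$ — is exactly the content hidden behind that ``obviously'', and is correct. The uniqueness argument via generation by $\bigcup_j V_j(\mcal A_j)$ is the paper's as well.

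Where you diverge is on the injectivity of the $V_j$, and there your proposal has a genuine gap: the confluence statement underlying your normal form (that collapsing a maximal monochromatic subgraph yields a single well-defined element of $\mcal A_i$, independently of the collapse sequence) is precisely the point you leave unproved, and it is the whole difficulty of that route. Moreover the route is heavier than necessary. Your claim that no retraction of $V_i$ can be manufactured from the universal property is too pessimistic: one does not need a $\mcal G$-morphism $\mcal A_j\to\mcal A_i$, only a single $\mcal G$-algebra $\mcal B$ receiving every $\mcal A_j$ with $\mcal A_i$ embedded. Such a $\mcal B$ exists, e.g.\ the (restricted) tensor product $\bigotimes_{j\in J}\mcal A_j$ with the componentwise action $Z_g\big((a_1\otimes a_1')\otimes\ldots\big)=Z_g(a_1\otimes\ldots)\otimes Z_g(a_1'\otimes\ldots)$, which satisfies the operad axioms factor by factor; the map $f_i:a\mapsto \mbb I\otimes\cdots\otimes a\otimes\cdots\otimes\mbb I$ is then an injective $\mcal G$-morphism because $Z_g(\mbb I\otimes\ldots\otimes\mbb I)=Z_{g\circ(0\etc 0)}(1)=\mbb I$ for connected $g$. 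Applying the already-proved universal property to this family gives $f_i=(\ast_{j\in J}f_j)\circ V_i$ with $f_i$ injective, hence $V_i$ injective — which is presumably what the paper means by ``as a consequence'', although it too omits the construction of the auxiliary target. So: either supply a target along these lines, or actually carry out the diamond-lemma argument; as written, the injectivity part of your proof is not complete.
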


\begin{proof}The existence is given by the following definition of $\ast_{j\in J} f_j$ on $\mathbb{C}\mathcal{G}^{(2)}\langle \bigsqcup_{j\in J}\mathcal{A}_j \rangle$:
$$\ast_{j\in J} f_j\big(g(\cdot \overset{a_1}{\leftarrow} \cdot, \ldots,  \cdot\overset{a_n}{\leftarrow} \cdot) \big)=g\big(f_{j(1)}(a_1), \ldots , f_{j(n)}(a_n)\big)$$
whenever $a_1\in \mathcal{A}_{j(1)},\ldots,a_n\in \mathcal{A}_{j(n)}$. It obviously respects the relation defining $\ast_{j\in J} \mathcal{A}_j$.

The uniqueness follows from the fact that $\ast_{j\in J} f_j$ is uniquely determined on $\bigcup_j V_j(\mathcal{A}_j)$ (indeed, $\ast_{j\in J} f_j(a)$ must be equal to $f_j(b)$ whenever $a=V_j(b)$) and that $\bigcup_j V_j(\mathcal{A}_j)$ generates $\ast_{j\in J} \mathcal{A}_j$ as a $\mcal G$-algebra.
\end{proof}

We now construct the free product of algebraic traffic spaces.

\begin{Prop}\label{Prop:DefiningTrafficSpace} Let $(\mcal A_j,\tau_j)_{j\in J}$ be a family of algebraic traffic spaces.  Let $\tilde \tau:\mathbb{C}\mathcal T\big\langle \mbb C \mcal G \langle \bigsqcup_{j\in J}\mathcal{A}_j \rangle \big\rangle\to \C$ be the unital linear map induced by $*_{j\in J}\tau_j:\mathbb{C}\mathcal T\langle \bigsqcup_{j\in J}\mathcal{A}_j \rangle\to \C$ as in the first paragraph of the section. Then $\tilde \tau$ respects the quotient structure of $\ast_{j\in J}\mcal A_j$. Still denoting the quotient map $\star_{j\in J}\tau_j: \mbb C \mcal T\langle \ast_{j\in J}\mcal A_j \rangle\to \mbb C$, we then get an algebraic traffic space $(\ast_{j\in J}\mcal A_j , \star_{j\in J}\tau_j)$ called the free product of the algebraic traffic spaces. Furthermore, we have $\tau_i=(\star_{j\in J}\tau_j)\circ V_i$, where $V_i$ is the canonical injective algebra homomorphism from $\mathcal{A}_i$ to $\ast_{j\in J}\mcal A_j$, and the $\mcal A_i$, $i\in J$, are traffic independent in $(\ast_{j\in J}\mcal A_j , \star_{j\in J}\tau_j)$.
\end{Prop}

\begin{proof}
Let $T \in \mathbb{C}\mathcal T\big\langle \mbb C \mcal G \langle \bigsqcup_{j\in J}\mathcal{A}_j \rangle \big\rangle$ such that an edge $e$ has label $(\cdot \overset{Z_{h}(a_1 \otimes  \cdots \otimes  a_{k})}{\longleftarrow} \cdot ) $, where $h = \sum_i a_i h_i$ is linear combination of  graph operations labeled in a same $\mcal A_i$. It suffices to prove that $\tilde \tau[T] = \tilde \tau[T_h]$, where $T_h = \sum_i a_i T_{h_i}$ with $T_{h_i}$ the graph obtained by replacing $e_1$ by the graph $h_i$ evaluated in $(\cdot \overset{a_{1}}\leftarrow \cdot \etc \cdot \overset{a_{k}}\leftarrow \cdot)$. But when decomposing $T$ and $T_h$ on $\mathbb{C}\mathcal T\big\langle \mbb C \mcal G \langle \bigsqcup_{j\in J}\mathcal{A}_j \rangle \big\rangle$ according to the direct sum of Lemma \ref{Lem:DSD}, we get the same coefficient on $\mbb C (\cdot)$. Since the $\mcal G$-subalgebra are independent, $\tilde \tau[T]$ and $\tilde \tau[T_h]$ are equal to these constants and so they are equal. Hence $\tilde \tau$ respects the quotient structure defining $\ast_{j\in J}\mcal A_j$. 
\end{proof}

\subsection{Definition of positivity and traffic spaces}\label{Sec:DefPos}

We first define an analogue of $^*$-algebras. On the set of graph operations $\mcal G$, we define an \emph{involution} $t:g\to g^t$ , where $g^t$ is obtained from $g$ by reversing the orientation of its edges and interchanging the input and the output.

\begin{Def}\label{Def:G*Alg} A $\mcal G^*$-algebra is a $\mcal G$-algebra $\mcal A$ endowed with an anti-linear involution $*:\mathcal{A}\to \mathcal{A}$ which is compatible with the action of $\mcal G$, in the following sense: for all $K$-graph operation $g$ and $a_1,\ldots,a_K\in \mcal A$, $\big(Z_g(a_1\otimes \ldots\otimes a_K)\big)^*=Z_{g^t}(a_1^*\otimes \ldots\otimes a_K^*)$. A $\mcal G^*$-subalgebra is a $\mcal G$-subalgebra closed by adjoint. A $\mcal G^*$-morphism between $\mcal A$ and $\mcal B$ is a $\mcal G$-morphism $f:\mcal A \to \mcal B$ such that $f(a^*)=f(a)^*$ for any $a\in \mcal A$.

\end{Def}

Recall that for any $n\geq 1$, a $n$-graph monomial is a test graph with the data of a $n$-tuple of vertices. Let $g,g'$ be two $n$-graph monomials labeled in some set $\mathcal{A}$. We set $g|g'$ the test graph obtained by merging the $i$-th output of $g$ and $g'$ for any $i=1 \etc n$. We extend the map $(g, g') \mapsto t|t'$ to a bilinear application $\mbb C \mcal G^{(n)}\langle \mathcal{A} \rangle^2 \to \mbb C \mcal T\langle \mathcal{A} \rangle$. Note that one can also realize $g|g'$ as a bigraph operation evaluated in $g\otimes g'$, see Figure \ref{Fig:18}.

  \begin{figure}
    \begin{center}
     \includegraphics{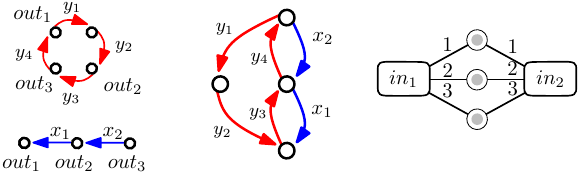}
    \end{center}
    \caption{Left: two 3-graph monomials $t$ and $t'$. Middle: the test graph $t'|t'$. Right: the bi-graph operation $g$ such that $t'|t' = T_g(t\otimes t')$.}
    \label{Fig:18}
  \end{figure}

Assume moreover that $\mcal A$ is endowed with an anti-linear involution $*:\mathcal{A}\to \mathcal{A}$. Given an $n$-graph monomial $g = (V,E,\gamma, \mbf v)$ we set $g^\dagger=(V,E^\dagger,\gamma^\dagger,\mbf v)$, where $E^\dagger$ is obtained by reversing the orientation of the edges in $E$ and with $\gamma^\dagger$ given by $e\mapsto \gamma(e)^*$. Note that for $n=2$, $g^\dagger(a_1\otimes \cdots \otimes a_n)\neq g^t(a_1^*\otimes \cdots \otimes a_n^*)$ since there is no inversion of the two outputs in the definition of $g^\dagger$ as in Definition \ref{Def:G*Alg}. We extend the map $g \mapsto g^\dagger$ to an anti-linear map on $\mbb C \mcal G^{(n)}\langle \mathcal{A} \rangle$.

\begin{Def}   

\label{Def:Positivity}A \emph{traffic space} is an algebraic traffic space $(\mcal A,\tau)$ such that:
\begin{itemize}
	\item $\mcal A$ is a $\mcal G^*$-algebra, 
	\item the combinatorial trace on $\mcal A$ satisfies the following \emph{positivity condition} :
for any $n\geq 1$ and any $n$-graph polynomials $g$ labeled in $\mcal A$, 
	\eqa\label{eq:NonNegCond}
		 \tau\big[ g|g^\dagger \big] \geq 0.
	\qea
	We call $\tau$ a combinatorial state.
\end{itemize}
A homomorphism between two traffic spaces is a $\mcal G^*$-morphism which is a homomorphism of algebraic traffic space. 
\end{Def}
Note that \eqref{eq:NonNegCond} for $n=2$ is equivalent to the positivity of the trace $\Phi$ induced by $\tau$ on the $*$-algebra $\mcal A$. Moreover, \eqref{eq:NonNegCond} for $n=1$ implies the positivity of the anti-trace $\Psi$ (Definition \ref{Def:TraceAndAntiTr}): indeed we have $\Psi[aa^*] = \tau\big[ g|g^\dagger \big]$ where $g$ is the 1-graph monomial with one simple edge whose source is the output.

 By consequence, every traffic space $(\mcal A, \tau)$ have two structures of $^*$-probability space $(\mathcal{A}, \Phi)$ and  $(\mathcal{A}, \Psi)$  (endowed with the product  $Z_{\cdot \overset{1}{\leftarrow} \cdot \overset{2}{\leftarrow} \cdot}$). Positivity of $\tau$ implies the Cauchy Schwarz inequality $|\tau[t_1|t_2]| \leq \sqrt{ \tau[t_1|t_1^\dagger] \tau[t_2 | t_2^\dagger]}$.

\begin{Ex}(Example \ref{Ex:MtSpa} continued)
The algebraic traffic space of random matrices is actually a traffic space since $\tau_N$ is positive. Indeed, recall that in Remark \ref{Rk:BigraphOp} for any $n$-graph monomial $t$ labeled in $J$ and a family $\mbf A_N=(A_j)_{j\in J}$ we have defined a random tensor matrix $t(\mbf A_N) \in (\mbb C^N)^{\otimes n}$. The positivity is clear since one has
	\eq
		 \tau_N \big[ (t| t^\dagger)(\mbf A_N)  \big] :=\esp \Big[ \frac 1 N \sum_{\mbf i\in [N]^n} t(\mbf A_N)_{\mbf i}\, \overline{t(\mbf A_N)}_{\mbf i}\Big]\geq 0
	\qe
\end{Ex}
\begin{Ex}\label{Ex:Graphs2}(Example \ref{Ex:MtSpaGraph} continued) The algebraic traffic space of a unimodular random graph is also a traffic space. As in the previous example, for a test graph $t$ and a family $\mbf A$ of infinite matrices we define an infinite tensor matrix $t(\mbf A)$ as in Remark \ref{Rk:BigraphOp1} but with summation over $k:V \to \mcal V$ with $k(r)=\rho$, for an arbitrary vertex $r$ of $V$ and with $(\xi_i)_{i\in \mcal V}$ the canonical basis of $\mbb C^\mcal V$. The positivity of $\tau$ follows as well since 
	$$\tau_\rho\big[ t(\mbf A)| t(\mbf A)^\dagger \big] :=\esp \big[  \sum_{\mbf i\in \mcal V^n:i_1=\rho} t(\mbf A)_{\mbf i}\, \overline{t(\mbf A)}_{\mbf i}\big]\geq 0.$$
\end{Ex}

We see now a consequence of the positivity, which will be an additional motivation for Part II. Let $(\mcal A, \tau)$ be a traffic space and let $a_1\etc a_n\in \mcal A$ such that $\Phi(a_1\dots a_n) \neq 0$. Denote by $T_1$ the oriented simple cycle with $n$ edges labeled $\cdots \overset{a_i}\leftarrow \cdot  \overset{a_{i+1}}\leftarrow \cdots  $ along the cycle. Let $t_1$ be a $1$-graph monomial with test graph $T_1$ and whose output is an arbitrary vertex. With $(\cdot)$ denoting the 1-graph monomial with no edge, we have 
	$$\Phi(a_1 \dots a_n) =\tau[T_1] = \tau[ t_1 | (\cdot) ]\neq 0.$$
Then, since $\tau$ is positive, the Cauchy-Schwarz inequality gives 
	$$\Big|\tau\big[ t_1 | (\cdot) \big]\Big|^2 \leq \tau[ t_1 | t_1^\dagger] \times \tau\big[ (\cdot) | (\cdot)\big] = \tau[ t_1 | t_1^\dagger].$$
Hence the test graph $T_2=t_1|t_1^\dagger$ satisfies $\tau[T_2]\neq 0$. It consists in two simple cycles that share exactly one vertex. We iterate, assuming we have a test graph $T_n$ such that $\tau[T_n]\neq 0$. Let $t_n$ be a $1$-graph monomial with test graph $T_n$ and output an arbitrary vertex. Then $T_{n+1} = t_n | t_n^\dagger$ satisfies $\tau[T_{n+1}]\neq 0$. We have proved the following.

\begin{Lem} Let $(\mcal A, \tau)$ be a traffic space such that $\tau$ is not constant to zero. Then $\tau$ is nonzero on an infinite number of \emph{cacti}, that are test graphs such that each edge belong to a unique cycle (see Part II).
\end{Lem}

In the second part of the monograph, given a non-commutative probability space $(\mcal A, \Phi)$ we construct a traffic space $(\mcal B, \tau)$ such that $\mcal B$ contains $\mcal A$ and the trace associated to $\tau$ and restricted on $\mcal A$ is $\Phi$. The lemma shows that the naive answer for this question, 
\begin{itemize}
	\item  $\tau[ T] = \Phi(a_1 \dots a_n)$ if $T$ is an oriented simple cycle with consecutive edges $a_1\etc a_n$,
	\item  $\tau[T]=1$ for the test graph with no edge,
	\item and $\tau[T]=0$ otherwise,
\end{itemize}
does not yield a positive combinatorial trace. There are no matrices converging to a traffic with such a simple distribution.

\subsection{Positivity of the free product}\label{Sec:PosFreeProd}

For each $j\in J,$ let $(\mcal A_j, \tau_j)$ be a traffic space. By Section \ref{Sec:FreeProdAlg}, we can consider the algebraic traffic space $(\ast_{j\in J}\mcal A_j , \star_{j\in J}\tau_j)$, the free product of the $(\mcal A_j, \tau_j)$'s. We shall now prove that $\tau:=\star_{j\in J}\tau_j$ satisfies the positivity condition \eqref{eq:NonNegCond}. Therefore, we give in Lemma \ref{Lem:Structure} a structural result for the canonical space $\mbb C \mcal G^{(n)}\langle \bigsqcup_{j\in J} \mcal A_j \rangle $, introduced in Definition \ref 
{Def:nGraphPol}. The ideas of the current section are inspired by  the counterpart of this construction for  the free product of unital algebras with identification of units (see \cite[Chapter 6]{NS} and \cite[Formula (6.2)]{NS}). The proofs build on  the preliminary 
material  presented in Section \ref{seq:Equivalence Free product free independence}.

\begin{Def} Let us consider for $n\geq 1$ a colored bigraph operation $g\in \mcal B_{col}^{(n)}$ (Definition \ref{Def:ColBigraph}). A bijection  of  the vertex set of $g$  is called an automorphism of $g$ if  it  preserves the adjacency, the bipartition, the ordered set of outputs and the coloring of $g$. Their set forms a group denoted  $Aut_{g}$ that acts on $ \mcal B_{col}^{(n)}$ and on the subspace ${\mcal B}^{(n)}_{alt}$ of  alternated colored bigraph operations with $n$ outputs.  The quotient space is denoted by  $\tilde{\mcal B}^{(n)}_{col}$ (resp. $\tilde{\mcal B}^{(n)}_{alt}$) and the equivalent class of  a  colored bigraph operations $g\in \mcal B_{col}^{(n)}$ is denoted by  $\bar g.$ 
\end{Def}
See figure \ref{Fig:09} for an example. Note that an automorphism does not necessarily respect the ordering of the inputs nor the ordering of the neighbor connectors. 
 
  Every $\sigma \in Aut_{g}$ and every $g$-alternated tensor product 
  $\mbf m=(m_1\otimes \dots \otimes m_L)$ of graph monomials induces a new $g$-alternated tensor product $\mbf m_\sigma=(m_{1,\sigma}\otimes\cdots \otimes m_{L,\sigma})$, such that $T_{\sigma}(\mbf m)=T_{g}(\mbf m_\sigma)$ by reordering the labels of the inputs and of neighbor connectors as follow (see Figures \ref{Fig:09} and \ref{Fig:10}):
  \begin{itemize}
  	\item if $\ell_v$ denotes the order of the input vertex $v$ of $g$, then $m_{\ell_v,\sigma} = m_{\ell_{\sigma^{-1}(v)}}$,
	\item the order of neighbor connectors of an input of $m_{\ell, \sigma}$ is the order of its pre-image by $\sigma$.
\end{itemize}
  
    We extend this definition by linearity for graph polynomials. Note that we have the property $(\mbf t_{\sigma_1})_{\sigma_2}=\mbf t_{\sigma_2 \sigma_1}$ for all $\sigma_1, \sigma_2 \in Aut_{g}$. For every alternated bigraph operation $g$, the space $W_{\bar g}$ spanned by $T_g(\mbf t)$ for $\mbf t$ reduced and $g$-colored does not depends on $g$ but only on the class $\bar g \in \tilde {\mcal B}_{alt}^{(n)}$.

  \begin{figure}
    \begin{center}
     \includegraphics{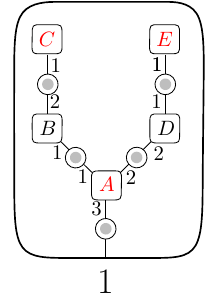}
    \end{center}
    \caption{The above colored bigraph operation $(g,\gamma)$ has a single non trivial automorphism $\sigma$, corresponding to vertical mirror symmetry.}
    \label{Fig:09}
  \end{figure}

  \begin{figure}
    \begin{center}
     \includegraphics{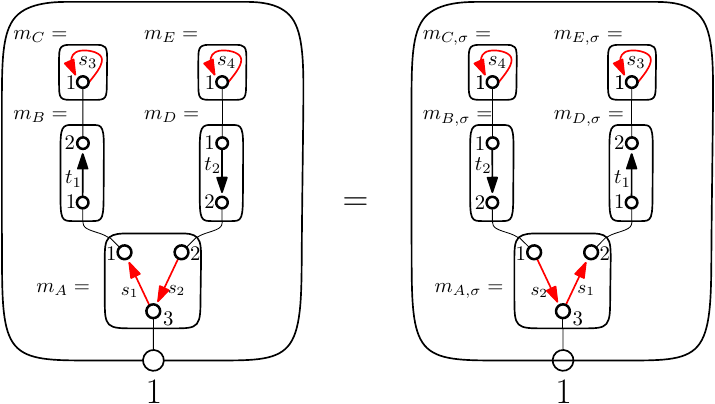}
    \end{center}
    \caption{Illustration of the equality $T_g(\mbf m) = T_g(\mbf m_\sigma)$. With $g$ the colored bigraph operation of Figure 7 and $(m_A \otimes \dots \otimes  m_E)$ a $g$-colored tensor product of graph monomials, then $(m_A \otimes \dots \otimes  m_E)_\sigma$ is obtained by exchanging $m_B$ and $m_D$, $m_C$ and $m_E$, and by permuting the outputs 1 and 2 of $m_A,m_B$ and $m_D$.}
    \label{Fig:10}
  \end{figure}

\begin{Lem}\label{Lem:Structure} Let $(\mcal A, \tau)$ be a traffic space and $\mcal A_j$, $j\in J$, be independent $\mcal G$-subalgebras.
\begin{enumerate}
	\item When considering the  non-negative Hermitian form $ \tau\big[ \, \cdot \, |\, \cdot \, ^\dagger \big]$ defined in  \eqref{eq:NonNegCond}, the space of graph-polynomials admits the orthogonal decomposition
		\eq
		\mbb C \mcal G^{(n)}\langle \bigsqcup_{j\in J} \mcal A_j \rangle 
		 & = & \mbb C \, (\cdot)  \oplus^\perp \bigoplus^{\ \ \ \perp}_{\substack{\overline g\in \tilde{\mcal B}^{(n)}_{alt}   }} \mcal W_{g}.
	\qe

	\item If $g$ is not a tree, then $W_{\bar g}$ is included in the kernel of $ \tau\big[ \, \cdot \, |\, \cdot \, ^\dagger \big]$, that is for any $h\in  W_{\bar g}$ and $h'\in \mbb C \mcal G^{(n)}\langle \bigsqcup_{j\in J} \mcal A_j \rangle, $  $\tau[h|h']=0$. 
	
	\item 
	If $g$ is a tree, then for any $h= T_g(t_1\otimes \dots \otimes t_L)$, 
	$h'= T_g(t'_1\otimes \dots \otimes t'_L)$ in $W_{\bar g}$, we have 
$$\tau[h|h']= \sum_{\sigma\in Aut_{g}}\tau[t_1|t'_{1,\sigma}] \cdots \tau[t_L|t'_{L,\sigma}].$$
\end{enumerate}
\end{Lem}

\begin{Ex} With $g$ consisting in a single path between two outputs, the only automorphism of $g$ is the identity, and we then get the following formula: for any $L,L'\geq 2$, any $j_1\neq  j_2 \neq \dots \neq j_L$ and $j'_1\neq  j'_2 \neq \dots \neq j'_{L'}$ in $J$, and any $a_{j_\ell}\in \mcal A_{j_\ell}$, $a'_{j'_\ell}\in \mcal A_{j'_{\ell'}}$, $\ell=1\etc L$, $\ell'=1\etc L'$, one has
	\eq
		\lefteqn{\Phi\Big( \big( a_{j_1} - \Delta(a_{j_1}) \big) \dots \big( a_{j_L} - \Delta(a_{j_L}) \big) \times  \big( a'_{j'_{L'}} - \Delta(a'_{j'_{L'}}) \dots \big( a'_{j'_1} - \Delta(a'_{j'_1}) \big) \Big) =}\\
		& = & \one(L=L', j_\ell = j'_\ell \, \forall \ell=1\etc L) \prod_{\ell=1}^L \Phi\Big(  \big( a_{j_\ell} - \Delta(a_{j_\ell}) \big)  \times  \big( a'_{j_\ell} - \Delta(a'_{j_\ell}) \big)  \Big).
	\qe
With $g$ the colored bigraph operation of Figure \ref{Fig:09}, the automorphisms of $g$ are the identity and the vertical mirror symmetry: hence for any $h=g\big(t_A \otimes \dots \otimes t_E\big)$ where $(t_1 \otimes \dots \otimes t_E)$ reduced and $g$-colored, one has 
	$$\tau[ h | h] = \tau\big[t_A| t_A] \dots \tau[t_E| t_E] +  \tau[t_A| \tilde t_A]  \tau[t_B| \tilde t_D] \tau[t_D| \tilde t_B]\tau[t_C|   t_E]^2 ,$$
	where $\tilde t_X$ is obtained from $t_X$ by permuting outputs 1 and 2 for $X\in \{A,B,D\}$.
\end{Ex}

\begin{proof}[Proof of Theorem \ref{Th:PosFreeProd}] Assuming  Lemma \ref{Lem:Structure} for now, let us deduce Theorem \ref{Th:PosFreeProd}.  By Corollary \ref{Cor:Monom}, it suffices to prove that $\tau[ h|h^\dagger]\geq0$ for each finite combination $h = \sum_{i} \beta_i T_{g_i}(\mbf t^i)$ for 
bigraph operations $g_i$ and tensor products of reduced polynomials $\mbf t^i = t_1^i\otimes \dots \otimes t_{L_i}^i,$ where $t_\ell^i = p(m_\ell^i)$ with a graph monomial $m_\ell^i$. Moreover the previous lemma allows to restrict 
our consideration to the case where all $g_i$ are in the equivalent class of one particular colored tree $g$ and the color of $m_\ell^i$ depends only on $\ell$, not on $i$.

   In particular, the automorphism group of colored graph $Aut_{g_i}$ is equal to $Aut_g$ for any $i$.  With this notation at hand, we can write
\eq
	 \tau\Big[h |h^\dagger\Big] & = & 
	 \sum_{ij}\beta_i\bar{\beta}_j\tau\big[T_{g}(\mbf t^i)\big|T_{g}({\mbf t^j}^\dagger)\big]\\
&=&
	\frac{1}{\sharp Aut_{g }}\sum_{\substack{ i,j \\ \sigma\in Aut_{g}}}\beta_i\bar{\beta}_j \tau\big[T_{g}(\mbf t_\sigma^i)\big|T_{g}({\mbf t_\sigma^j}^\dagger)\big],\\
&=&
	\frac{1}{\sharp Aut_{g }}\sum_{\substack{ i,j \\ \sigma,\sigma'\in Aut_{g }}}\beta_i\bar{\beta}_j   \tau    \big[t_{1,\sigma}^i  \big|{t^j_{1,\sigma'\circ \sigma}}^\dagger\big] \cdots \tau\big[ t_{L,\sigma}^i  \big|  {t^j_{L,\sigma'\circ 
\sigma}}^\dagger\big],\\
&=&
	\frac{1}{\sharp Aut_{g }}\sum_{\substack{ i,j \\ \sigma,\sigma'\in Aut_{g }}}\beta_i\bar{\beta}_j\tau \big[t_{1,\sigma}^i  \big|  {t^j_{1,\sigma'}}^\dagger\big] \cdots \tau\big[ t_{L,\sigma}^i \big| {t^j_{L,\sigma'}}^\dagger\big].\\
\qe
We shall now see that the r.h.s. is nonnegative. First, for any $\ell=1\etc L$, the matrices $\big(\tau[t_{\ell,\sigma}^i|{t^j_{\ell,\sigma'}}^\dagger]\big)_{(i,\sigma),(j,\sigma')}$  are non-negative  since $\tau$ is non-negative on each $\mcal G$-
subalgebra $\mcal A_j$. Moreover, their entrywise product $\big(\tau[t_{1,\sigma}^i|{t^j_{1,\sigma'}}^\dagger] \cdots \tau[ t_{L,\sigma}^i|{t^j_{L,\sigma'}}^\dagger]\big)_ {(i,\sigma),(j,\sigma')} $ is also non-negative (\cite[Lemma 6.11]{NS}). This 
yields the positivity of above right-hand-side.\end{proof}

\begin{proof}[Proof of Lemma \ref{Lem:Structure}] According to Lemmas \ref{Lem:DSD} and Corollary \ref{Cor:Monom}, in order to prove any of these three statements, it is enough to consider $\tau[h|h'],$ where  $h=T_g(\mbf t)$ and $h'=T_{g'}(\mbf t')$, with  
$g,g' \in 
\mcal B_{alt}^{(n)},$ $\mbf t=t_1\otimes \dots \otimes t_L$ a $g$-colored tensor product, $\mbf t' = t'_1\otimes 
\dots \otimes t'_{L'}$ a $g'$-colored tensor product, such that for each $\ell=1\etc L, \ell'=1\etc L',$ $t_\ell = p(m_\ell), t'_{\ell'} = p(m'_{\ell'}), $ where $m_\ell$ (respectively  $m'_{\ell'}$) is $n_\ell$-graph monomial (respectively a $n'_{\ell'}$-graph monomial) whose outputs are pairwise distinct. It suffices to prove that $\tau[h|h']=0$ if $g$ or $g'$ is not a tree and if $g$ and $g'$ do not belong to the same class of alternated colored bigraph operations, and to prove the formula of the third statement.

 Assume that the integers $\ell, \ell'$ such that $n_\ell, n_{\ell'} = 1$ are $\{1\etc K\}$ and $\{1\etc K'\}$ respectively. For any multi-index $(\mbf i, \mbf i') =(i_1\etc i_{K},i'_1 \etc i'_{K'} )$ in $\{0,1\}^{K+K'}$, let $h_{\mbf i, \mbf i'}$ be the graph polynomial $T_g(\tilde t_1\otimes \dots \otimes \tilde t_L)|T_g(\tilde t'_1 \otimes \dots \otimes \tilde t'_{L'})$ where
 \begin{itemize}
 	\item $\tilde t_\ell = t_\ell$ if $\ell>K$, 
	\item $\tilde t_\ell = m_\ell$ if $\ell\leq K$ and $i_\ell=0$, 
	\item $\tilde t_\ell =  (\cdot)$ if $i_\ell=1$,
\end{itemize}
and $\tilde t'_{\ell'}$ is defined similarly, so that
	\eqa\label{Eq:ProofPositivity}
	 h | h' = \sum_{(\mbf i, \mbf i')\in \{0,1\}^{K+K'}} \prod_{\substack{ \ell=l \etc K\\ \mrm{s.t.} \ i_k = 1}} \big(-\tau[t_k] \big)\times \prod_{\substack{k'=1 \etc K'\\ \mrm{s.t.} \ i_{k'} = 1}} \big( -\tau[t_{k'}]\big) \times h_{\mbf i, \mbf i'}.
	\qea
	
We can apply Lemma \ref{Lem:RedEffect} to each graph polynomial $h_{\mbf i, \mbf i'}$. Denote by $g_{\mbf i}$ and $g'_{\mbf i'}$ the colored bigraph operations obtained by erasing $1$-graph monomials such that $i_\ell=1$ and $i'_{\ell'}=1$ respectively, and $g_{\mbf i} | g'_{\mbf i'}$ the bigraph operation obtained by identifying the $i$-th outputs of $g_{\mbf i}$ and $g'_{\mbf i'}$ for any $i=1\etc n$. Denote by $(\tilde t_1\otimes \dots \otimes \tilde t_L \otimes\tilde t'_1\otimes \dots \otimes \tilde t'_{L'})_{\mbf i, \mbf i'}$ the tensor product where $1$-graph monomials are discarded when $\ell\leq K$ and $i_\ell=1$ and $\ell'\leq K'$ and $i'_{\ell'}=1$.
Then we have the identity
	$$h_{\mbf i, \mbf i'} = T_{g_{\mbf i}|g'_{\mbf i'}}(\tilde t_1\otimes \dots \otimes \tilde t_L \otimes  \tilde t'_1\otimes \dots \otimes \tilde t'_{L'})_{\mbf i, \mbf i'},$$
and this graph polynomial satisfies the assumptions of Lemma \ref{Lem:RedEffect}. Hence, we get
	\eqa\label{ProofPos1}
		 \tau\big[ h_{\mbf i, \mbf i'} \big] = \sum_{ \substack{ \pi \in \mcal P(V_{\mbf i, \mbf i'}) \\ \mrm{solid}}} \tau^0\big[ T_{\mbf i, \mbf i}^\pi\big],
	\qea
where we denote
\begin{itemize}
	\item the graph monomial 	$T_{\mbf i, \mbf i'} = T_g(\tilde m_1\otimes \dots \otimes \tilde m_L)|T_g(\tilde m'_1 \otimes \dots \otimes \tilde m'_{L'}),$ with $\tilde  m_\ell$ defined as $\tilde t_\ell$ with $m_\ell$ instead of $t_\ell$ in the first case,
	\item $V_{\mbf i, \mbf i'}$ the vertex set of $T_{\mbf i, \mbf i'}$, 
	\item $\mcal O_\ell$ and $\mcal O'_{\ell'}$ are the sets of outputs of $m_\ell$ and $m'_{\ell'}$ respectively, seen in $V_{\mbf i, \mbf i'}$ for $\ell>K, \ell'>K'$,
	\item $\pi_{|\mcal O_{\ell}}$ and $ \pi_{|\mcal O_{\ell'}}$ the restriction of $\pi$ to these sets. 
\end{itemize}

Solidity is with respect to the graph monomials $m_1\etc m_L, m'_1\etc m'_L$ (out of $1$-graph monomials such that $i_\ell=i'_{\ell'}=1$). Note that these graphs are not the colored components of $T$, because of possible identifications between inputs of $g_\mbf i$ and $g'_{\mbf i'}$ that are neighbors of the outputs when forming $g_\mbf i | g_{\mbf i'}$, as in the third example of Figure \ref{Fig:05} of the previous section. 

We first assume that $g$ or $g'$ is not a tree and prove that a solid partition is not valid, so we will conclude that $\tau[h|h']=0$ for any $h'$, as we expect. Note that for any $\mbf i, \mbf i'$, we have that $g_{\mbf i}$ or $g_{\mbf i'}$ is not a tree. We apply Lemma \ref{Lem:CycleGCC} to $T_{\mbf i, \mbf i'}$, any partition $\pi$ solid w.r.t. the $m_\ell$'s and $m'_{\ell'}$'s, and a cycle $C$ on $\mcal G \mcal C \mcal C( T_{\mbf i, \mbf i'})$ coming from a simple cycle of $g_{\mbf i}$. Note that $\mcal C$ is indeed simple since identifications with inputs of $g'$ do not change the cycle, see Figure \ref{Fig:11}. Solidity of the $m_\ell$'s implies that there is no possible identifications of connectors neighbouring  a same input on the cycle $\mcal C$. Hence $\pi$ cannot be valid. From now on, we shall assume that $g$ and $g'$ are trees.

  \begin{figure}
    \begin{center}
     \includegraphics{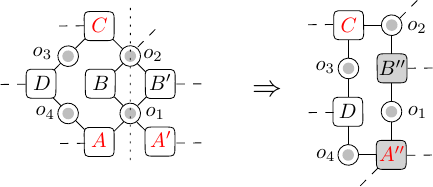}
    \end{center}
    \caption{ Left: a local detail of the bigraph operation $g_\mbf i|g'_{\mbf i'}$, with the vertical dotted line separating $g_\mbf i$ and $g'_{\mbf i'}$. Right: the bigraph operation $g_{alt, \mbf i, \mbf i'}$. The sequence $A, o_1, B, o_2, C, o_3, D, o_4$ forms a simple cycle in $g$. Going from $g_\mbf i|g'_{\mbf i'}$ to $g_{alt, \mbf i, \mbf i'}$, the inputs $A$ and $A'$ are identified. Yet, if a partition does not identify $o_1$ and $o_4$ in the leftmost picture, then it does not identify $o_1''$ and $o_4$ in the rightmost one.}
    \label{Fig:11}
  \end{figure}

Let us use now the centering of $1$-graph polynomials. Let $k=1\etc K$ be an index such that $i_k=0$ ($m_k$ is in $g_\mbf i$) and let $\pi$ be a partition of $V_{\mbf i, \mbf i'}$. We say that $m_k$ is \emph{isolated} by $\pi$ whenever no vertex of $m_k$ is identified with a vertex of another colored component except in the trivial way for a vertex of a neighboring component identified with the connector linking them. We say that $\pi$ is not isolating whenever no $m_k$ nor $m'_{k'}$ is isolated, for $k=1\etc K$ and $k'=1\etc K'$. By the multiplicativity property w.r.t. the colored components in the definition of traffic independence, for any valid partition $\pi$
	$$\tau^0[T_{\mbf i, \mbf i}^\pi] = \Bigg(\prod_{\substack{\ell =1\etc L  \\ \mrm{s.t. \ } m_\ell  \mrm{ \ isolated}}}  \tau^0[ T_{\ell}^{\pi_{|V_{\ell}}}] \times \prod_{\substack{\ell' =1\etc {L'}  \\  \mrm{s.t. \ } m'_{\ell'} \mrm{ \ isolated}}}  \tau^0[ {T'_{\ell'}}^{\pi_{|V'_{\ell'}}}] \Bigg)\times \tau^0[T_{\mbf j, \mbf j'}^{\pi_{|V_{\mbf j, \mbf j'}}}],$$
where $(\mbf j, \mbf')\in \{0,1\}^{K+K'}$ is defined by $j_\ell = 1$ if and only if $i_\ell=1$ or $m_\ell$ is isolated. Note that $\pi_{|V_{\mbf j, \mbf j'}}$ is not isolated. Hence, with the notations 
\begin{itemize}
	\item $\eps(\mbf i, \mbf i'):=\prod_{k=1}^{K}(-1)^{i_k}  \prod_{k'=1}^{K}(-1)^{i'_{k'}}$,
	\item $\alpha(\mbf i, \mbf i')  = \prod_{ k=1\etc K } \tau[t_{k}]^{i_k}\prod_{k'=1\etc K' } \tau[t'_{k'}]^{i'_{k'}}$,
\end{itemize}
we have
	\eqa
		\tau[h] & = & \nonumber
		\sum_{(\mbf i, \mbf i') }  
		\eps(\mbf i, \mbf i')  \times 
		\Bigg(
		\sum_{ \substack{ (\mbf j, \mbf j') \\ j_k \geq i_k \, \forall k \\ j'_{k'}  \geq i'_{k'}  \forall k'}}
		\alpha(\mbf i, \mbf i') \times \Big(
		\sum_{ \substack{ \pi \in \mcal P(V_{\mbf j, \mbf j'}) \\ \mrm{ solid}\\ \mrm{non \ isolating}}}\tau^0[T_{\mbf j, \mbf j'}] \Big) \Bigg)
		\\
		& =&   \nonumber
		\sum_{(\mbf j, \mbf j') }  
		\Bigg(  \sum_{ \substack{ (\mbf i, \mbf i')  \\ i_k \leq j_k \, \forall k \\ i'_{k'}  \leq j'_{k'} \, \forall k'}}
		 \eps(\mbf i, \mbf i')\Bigg)   
		 \times \alpha(\mbf j, \mbf j') \sum_{ \substack{ \pi \in \mcal P(V_{\mbf j, \mbf j'}) \\ \mrm{ solid}\\ \mrm{non \ isolating}}}\tau^0[T_{\mbf j, \mbf j'}^\pi]\\
		& = & \sum_{ \substack{ \pi \in \mcal P(V) \\ \mrm{ solid}\\ \mrm{non \ isolating}}}\tau^0[T^\pi], \label{ProofPos2}	
\qea
where $T=T_{g|g}(m_1\otimes \dots \otimes m_L \otimes m'_1\otimes \dots \otimes m'_{L'})$ and $V$ its vertex set. In words, the trace of $h$ is the sum of the injective traces of quotients of $T$ by solid and non isolating partitions.

On the other hand, we claim that the valid partitions of $T$ solid w.r.t. the $m_\ell$'s and $m'_{\ell'}$'s satisfy half of the primitivity property of Lemma \ref{Lem:Primitive}: two vertices $v$ and $w$ of $T_{\mbf i, \mbf i'}$ that come from $g_{\mbf i}$ (respectively from $g'_{\mbf i'}$) can be identified by a valid partition solid w.r.t. the $m_\ell$'s only in the trivial situation: they belong to a same colored component, or they belong to neighboring components and are identified with the vertex that belong to both the components. Indeed, let us assume conversely that $\pi$ is a solid partition that identifies $v$ and $w$. We apply as usual Lemma \ref{Lem:CycleGCC} to the graph $(g_{\mbf i})_{v \sim w}$, the induced partition, and the cycle given by a path between $v$ and $w$ in $g$. Solidity of the $m_\ell$'s implies that there is no possible identifications of connectors which are neighbors of  a same input, except possibly around $v\sim w$, see Figure \ref{Fig:12}. So $\pi$ is not valid except in the trivial case.

  \begin{figure}
    \begin{center}
     \includegraphics{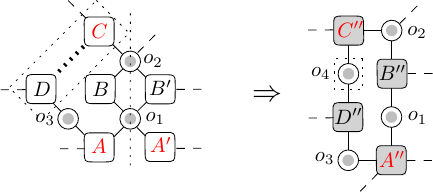}
    \end{center}
    \caption{ Left: the bigraph operation $g_\mbf i|g'_{\mbf i'}$ with the dot rectangle representing the identification of two vertices. Right: the graph of colored components of $(S_{\mbf i, \mbf i'})_{v\sim w}$. Identifications $o_4\sim_\pi o_3$ and $o_4 \sim_\pi o_3$ are possible since $o_4$ appeared while identifying $v$ and $w$, but other identifications $o_2 \sim_\pi o_1$ and $o_1 \sim_\pi o_3$ are not possible if they are not allowed in the leftmost graph.}
    \label{Fig:12}
  \end{figure}
  
We are now ready to prove that if $\tau[h] \neq 0$ then $g$ and $g'$ are isomorphic. Recall that we assume $g$ and $g'$ are trees. Let $\pi$ be a valid and solid partition which does not isolate $1$-graph monomials, as in Formula \eqref{ProofPos2}. Because of the argument of the previous paragraph, each $1$-graph monomial of $g$ must be identified with a single $1$-graph monomial of $g'$, which defines a bijection $\sigma$ between the leaves of $g$ and $g'$. We now show that 
\begin{itemize}
	\item for any $i_1, i_2=1\etc n$, the unique path from the $i_1$-th to the $i_2$-th outputs of $g$ is isomorphic to the unique path between the same outputs in $g'$.
	\item if $S$ and $S'$ are two leaves of $g$ and $g'$ such that $\sigma(S) = \sigma(S')$, then for any $i=1\etc n$, the unique path from $S$ to the $i$-th output of $g$ is isomorphic to the unique path from $S'$ to the same output in $g'$,
\end{itemize}
This clearly implies that $g$ and $g'$ must be isomorphic. For the first point, consider a simple path $\mcal D: o_1, S_1 \etc  S_Q, o_{Q+1}$ between two outputs in $g$ and the simple path $\mcal D':o'_1, S'_1 \etc  S'_{Q'}, o'_{Q'+1}$ in $g$ between the same outputs. We apply Lemma \ref{Lem:CycleGCC} to $T_{\mbf i, \mbf i'}$, a solid and valid partition $\pi$, and a cycle $\mcal C$ formed by identifying extremities of the paths. Denote by $j_q$ the color of $S_q$ and $j'_{q'}$ the one of $S'_{q'}$. The inputs $S_q, S'_{q'}$ different from $S_1,S'_1, S_Q,S'_{Q'}$ are components of $g$ or $g'$ and they are therefore solid in $\pi$. The inputs $S_1$ and $S'_1$ are identified $\mcal C$ if and only if $j_1 = j'_1$, and the same holds for the last inputs $S_Q$ and $S'_{Q'}$. Hence necessarily they are the only pairs of identification. If $Q\geq 2$ and $Q'\geq 2$, we can iterate this reasoning on the graph obtained from $S_{\mbf i, \mbf i'}$ with these two identifications, see Figure \ref{Fig:06}. Hence the two colored paths $\mcal D$ and $\mcal D'$ are isomorphic: one has $Q=Q'$ and $j_q=j'_q$ for any $q=1\etc Q$, and moreover the partition $\pi$  identifies pairwise $o_k \sim_\pi o'_k$ for any $k=1\etc K$. For the second point, the proof is the same with paths form the colored components to the outputs.
   
  \begin{figure}
    \begin{center}
     \includegraphics[width=100mm]{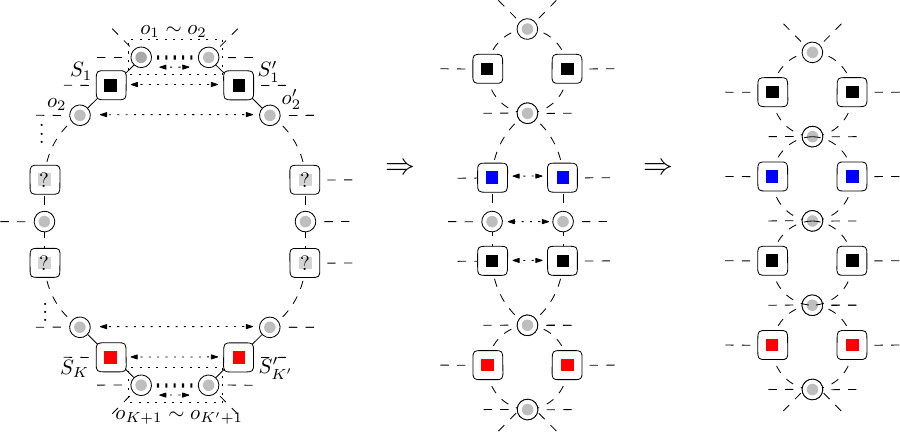}
    \end{center}
    \caption{Left: two paths from the same outputs that form a simple cycle in $g|g'$. The pair of extremal inputs must be of a same color if a quotient whose $\mcal G \mcal C \mcal C$ is a tree exists. Such a quotient must be a quotient of the graph with identifications $o_2\sim o_2'$ and $o_{Q}\sim o_{Q'}$, so we can iterate the reasoning.}
    \label{Fig:06}
  \end{figure}

Valid partitions identifying pairwise a connectors of $g$ with connectors of $g'$, the multiplicativity property in the definition of traffic independence yields the expected formula.
\end{proof}

\subsection{The tensor product of traffic spaces}

Let $J$ be an integer and for each $j=1\etc J$, let $(\mcal A_j, \tau_j)$ be an algebraic traffic space. We construct a traffic space $(\bigotimes_j \mcal A_j, \bigotimes_j \tau_j)$, that contain each traffic space $\mcal A_j$ and such that the $\mcal A_j$ commute. Their algebraic tensor product $\bigotimes_j \mcal A_j$ is indeed a $\mcal G$-algebra with action of $K$-graph operations 
	\eq
		\lefteqn{ Z_g\big( (a_{1,1} \otimes  \cdots \otimes   a_{1,J}) \otimes \cdots \otimes (a_{K,1} \otimes  \cdots \otimes   a_{K,J})\big) }
		\\ &=& Z^{(1)}_g(a_{1,1} \otimes \cdots \otimes a_{K,1}) \otimes \cdots \otimes  Z^{(J)}_g(a_{1,J} \otimes \cdots \otimes a_{K,K}),
	\qe
 for any $g\in \mcal G_K$ and any $a_{k,j} \in \mcal A_j$, where $Z^{(j)}$ denotes the action of graph operations on $\mcal A_j$, $j=1\etc J$. The tensor product of the combinatorial traces is defined, for any $T\in \mcal T\langle \bigotimes_j \mcal A_j \rangle$ whose edges are labeled by pure tensor products, by $\bigotimes_j\tau_j[T] = \tau_1[T_1] \cdots  \tau_J[T_J]$, where $T_j$ is obtained from $T$ by replacing a label $a_1\otimes \cdots \otimes \cdots a_J$ by $a_j$. 

We will need later the following lemma.

\begin{Lem}\label{Lem:TensorInj} The injective version of $\bigotimes_j\tau_j$ is given as follow. For any test graph $T\in \mcal T\langle \bigotimes_j\mcal A_j\rangle$, denote by $\Lambda_T$ the set of $J$-tuples $(\pi_1\etc \pi_J) \in \mcal P(V)^J$ such that if two elements belong to a same block of $\pi_i$ then they belong to different blocks of $\pi_j$ for some $j\neq i$. Then 
	$$\big(\bigotimes_j\tau_j\big)^0[T]  =  \sum_{(\pi_1\etc \pi_J) \in \Lambda_T} \tau_1^0[T_1^{\pi_1} ] \cdots \tau_J^0[T_J^{\pi_J}].$$
\end{Lem}

\begin{proof} We clearly have 
	\eq
		  \sum_{\pi \in \mcal P(V)} \Big( \sum_{(\pi_1\etc \pi_J) \in \Lambda_{T^\pi}} \prod_{j=1}^J \tau_j^0[T_j^{\pi_j}] \Big) & = &  \prod_{j=1}^J \sum_{\pi_j \in \mcal P(V)} \tau_j^0[T_j^{\pi_j}]  =  \bigotimes_j \tau_j[T].
		\qe
This implies the expected result by uniqueness of Mobi\"us transform.
\end{proof}

If the spaces are traffic spaces, i.e. if the maps $\tau_j$'s are positive, then their tensor product is also a traffic space by the usual argument of positivity of the Hadamard product \cite[Lemma 6.11]{NS}. 

\section{Conclusion of Part I and perspectives}\label{Sec:ConcluI}

Our initial motivation was to prove the positivity of traffic independence. This is indeed important since many statements about traffics involve the consideration of independent variables, for instance the law of large numbers and the central limit theorem. Doing so, we have discovered a natural characterization of traffic independence and understood more about the nature of this notion. In particular, Corollary \ref{Corfreeind} gives a tool to use traffic independence in order to prove the free independence of variables.

In this section we would like to emphasis an aspect which is present in this analysis we hardly mentioned: the notion of traffics, made initially for the analysis of large matrices, can be generalized to cover the case of tensor matrices of arbitrary order. Whereas a matrix $A=(A_{i,j})_{i,j=1}^N \in \mrm M_N(\mbb C) \sim \mbb C^N \otimes \mbb C^N$ is a collection of complex numbers labeled by two variables, a tensor matrix $A=(A_{\mbf i})_{\mbf i\in [N]^n} \in (\mbb C^N)^{\otimes n}$, $n\geq 1$, is labeled by a multi-index $\mbf i \in [N]^n$. 

\subsection{Operadic aspects: bigraph operations and wiring diagrams}

Recall that $\mcal B = \bigcup_{n\geq 0} \mcal B^{(n)}$ denotes the set of bigraph operations. It is actually a so-called \emph{colored operad}, encoding operations on objects of different kinds (the rank of the graph polynomials in the preceding sections). It is in bijection with the so-called operad of (undirected, connected) \emph{wiring diagrams}, initially introduced by Rupel and Spivak \cite{RuSp13} and developed in several variants in \cite{Sp13,Sp15,VaSpEu15}. They have an important potential to model and design complex systems in many different disciplines, such as computer science and cognitive neuroscience. They are applied to study open dynamical systems, certain differential equations, databases recursions, plug-and-play circuits, etc. See \cite{Yau15} for an extended presentation of wiring diagrams. Formulating the notion of traffic independence is then a new application of wiring diagrams.

  \begin{figure}
    \begin{center}
     \includegraphics{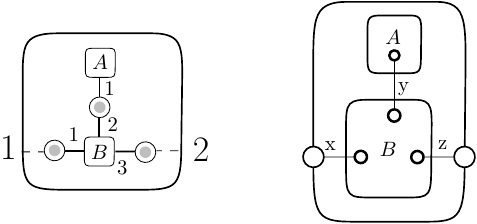}
    \end{center}
    \caption{On the left, a bi-graph operation $t$ , and on the right, the associated wiring diagram $t'$. The inputs of $t$ (resp. its outputs) are represented by internal boxes (resp. by the external box). Connectors of $t$ are represented by bonds between the boxes. Note that general wiring diagram are not assume to be connected.}
    \label{Fig:16}
  \end{figure}

As for graph operations, we can define a notion of algebra over the operad $\mcal B$ of bigraph operations (or equivalently of wiring diagrams), in short $\mcal B$-algebra. Since the operad is colored, a $\mcal B$-algebra is a graded vector space $\mcal A = \bigoplus_{n\geq 0} \mcal A^{(n)}$ endowed with an action of the elements of $\mcal B$ as follow. For any bigraph operation $g\in \mcal B^{(n)}_{L,\mbf d}$, i.e. with $L$ inputs and sequence of input degree $\mbf d=(d_1\etc d_L)$, there is a linear map $T_g : \mcal A^{(d_1)} \otimes \dots \otimes \mcal A^{(d_L)} \to \mcal A^{(n)}$ satisfying the following properties:
\begin{enumerate}
	\item For the bigraph operation $id_n\in \mcal B^{(n)}$ with $n$ distinct outputs, a single input of degree $n$ and such that the $i$-th neighbor of the input is the $i$-th output, one has $T_g= id_{\mcal A^{(n)}}$, 
	\item $T_g\big( T_{g_1} \otimes \ldots \otimes T_{g_L})=T_{g \,  (g_1,\ldots, g_L)}$, for any $g\in \mcal B^{(n)}_{L, \mbf d}$ and $g_\ell\in \mcal B^{(d_\ell)}, \ell=1\etc L$,
	\item $T_g(a_1 \otimes \dots \otimes a_L) = T_{g_\sigma}(a_{\sigma^{-1}(1)} \otimes \dots \otimes a_{\sigma^{-1}(L)})$ for any permutation $\sigma$ that only permute elements of same degree, where $g_\sigma$ is as $g$ with the $i$-th input becoming the $\sigma^{-1}(i)$-th input.
\end{enumerate}

The definition of Example \ref{Rk:BigraphOp} defines a structure of $\mcal B$-algebra on the space of random variables with values on $\bigoplus_{n\geq 0} (\mbb C^N)^{\oplus n}$.

Another operadic aspects of bigraph operations to be commented is about its relations with the work of Jones \cite{Jones} on planar algebras. Firstly, we can mention that the action of graph operations on tensor matrices appears in this paper as a slight generalization of \cite[Example 2.6]{Jones}. Recall that a \emph{combinatorial map}, i.e. a graph embedded into a surface such that the connected components of the complementary of the graph are isomorphic to discs. Then by a result of Heffter \cite{Heffter} a combinatorial map is a bigraph operation with no output such that each connector as degree two. It would be interesting to know if this set of observables is associated to some distributional symmetry of random tensor matrices.

\subsection{Definition of traffics of arbitrary ranks and their independence}
Most of the analysis provided for traffic spaces can be generalized \emph{in extenso} in the context of $\mcal B$-algebra. We briefly describe in this section this generalization. 

\begin{Def} An algebraic \emph{traffic space of arbitrary rank} is a pair $(\mcal A, \tau)$ where $\mcal A = \bigoplus_{n\geq 0} \mcal A^{(n)}$ is a $\mcal B$-algebra and $\tau$ is a linear form on $\mcal A^{(0)}$ such that $\tau\big[(.)\big] = 1$.
\end{Def}

The term \emph{algebraic} can be dropped provided $\tau$ satisfies the positivity property of Section \ref{Sec:DefPos}. Contrary to the rank two case, there is no need for associativity and multi-linearity properties for $\tau$ since there are encoded in the structure. An element of $\mcal A_n$ is called a traffic of rank $n$. Hence the theory of traffics is the theory of traffics of rank two. By Example \ref{Rk:BigraphOp}, the space of tensor matrices is a traffic space of arbitrary rank.

Notions of $\mcal B$-subalgebras and reduced elements are defined as for $\mcal G$-algebras. 

\begin{Def}\label{Def:FreeBalg} The $\mcal B$-subalgebras $\mcal A_1\etc \mcal A_L\subset \mcal A$ are said to be independent whenever any bigraph operation in alternated reduced elements is centered. 
\end{Def}

With minor modifications of the proof, we can prove the following extension of the asymptotic traffic independence theorem. Let $\mbf A_1^{(N)} \etc \mbf A_L^{(N)}$ be independent families of tensors, namely for each $\ell=1\etc N$, $\mbf A_\ell^{(N)}= ( A_{\ell,j})_{j\in J_\ell}$ where $A_j \in (\mbb C^N)^{\otimes n_j}$ for some $n_j\geq 1$. Assume the following properties:
\begin{enumerate}
	\item For each $\ell=1\etc L$, the family $\mbf A_\ell^{(N)}$ is permutation invariant in law in the sense that for any permutation $\sigma$ of $[N]$,
		$$\mbf A_\ell^{(N)} = ( A_{\ell,j})_{j\in J_\ell} \overset{\mcal L}=  \Big( A_{\ell,j}\big(\sigma(i_1) \etc \sigma(i_{n_j}) \big)_{\mbf i\in [N]^{n_j}} \Big)_{j\in J_\ell}.$$
	\item For any bigraph operation $g$ of rank 0, the quantity $\esp\big[ T_g\big( \mbf A_{\ell_1,j_1} \otimes \dots \otimes \mbf A_{\ell_K,j_K}\big)\big]$ converges as $N$ goes to infinity for any compatible $\mbf A_{\ell_k,j_k}$. 
	\item On has the asymptotic factorization property: 
		\eq
			\lefteqn{ \prod_{i=1}^n \esp\Big[ T_{g_i}\big(   A_{\ell_{1,i},j_{1,i}} \otimes \dots \otimes   A_{\ell_{K_i,i},j_{K_i,i}}\big)\Big]}\\
			& & - \esp\Big[ \prod_{i=1}^n T_{g_i}\big(   A_{\ell_{1,i},j_{1,i}} \otimes \dots \otimes   A_{\ell_{K_i,i},j_{K_i,i}} \big)\Big] \limN 0,
		\qe
for any $g_1\etc g_n$ and any tensors such that the evaluation makes sense.
\end{enumerate} 
Then the families $\mbf A_1^{(N)} \etc \mbf A_L^{(N)}$ are asymptotically independent. This can be proved in two steps analogous to the random matrices case. The first determines   the limit in terms of an analogue for the injective trace, following the argument of \cite{Male2011} for random matrices. The second   characterises  traffic independence in terms of the latter injective trace, replacing graph polynomials by elements of a $\mcal B$-algebra (let us stress emphasis that this proof does not rely on the $\mcal G$-algebra structure of traffic spaces).

\subsection{Potential perspectives}

Traffics of arbitrary ranks may be interesting to study new objects, e.g. simplicial complexes, in a similar fashion we study the non-commutative distributions of large random graphs.  Moreover, it could also open new perspectives,   for instance in the setting of Voiculescu's notion of bi-freeness \cite{Janus} . Let $(\mcal A, \Phi)$ be a $^*$-probability space such that $\Phi$ is a faithful state. Denote by $H$ the closure of $\mcal A$ for the Hilbert norm $a \mapsto \sqrt{ \Phi(aa^*)}$. There are two commuting actions of $\mcal A$ on $H$ given by left and right multiplications. Recall that the theory of bi-freeness is about the relations of free operators from these two points of view. Let now $(\mcal A, \tau)$ be a  traffic space of arbitrary rank. Assume that $\tau$ is positive and for any $a \in \mcal A^{(n)}$ one has $\|a\|_2:=\sqrt{\tau[a|a^\dagger]} =0$ implies $a=0$ (the definition of $a|a^*$ is the same as for $g|g^*$ in traffic spaces as it only involves bigraph operations). Consider the closure of $\mcal A^{(n)}$ by $\| \cdot \|_2$. Then we have now $n$ commuting actions of $\mcal A$, one in the direction of each output, and so we can develop a theory of \emph{multi-freeness} dedicated in the relations between these actions.

\newpage
\part{On the three types of traffics associated to non-commutative independences}\label{Sec:CanonicalExtension}
\section*{Presentation of Part 2}

In \cite{Male2011} three types of traffics were identified, one for each notion of the three non-commutative notions independence.
\begin{Def}Let $(\mcal A, \tau)$ be an algebraic traffic space and let $\mbf a=(a_j)_{j\in J}$ a family of elements of $\mcal A$. We said that $\mbf a$ is of
\begin{itemize}
	\item \emph{free type} if it is \emph{unitarily invariant}, in the sense that $\mbf a$ has the same traffic distribution as $u\mbf a u^*=(ua_ju^*)_{j\in J}$, where $(u,u^*)$ is traffic independent of $\mbf a$ and limit of $(U_N,U_N^*)$ for a Haar unitary random matrix $U_N$;
	\item \emph{Boolean type} if, for any $T\in \mcal T\langle \mcal Y\rangle$, one has $\tau[T] = 0$ if $T$ is not a tree;
	\item \emph{tensor type} if the traffics are diagonals, in the sense that $a_j=\Delta(a_j)$ for all $j\in J$.
\end{itemize}
\end{Def}
The precise link with the usual notions of independences is given by \cite[Theorem 5.5]{Male2011}:
\begin{itemize}
\item the traffic independence of traffics of \emph{free type} implies the \emph{free independence} with respect to the trace $\Phi$ of $(\mcal A, \tau)$;
\item the traffic independence of traffics of \emph{Boolean type} implies the \emph{Boolean independence} with respect to the anti-trace $\Psi$ of $(\mcal A, \tau)$;
\item the traffic independence of traffics of \emph{tensor type} implies the \emph{tensor independence} with respect to the trace $\Phi$ of $(\mcal A, \tau)$;
\end{itemize}

This section is mostly devoted to the study of traffics of free types. We give an explicit description of the injective distribution of traffics of this type, and we prove Theorems \ref{Th:Matrices} and \ref{Th:MatricesBis} about unitarily invariant random matrices.

More generally, we give, for each type of traffic, a characterization as a particular symmetry of the traffic distribution,  a characterization with respect to the injective distribution, and for any non-commutative probability space we construct a canonical traffic space containing the initial space as a subalgebra of traffics of each type, under mild assumptions. The whole picture is contained in Section~\ref{Sec:three_types_traffics}.

\section{Generalities on unitarily invariant traffics}\label{Sec:GeneralUnitTraffic}

The canonical construction of free type consists in proving that any tracial non-commutative probability space $(\mcal A, \Phi)$ can be realized as an algebra of unitarily invariant traffics. A crucial step is an explicit description of the distribution of traffics of this type, which is given in the two next sections.

\subsection{Cacti and non crossing partitions}

Recall that we call simple cycle of a graph a closed path visiting pairwise distinct vertices (orientation of the edges is ignored).

\begin{Def} A cactus is a finite connected graph such that each edge belongs exactly to one simple cycle. A well oriented cactus is a cactus such that the simple cycles of the graph are oriented. 
\end{Def}

  \begin{figure}[h!]
    \begin{center}
     \includegraphics[width=35mm]{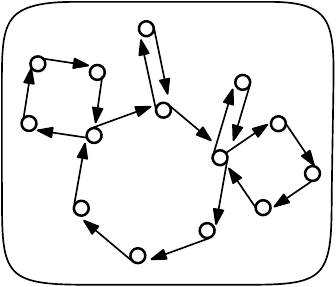}
    \end{center}
    \caption{A well oriented cactus.}
    \label{Fig:03}
  \end{figure}

 Well oriented cacti are related to non crossing partitions in the following way. Let $T$ be a test graph consisting in a simple cycle with consecutive edges  $(\cdot \overset{1}\leftarrow \cdot \dots \cdot \overset{n}\leftarrow \cdot)$. Let $\sigma$ be a non crossing partition of the set $E:=\{1\etc n\}$ of edges of $T$. Let us denote by $V=\{1' \etc n'\}$ the set of vertices of $T$, so that $i'$ is neighbor of $i$ and $i+1$ with notation modulo $n$. The \emph{Kreweras} complement $\hat \sigma$ of $\sigma$ is the largest partition of $V$ such that the partition $\sigma \sqcup \hat \sigma$ of $E\sqcup V$ is non crossing (with the convention $1< 1'< 2 <  \dots < n< n'$).
 
\begin{Lem}\label{Lem:CactiNCP} For any partition $\pi$ of $V$, the quotient $T^\pi$ is a well oriented cactus if and only if there exists $\sigma$ a non crossing partition of $E$ such that $\pi=\hat \sigma$.
\end{Lem}

  \begin{figure}[h!]
    \begin{center}
     \includegraphics[width=60mm]{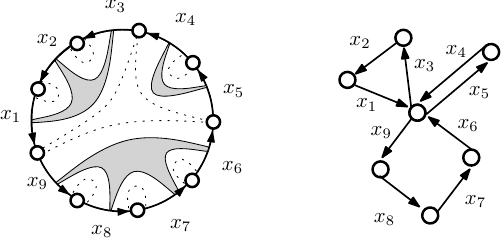}
    \end{center}
    \caption{Left: A cycle of length nine, a non crossing partition $\nu$ of its edges (grey) and the Kreweras complement $\pi$ (dotted) of $\nu$. Right: the quotient of the cycle by $\pi$.}
    \label{Fig:04}
  \end{figure}
The Kreweras operation $\sigma \mapsto \hat \sigma$ is a bijection $\mrm{NC}(n) \to \mrm{NC}(n')$. Hence the content of the lemma is unchanged if we replace the sentence "$\exists\, \sigma \in\mrm{NC}(n)$ such that $\pi = \hat \sigma$" by  "$\pi\in\mrm{NC}(n')$". There we emphasis the role of $\sigma$ since this is how non crossing partitions play a role in free probability theory.

\begin{proof} For any partition $\pi$ of $V$, denote $\sigma(\pi)$ the partition of the edges of $T$ such that $i\sim_\sigma i'$ if and only if $i$ and $i'$ belong to a same simple cycle of $T$. Note first that $T^\pi$ is a cactus if and only if there exists at least one \emph{isolated} simple cycle, that is a subgraph attached to the rest of the graph by a single vertex, and the graph without this simple graph is a cactus. Indeed, let $\mcal G$ be the (undirected) graph whose vertices are the simple cycles of $T$ with an edge between two cycles for each vertex they have in common. Then $T^\pi$ is a cactus if and only if $G$ is a tree. A leaf of this tree is a simple cycle with the expected property. On the other hand, $\sigma$ is a non crossing partition if and only if there is at least one block of $\sigma$ interval $I$ of $[n]$ and the restriction of $\sigma$ to $[n]\setminus I$ is non crossing (the proof is similar, see for instance \cite[Property 17.9]{GUI}). Since isolated simple cycles of $T$ correspond to intervals of $\sigma$, we get the expected property.
\end{proof}

\begin{Cor}\label{Cor:Cacti} Let $(\mcal A, \tau)$ be an algebraic traffic space with trace $\Phi$ and let $\mbf a$ be a family of elements of $\mcal A$.
Assume that the injective distribution of $\mbf a$ is supported on well oriented cacti and is multiplicative w.r.t. their cycles, that is: for any test graph $T\in \mcal T\langle \mbf a \rangle$, 
		$$\tau^0[T] = \one\big( T \mrm{ \ is  \ a \ well \ oriented \ cactus} \big) \times \prod_{ C} \tau^0[C],$$
where the product is over the simple cycles of $T$. Then for any simple cycle $C$ with consecutive edges $(\cdot \overset{a_1}\leftarrow \cdot \dots \cdot \overset{a_n}\leftarrow \cdot)$ we have $\tau^0[C] = \kappa_n(a_1\etc a_n)$ where $\kappa_n$ is the $n$-th free cumulant function relative to the trace $\Phi$. 
\end{Cor}

\begin{proof} Let $T$ denotes a simple cycle with consecutive edges $(\cdot \overset{a_1}\leftarrow \cdot \dots \cdot \overset{a_n}\leftarrow \cdot)$. Then the definition of $\Phi$, the formula for $\tau^0$, and the lemma yield
	\eq
		\Phi( a_1 \dots a_n) & = & \tau[T] = \sum_{\pi \in \mcal P(V)} \one(T^\pi \mrm{ \ well \ oriented \ cactus}) \prod_{ C} \tau^0[C]\\
		& = & \sum_{\sigma \in NC(n)} \prod_{ C\in \sigma} \tau^0[C],
	\qe
with in the second line the abuse of notation that a cycle $C$ with consecutive edges $(\cdot \overset{a_{i_1}}\leftarrow \cdot \dots \cdot \overset{a_{i_\ell}}\leftarrow \cdot)$ of a cactus $T^{\hat \sigma}$ is identified with the corresponding block $\{i_1  \etc i_\ell\}$ of $\sigma$. Since $\tau^0$ is multi-linear when seen as function of the labels of its edges, this property characterizes the free cumulants functions by M\"obius inversion formula stated in Section \ref{Sec:Mobius}.\end{proof}

The motivation to introduce this notion is that the traffic distribution of $\mbf a$ is completely determined by its non-commutative distribution in $(\mcal A, \Phi)$ since free cumulants are determined by $\Phi$. This is the starting point of the canonical construction which is developed in Section \ref{section:Canonical_construction}. Before stating this, we first present properties and example of such traffics.

\subsection{Unitarily invariant traffics}

Let us temporarily say that a family of traffics is \emph{of cactus type} when its injective distribution is supported on well oriented cacti and multiplicative w.r.t. their cycles, as in Corollary \ref{Cor:Cacti}. We characterize this ensemble of traffics in terms on the following distributional symmetry.

\begin{Def}\label{Def:UI} Let $(\mcal A, \tau)$ be an algebraic traffic space and $\mbf a=(a_j)_{j\in J}$ be a family of elements of $\mcal A$. We say that $\mbf a$ is \emph{unitarily invariant} if and only if it has the same traffic distribution as $u\mbf a u^*=(ua_ju^*)_{j\in J}$, where $(u,u^*)$ is traffic independent of $\mbf a$ and limit of $(U_N,U_N^*)$ for a Haar unitary random matrix $U_N$. 
\end{Def}

\begin{Prop}\label{EqCactusUI} A family of traffics is unitarily invariant if and only if it is of cactus type.
\end{Prop}

The proof of the proposition is given in Section \ref{Sec:UISCMC} and requires an analysis of the geometry of cacti and graph of colored components.

\begin{Cor} Let $(\mcal A, \tau)$ be an algebraic traffic space and let $\mbf a$ be a family of traffics of cactus type. Then the unital algebra spanned by $\mbf a$ is of cactus type.
\end{Cor}

\begin{proof} Let $\mbf b = \big(P_j(\mbf a) \big)_{j\in J}$ for some non-commutative polynomials $P_j$, $j\in J$. Then $u \mbf b u^*= \big(P_j(u \mbf a u^*) \big)_{j\in J}$ has the same traffic distribution as $\mbf b$, so it is of cactus type.
\end{proof}

For all $N\geq 1$, let $\mbf A_N$ be a family of random matrices in $\mrm{M}_N(\mbb C)$. We recall that under the assumptions of Theorem \ref{Th:Matrices} (the convergence in $^*$-distribution and the asymptotic factorization of $^*$-moments), $\mbf A_N$ converges in traffic distribution toward a unitarily invariant family.

\begin{Th}\label{Th:MatricesBis} Under the above setting, the asymptotic factorization property holds for the traffic distribution: for all test graphs $T_1 \etc T_k$, we have the following convergence
	\eqa
		\lefteqn{ \lim_{N\to \infty} \mathbb{E}\left[\frac{1}{N}\mathrm{Tr}\left(T_1(\mbf A_N)\right)\cdots \frac{1}{N}\mathrm{Tr}\left(T_k(\mbf A_N)\right)\right]} \label{Eq:Facto}\\
		 &= &\lim_{N\to \infty} \mathbb{E}\left[\frac{1}{N}\mathrm{Tr}\left(T_1(\mbf A_N)\right)\right]\cdots \lim_{N\to \infty} \mathbb{E}\left[\frac{1}{N}\mathrm{Tr}\left(T_k(\mbf A_N)\right)\right].\nonumber
	\qea
\end{Th}

The proof of Theorems \ref{Th:Matrices} and \ref{Th:MatricesBis} is given in Section \ref{subsec:conv} and is based on Weingarten calculus. Factorization property of $^*$-moments is required to get the multiplicativity of the injective distribution with respect to the cycles of cacti as in Corollary \ref{Cor:Cacti}. Let us give now some examples of large random matrices converging to traffics of free types.

\begin{Ex}\label{Ex:UIMatrices}
\begin{enumerate}
 \item A Haar unitary matrix $U_N$ converges to a unitarily invariant traffic $u$ in some traffic space $(\mcal A, \tau)$, and we can assume that $u$ is unitary ($u^*u=uu^*=1$), see \cite{Male2011}. Denote by $\Phi$ the trace associated to $\tau$. It is known that in the non-commutative probability space $(\mcal A, \Phi)$, $u$ is a \emph{Haar unitary}, characterized by $\Phi\big(u^k(u^*)^\ell\big)=\one(k=\ell)$ for any $k, \ell\geq 0$. Recall that the only nonzero free cumulants of $u$ are
	$$\kappa_{2n}(u,u^* \etc u,u^*) = \kappa_{2n}(u^*,u \etc u^*,u) = c_{n-1}(-1)^{n-1}$$
 where $c_n = \frac{ 2n!} {(n+1)! n!}$ are the Catalan numbers. In particular, the injective traffic distribution of $u$ is supported on well oriented cacti whose cycles have even size and whose labels are alternated.
\item Let $X_N = \big( \frac{x_{i,j}}{\sqrt n } \big)_{i,j=1\etc N}$ be a complex Wigner matrix (the $x_{i,j}$ are independent and identically distribution along and out of the diagonal, the distribution of $x_{i,j}$ does not depend on $N$ and admit moments of all orders). Assume the entries are centered, invariant in law by complex conjugation ($x_{i,j} \overset{law}= \overline{x_{i,j}}$) and that $\esp[ |x_{i,j}|^2]=1$, $\esp[ x_{i,j}^2]=0$. Then $X_N$ converges to a unitarily invariant traffic $x$ in some traffic space $(\mcal A, \tau)$, and we can assume that $x$ is self-adjoint ($x^*=x$), see \cite{Male2011}. It is known that in the non-commutative probability space $(\mcal A, \Phi)$, $x$ is a \emph{semicircular variable}, characterized $\Phi(a^k) = \one(k \mrm{ \ even}) c_{k/2}$ for any $k$, where $c_n$ are the $n$-th Catalan numbers. The only nonzero free cumulant of $x$ is $\kappa_2(x,x) =1$. In particular, the injective traffic distribution of $x$ is supported on cacti whose cycles have size two (called the double trees in \cite{Male2011}).
	\item An interest of the notion of unitarily invariant traffics is that it is not restricted to the limit of unitarily invariant matrices, as we have seen in the previous example with Wigner matrices. Matrices which are asymptotically unitarily invariant can even be more structured. For instance, convergence to a unitarily invariant semicircular traffic remains true when Wigner matrix models is generalized to Wigner matrices with intermediated exploding moments (like diluted Erd\"os-Re\'nyi graphs) \cite{Male122}, for uniform regular graphs with large degree \cite{MP14} (when restricting the traffic distribution to \emph{cyclic test graphs}), periodic band Wigner matrices and band Wigner matrices with slow growth \cite{Au16}. Hence, the properties of unitarily invariant traffic we state below are asymptotically true for these models.
\end{enumerate}
\end{Ex}

\subsection{Relation with freeness and large random matrices}\label{Sec:RelFree}

\subsubsection{Abstract statement}

The following proposition motivates that unitarily invariant traffics are referred as traffics of \emph{free type}.

\begin{Prop}\label{Prop:FreenessFreeTraffics} Let $(\mcal A, \tau)$ be an algebraic traffic space with trace $\Phi$. For each $j\in J$ let $\mbf a_j$ be a family of traffics in $\mcal A$ and set $\mbf a = \cup_j \mbf a_j$. Let  $\mbf b$ be an arbitrary family of traffics in $\mcal A$. 
\begin{enumerate}	
	\item If $\mbf a_j$ is unitarily invariant for each $j\in J$ and the $\mbf a_j$'s are traffic independent then $\mbf a $ is unitarily invariant and the $\mbf a_j$'s are free independent in $(\mcal A, \Phi)$.
	\item Reciprocally if $\mbf a$ is unitarily invariant and the $\mbf a_j$'s are free independent in $(\mcal A, \Phi)$ then they are traffic independent in $(\mcal A, \tau)$. 
	\item If $\mbf a$ is unitarily invariant and is traffic independent from $\mbf b$ then $\mbf a$ and $\mbf b$ are freely independent in $(\mcal A, \Phi)$.
\end{enumerate}
\end{Prop}

\begin{Rk}\label{Rk:FreeCacti} For the first and third parts of the statement, it is sufficient to assume, instead of the unitary invariance of the $\mbf a_j$'s that for any test graph $T$ with no cutting edge, $\tau^0[T]=0$ whenever $T$ is not a cactus. 
\end{Rk}

A proof of the proposition is given in \cite[Section 5.2]{Male2011} based on the property of unitary invariance (Definition \ref{Def:UI}). For completeness, we give a proof using the cactus property. 

\begin{proof} 1. Let $T \in \mcal T \langle \bigsqcup_j \mbf  a_j\rangle$. Under the assumptions of the proposition, we can write, using \emph{w.o.} as a shortcut for \emph{well oriented},
	\eq
		\tau^0[T] & = &\one\big( \mcal G \mcal C \mcal C(T) \mrm{ \ is \ a  \ tree} \big) \prod_{S \in \mcal C \mcal C (T)} \one(S \mrm{ \ w.o. \ cactus}) \prod_{ C \mrm{ \ cycle \ of \ } S} \tau^0[C].
	\qe
Let us say that a cactus $T$ in variables $\mbf a =  \bigsqcup_j \mbf a_j$ is well colored (in short w.c.) whenever each cycle of $T$ is labeled by variables in a same family $\mbf a_j$. Note that $T$ is well colored if and only if $\sigma$ is a non-mixing non crossing partitions, that is each of its blocks contain variables in a same family $\mbf a_j$. Since the graph of colored components of a cactus is a tree if and only if it is well colored, we then get
	\eqa\label{Eq:ProofLinkFree}
		\tau^0[T] & =&  \one ( T \mrm{ \ w.o.w.c. \ cactus})  \prod_{ C \mrm{ \ cycle \ of \ } S} \tau^0[C].
	\qea
Moreover, let $C \in \mcal T \langle \bigsqcup_j \mbf  a_j\rangle$ be a simple cycle with consecutive edges $(\cdot \overset{a_{i_1}}\leftarrow \cdot \dots \cdot \overset{a_{i_n}}\leftarrow \cdot)$, where the $a_i$ are elements of the $\mbf a_j$'s. The above formula yields
	\eq
		\Phi(a_{i_1} \dots a_{i_n}) = \tau[C] =  \sum_{\substack{\sigma \in \mrm{NC}(n) \\ \mrm{non-mixing}}}    \prod_{ \{i_1< \dots < i_\ell\} \in \sigma} \kappa_\ell(a_{i_1} \etc a_{i_\ell}),
	\qe
which characterizes free variables. Moreover, this implies the correspondance between injective traces of well-oriented cycles and free cumulants. Hence, coming back to Equation \eqref{Eq:ProofLinkFree} for general $T$ we can write
	\eqa\label{Eq:ProofLinkFree2}
		\tau^0[T] & =&  \one ( T \mrm{ \ w.o. \ cactus})  \prod_{ C \mrm{ \ cycle \ of \ } S} \tau^0[C].
	\qea
 since for test graphs $T$ that are not well colored, there are mixed cumulants along some cycles. Hence $\mbf a$ is of cactus type, so it is unitarily invariant.

2. Reciprocally, let us assume that $\mbf a$ is unitarily invariant and that the $\mbf a_j$'s are free independent. Let us prove that they are traffic independent. Since $\mbf a$ is of cactus type, for any test graph $T \in \mcal T\langle \bigsqcup_j \mbf a_j \rangle$, Equation \eqref{Eq:ProofLinkFree2} is satisfied. Freeness of the $\mbf a_j$'s implies vanishing of mixed cumulants, so that $\tau^0[C] = 0$ for some cycle if $T$ is not a well colored cactus. But $T$ is a w.o.w.c. cactus if and only $\mcal G \mcal C \mcal C(T)$ is a tree and the colored components are cacti. This yields the formula \eqref{Eq:ProofLinkFree} and by the above computation that the $\mbf a_j$'s are traffic independent.

3. Let now $\mbf a$ be a unitarily invariant family of traffics independent of an arbitrary family $\mbf b$, and let us prove that $\mbf a$ and $\mbf b$ are free independent in $(\mcal A, \Phi)$. Without loss of generality, we can assume that the families of matrices contain the identity. By \cite[Theorem 14.4]{NS}, it suffices to prove that for any $a_1\etc a_n $ in $\mbf a$ and any $b_1\etc b_n$ in $\mbf b$, the following is satisfied
	$$\Phi(a_1 b_1 \dots a_n b_n) = \sum_{\sigma \in \mrm{NC}(n)} \kappa_\sigma(a_1\etc a_n) \times \Phi_{\hat \sigma} (b_1\etc b_n),$$
where $\hat \sigma$ is the Kreweras complement of $\sigma$, that is the largest non crossing partition of $\{1' \etc n'\}$ such that $\sigma \sqcup \hat \sigma$ is a non crossing partition of $\{1,1' \etc n, n'\}$, and 
	$$\kappa_\sigma(a_1\etc a_n) = \prod_{\{i_1< \dots < i_\ell\} \in \sigma} \kappa_\ell(a_{i_1} \etc a_{i_\ell}),$$
with a similar definition for $\Phi_{\hat \sigma}$.

Let $T$ be a simple cycle with consecutive edges $(\cdot \overset{a_1}\leftarrow \cdot \overset{b_1}\leftarrow \cdot \dots\cdot \overset{a_n}\leftarrow  \cdot \overset{b_n}\leftarrow \cdot)$. Then by definition of traffic independence and the cactus property of $\mbf a$, denoting by $V$ the vertex set of $T$ one has
	\eq
		\lefteqn{\Phi(a_1 b_1  \dots  a_n b_n) = \tau[ T]}\\
		 & = & \sum_{\pi\in \mcal P(V)} \one\big( \mcal G \mcal C \mcal C(T^\pi)\mrm{ \ is \ a \ tree}\big) \\
		 & & \ \ \ \times \Bigg( \prod_{S\in \mcal C \mcal C_\mbf b (T^\pi)} \tau^0[S] \times   \prod_{S\in \mcal C \mcal C_\mbf a (T^\pi)} \Big(\one( S \mrm{ \ w.o. \ cactus}) \prod_{C \mrm{ \ cycle \ of \ } S} \tau^0[C] \Big) \Bigg),
	\qe
where $\mcal C \mcal C_\mbf a(T)$ is the set of colored components of $T$ labeled in $\mbf a$, and $\mcal C \mcal C_\mbf b(T)$ is defined similarly. 

The arguments of the proof are those used in \cite{Male122} (replacing the so-called \emph{fat trees} by the cacti). Given $\pi \in  \mcal P(V)$, denote by $S_{\mbf a, \pi}$ the graph obtained from $T^\pi$ by identifying the source and target of each edge labeled in $\mbf b$ and suppressing these edges. If $\mcal G \mcal C \mcal C(T^\pi)$ is a tree and $\mcal C \mcal C_\mbf a$ is a set of cacti, then $S_{\mbf a, \pi}$ is a cactus. By Lemma \ref{Lem:CactiNCP}, $\pi$ induces a non crossing partition $\sigma_{\mbf a, \pi}$ of the set $E_\mbf a$ of edges of $T$ labeled $\mbf a$, whose blocks are associated to variables labeled $\mbf a$ in a same cycle of $T$ (the cyclic order of $E_\mbf a$ is the one around the cycle $T$). 

Reciprocally, consider a non crossing partition $\sigma_{\mbf a}$ of $E_\mbf a$ and then a cactus $S(\sigma_{\mbf a})$ labeled $\mbf a$. Let $\sigma_{\mbf b} = \hat \sigma_{\mbf a}$ be the Kreweras complement of $\sigma_{\mbf a}$, which is a partition of the set $E_{\mbf b}$ of edges of $T$ labeled $\mbf b$, and let $S(\sigma_\mbf b)$ denotes the cactus associated to $\sigma_{\mbf b}$. Once more we consider the Kreweras complement of $\sigma = \sigma_{\mbf a} \sqcup \sigma_{\mbf b}$, which is now a partition $\pi_0\in\mcal P(V)$ of the vertex set of $T$. By Lemma \ref{Lem:CactiNCP}, $T^{\pi_0}$ and $S(\sigma \mbf b)$ are cacti. Moreover, the partitions $\pi \in \mcal P(V)$ such that $\mcal G \mcal C \mcal C(T^\pi)$ is a tree, $\mcal C \mcal C_\mbf a$ is a set of cacti and $S_{\mbf a, \pi} = S(\sigma_\mbf a)$ are those that only identifies vertices in a same cycle of $T^{\pi_0}$ labeled $\mbf b$, which are the cycles of $S(\sigma_\mbf b)$, see Figure \ref{Fig:13}. Then we have, using that $\mbf a$ is of cactus type and the definition of $\tau^0$ in the second line,
	\eq
		\tau[ T]  & = & \sum_{\sigma_{\mbf a} \in NC(E_{\mbf a})} \prod_{C_{\mbf a} \mrm{ \ cycle \ of \ } S_{\sigma_\mbf a} }\tau^0[C_{\mbf a}] \times \prod_{C_{\mbf b} \mrm{ \ cycle \ of \ } S_{\hat\sigma_\mbf a} } \ \sum_{\pi \in \mcal P\big(V(C_{\mbf b})\big) }\tau^0[C_\mbf b^\pi]\\
		& = & \sum_{\sigma_{\mbf a} \in NC(E_{\mbf a})} \prod_{C_{\mbf a} \mrm{ \ cycle \ of \ } S(\sigma_\mbf a) }\kappa(C_{\mbf a}) \times \prod_{C_{\mbf b} \mrm{ \ cycle \ of \ } S(\hat \sigma_\mbf a)} \Phi(C_\mbf b),
	\qe
where $V(C_{\mbf b})$ denoting the vertex set of $C_{\mbf b}$, $\kappa(C)$ means the free cumulants $\kappa(x_1\etc x_\ell)$ for a cycle with consecutive edges $(x_1\etc x_\ell)$, and $\Phi(C)$ is defined similarly. With this notation, this is the desired formula.
\end{proof}

  \begin{figure}[h!]
    \begin{center}
     \includegraphics[width=120mm]{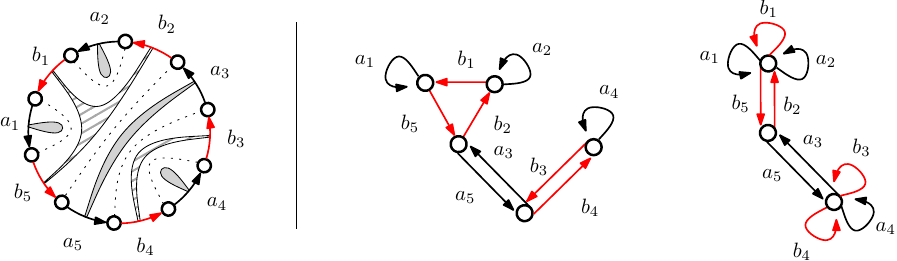}
    \end{center}
    \caption{Left: the cycle $T$ with a non crossing partition $\sigma_\mbf a$ (full grey blocks), its kreweras complement $\sigma_\mbf b$ (striped grey blocks), and the Kreweras complement $\pi_0$ of $\sigma_\mbf a \sqcup \sigma_\mbf b$ (dotted lines). Center and right: the quotient graph $T^{\pi_0}$, and another quotient graphs $T^\pi$ such that $S_\pi$ is the cactus of $\sigma_\mbf a$.}
    \label{Fig:13}
  \end{figure}

\subsubsection{Asymptotic freeness of random matrices}

The previous proposition implies a universal property of free independence for asymptotically unitarily invariant matrices. 

\begin{Cor}\label{Cor:Freeness1} Let $\mbf A_j^{(N)}$, $j\in J$, be independent families of random matrices such for each $j\in J$, \begin{enumerate}
	\item [(H0)]  $U\mbf A_j^{(N)}U^*$ has the same law as $\mbf A_j^{(N)}$ for any permutation matrices $U$.
	\item [(H1)] $\mbf A_j^{(N)}$ converges in traffic distribution to a unitarily invariant family traffics.
	\item [(H2)]  $\mbf A_j^{(N)}$ satisfies the factorization property \eqref{Eq:Facto}.
\end{enumerate}
Then the $\mbf A_j^{(N)}$'s are asymptotically freely independent with respect to $\esp\big[ \frac 1 N \Tr \big]$ and $\cup_j \mbf A_j^{(N)}$ is asymptotically freely independent from any auxiliary independent family of random matrices converging in traffic distribution and satisfying (H2).
\end{Cor}

The \emph{universal} aspect of this statement is that it holds for any auxiliary matrices, without assumptions on the form of their limiting traffic distribution.

\begin{proof} The three first assumptions implies the asymptotic traffic independence by  \cite{Male2011}, so the corollary follows directly from Proposition \ref{Prop:FreenessFreeTraffics}.
\end{proof}

One can work under a slightly weaker assumption than the convergence in traffics distribution of the matrices, since the conclusion is about non-commutative distribution. This is allowed by the modification of the asymptotic traffic independence theorem of \cite{Male122}. Let us say that a test graph $T$ is \emph{cyclic} if there exists a cycle visiting each edge once in the sense of the orientation. For a family $\mbf B_N$ of matrices, we denote by $\| \mbf B_N\|$ the supremum of the operation norm (square-root of the largest singular value) of the matrices of $\mbf B_N$.

\begin{Cor} Let $\mbf A_j^{(N)}$, $j\in J$, be independent families of random matrices such for each $j\in J$, $\mbf A_j^{(N)}$ satisfies (H0) and the following modifications of the previous hypotheses:
\begin{enumerate}
	\item [(H1')] $\esp\big[ \frac 1 N \Tr \, T(\mbf A_N)\big]$ converges for any cyclic test graph $T$ and the limit satisfies the cactus formula.
	\item [(H2')]  $\mbf A_N$ satisfies the factorization property on cyclic test graphs, and furthermore it satisfies the tightness condition of \cite{Male122}, for instance $\|\mbf A_N\|$ is uniformly bounded as $N$ goes to infinity.
\end{enumerate}
Then the $\mbf A_j^{(N)}$'s are asymptotically free independent and $\cup_j \mbf A_j^{(N)}$ is asymptotically free independent from any independent family of random matrices converging in traffic distribution on cyclic test graphs and satisfying (H2').
\end{Cor}

For instance, a normalized adjacency matrix $\mbf A_N$ of a regular large graph with large degree may converges to a unitarily invariant traffics $  a$ on cyclic graphs (see \cite{MP14}). It cannot converges to $  a$ on all test graphs since $deg(\mbf A_N)$ is a constant matrix whereas $deg(  a)$ is a non trivial random variable for a nonzero unitarily invariant traffic $a$.

\begin{proof} The first assumptions implies the asymptotic traffic independence when distribution are restricted to cyclic graphs by  \cite{Male122}. Computation of trace of cyclic test graphs involves only computation of injective trace of cyclic graphs and reciprocally. Moreover the trace depends only on combinatorial traces of such graphs. Hence all the computation of the section is valid with this restriction.
\end{proof}

\section{Equivalence between unitary invariance and cactus type}\label{Sec:UISCMC}

This section is dedicated to the proof of Proposition \ref{EqCactusUI}.

\subsection{On the geometry of cacti}

 \begin{Def}\label{Def:Cutt}
 \begin{itemize}
 	\item  A cutting edge of a finite graph is an edge whose removal increases the number of connected component. A two-edge connected (t.e.c.) graph is a connected graph with no cutting edges.
	\item The cut number between two vertices is the minimal number of edges whose removal separate them. 
	\item  Two vertices of a graph form a 3-connection whenever there exist three edge-distinct paths joining them.
\end{itemize}
\end{Def}

We will use Menger's theorem \cite{Menger1927}:

\begin{Th}\label{Th:Menger} Let $v$ and $w$ two distinct vertices of a connected graph. The cut number between $v$ and $w$ is equal to the maximum number of edge-disjoint paths from $v$ to $w$.
\end{Th}

In particular, a 3-connection consists in vertices with cut number at least three. A t.e.c. graph is a graph whose vertices have cutting numbers at least two. We can then deduce the following characterization of cacti. 

\begin{Prop}\label{Prop:2CutDist} A finite graph is a cactus if and only if the cut number between two vertices is constant, equal to two.
\end{Prop}

\begin{proof} Let $T$ be a finite graph with cut number constant to two. It is connected since the cut number is finite. There is no vertices $v$ and $w$ with cut number equal to one, so every edge $e=(v,w)$ is contained in a simple cycle. Moreover, if an edge $e$ of a graph belongs to more than two distinct simple cycles, the union of these cycles with $e$ remove is still t.e.c. so one can find a 3-connection in the graph. Hence $T$ is a cactus.

Let now $T$ be a cactus. The cut number between two vertices is greater than one since the graph is connected and each edge belong to a cycle. Moreover, the cut number between two vertices $v$ and $w$ is alway two. Indeed, consider a simple path between $v$ and $w$ and remove one of its edges. In the cycle with this edge removed, we can remove an edge to separate $v$ and $w$: the two vertices cannot be in a same connected component of the graph outside this cycle because the path between them is simple.
\end{proof}

Let $T$ be a test graph and let $\pi$ be a partition of its vertices. Since edge-disjoint paths on $T$ induce edge-disjoint paths on the quotient graph $T^\pi$, the cut number of two vertices $v$ and $w$ in $T$ cannot decrease if $v$ and $w$ are not identified in $T^\pi$. This implies the following lemma.

\begin{Lem}\label{Lem:3co} Let $T$ be a connected finite graph, let two vertices $v,w$ forming a 3-connection, and let $\pi$ a partition of the vertex set of $T$. If the quotient graph $T^\pi$ is a cactus then $v\sim_\pi w$.
\end{Lem}

We now deduce from this lemma three properties characterizing unitarily invariant traffics that we use in next section.

\begin{Cor}\label{Lem:Fact1} Let $\mbf a$ be a  family of traffics of cactus type in an algebraic traffic space $(\mcal A, \tau)$. Let $T$ be a test graph labeled in $\mbf a$ with two vertices $v$ and $w$ forming a 3-connection. Let $T_{v\sim w}$ be the test graph obtained by identifying $v$ and $w$ in $T$. Then one has $ \tau[T]= \tau[T_{v\sim w}]$.
\end{Cor}

In particular, by iterating this procedure, we get that $\tau[T] = \tau[\tilde T]$ where $\tilde T$ is obtained by identifying all pairs of 3-connections. The cut number of pairs of vertices of $\tilde T$ is always smaller than or equal to two.

\begin{proof} If a quotient graph $T^\pi$ of $T$ is a cactus then $v\sim w$. Since $\tau^0$ is supported on cacti Lemma \ref{Lem:3co} implies
	$$\tau[ T]  = \sum_{  \pi \in \mcal P(V)  } \tau^0\big[ T^\pi\big] =\sum_{\substack{ \pi \in \mcal P(V) \\ v \sim_\pi w}} \tau^0\big[ T^\pi\big] = \tau[T_{v\sim w}].$$
\end{proof}
 
\begin{Cor}\label{Lem:Fact2}  Let $\mbf a$ be a family of traffics of cactus type in an algebraic traffic space $(\mcal A, \tau)$. Let $T$ be a test graph that can be obtained by identifying two vertices of test graphs $S$ and $S'$, where $S$ is t.e.c. Then
	$$\tau[T] = \tau[S] \times \tau[S'].$$
\end{Cor}
In particular, by iterating this procedure, we get that if $T$ is a cactus then 
	$$\tau[T ] = \prod_{ C \mrm{ \ cycle \ of \ } T} \tau[ C].$$

\begin{proof} 
Denote by $o$ the vertex of $T$ that belong both to $S$ and $S'$. Let $v$ (resp. $v'$) be a vertex of $S$ (resp. $S'$), seen in $T$ and different from $o$. Let $\pi$ be a partition of $T$ such that $v\sim_\pi v'$ and $T^\pi$ is a cactus. Then $T^\pi$ is a quotient of $T_{v\sim_\pi v'}$ for which $(v,o)$ forms a 3-connection and so by Lemma \ref{Lem:3co} one has $v\sim_\pi o \sim_\pi v'$. Hence, each partition $\pi$ such that $T^\pi$ is a cactus is the union $\pi = \sigma \cup \sigma'$ of a partition $\sigma$ of the vertices of $S$ and a partition $\sigma'$ of those of $S'$. For such a partition $\pi = \sigma \cup \sigma'$, by definition of cactus type traffics we have $\tau^0[T^\pi] = \tau^0[S^\sigma] \times \tau^0[(S')^{\sigma'}]$. Hence we get, denoting by $V_S$ and $V_{S'}$ the vertex sets of $S$ and $S'$ respectively, 
	$$\tau[T]  = \sum_{\sigma\in \mcal P(V_S)} \tau^0[S^\sigma] \times \sum_{\sigma'\in \mcal P(V_S)} \tau^0[(S')^{\sigma'}] = \tau[S] \times \tau[S'].$$ 
\end{proof}

It remains to show how to handle test graphs with cutting edges.

\begin{Lem}\label{Lem:Fact3} Let $\mbf a$ be a  family of traffics of cactus type in an algebraic traffic space $(\mcal A, \tau)$. Let $T$ be a test graph labeled in $\mbf a$ and denote by $\mcal O$ the set of vertices of $T$ with odd degree (the degree is the number of neighbors, here we forget the orientation of the edges). For any partition $\sigma$ of $\mcal O$, let us denote
	$$p_\sigma(T) = \sum_{\sigma'\geq \sigma} \mrm{Mob}_{\mcal P(\mcal O)}( \sigma, \sigma') T^{\sigma'},$$
where $\mrm{Mob}_{\mcal P(\mcal O)}$ denotes the M\"obius function of the poset of partitions of $\mcal O$ and $T^{\sigma'}$ the graph obtained by identifying vertices in a same block of $T$. Then one has
	$$\tau[T ] = \sum_{\substack{\sigma \in \mcal P(\mcal O) \\ |B| \mrm{ \ is \ even } \, \forall B \in \sigma}} \tau\big[ p_\sigma(T)\big].$$
\end{Lem}

In particular, we get that $\tau[T ]$ can be written as a linear combination of $\tau[S]$ where $S$ has no cutting edges. Since the cutting number can always increase when taking quotients, together with the above lemmas, one gets an expression of $\tau[T]$ in terms of linear combinations of products of $\tau[C]$ where $C$ are simple cycles.

\begin{proof} Cacti have only vertices of even degree. Hence, if $\pi$ is a partition such that $T$ is a cactus then it must re-group the vertices in $\mcal O$ in blocks of even size. Hence
	$$\tau[T] = \sum_{\substack{\sigma \in \mcal P(\mcal O) \\ |B| \mrm{ \ is \ even } \, \forall B \in \sigma}} \sum_{\substack{ \pi \in \mcal P(V) \\ \mrm{s.t. \ }\pi_{\mcal O} = \sigma}} \tau^0[T^\pi].$$
By the same proof as Lemma \ref{Def:Solid} applied to $T^\sigma$, the second sum in nothing else than $\tau\big[ p_\sigma(T)\big]$.
\end{proof}

\subsection{Proof of the equivalence}

Let $\mbf a = (a_j)_{j\in J}$ be an arbitrary family of traffics, independent from the limit $(u, u^*)$ of a Haar unitary matrix and its conjugate, and denote $\mbf b = (ua_ju^*)_{j\in J}$. We shall prove that $\mbf a$ and $\mbf b$ have the same traffic distribution.

For a test graph $T$ labeled in $\mbf b$ we denote by $\tilde T$ the graph labeled in $\mbf a, u, u^*$ obtained by replacing each edge $(\cdot \overset{ b}\leftarrow \cdot)$ by the sequence of edges $(\cdot \overset{ u}\leftarrow\cdot \overset{ a}\leftarrow\cdot \overset{ u^*}\leftarrow \cdot)$ and by $\tilde V$ the vertex set of $\tilde T$. In this section, we say that a partition $\pi$ of $\tilde V$ is \emph{valid} whenever $ \mcal G \mcal C \mcal C(\tilde T^\pi)$ is a tree and the colored components of $\tilde T^\pi$ labeled in $(u,u^*)$ are well oriented cacti whose edges along each cycle alternate between $u$ and $u^*$.

Lemma \ref{Lem:3co} is replaced by the following.
 
\begin{Lem} If $\pi$ is a valid partition then for any 3-connection $(v,w)$ of $\tilde T$ one has $v\sim_\pi w$.
\end{Lem}

\begin{proof} Let $(v,w)$ be such a pair in $\tilde T$ and $\pi$ a valid partition. Assume moreover $v \not \sim_\pi w$ and let us find a contradiction. Let $S_1 \etc S_n$ be the path in $\mcal G \mcal C \mcal C(\tilde T)$ between the colored components $S_1$ and $S_n$ containing $v$ and $w$ respectively. If $n\geq 2$, then one of these components is labeled in $(u,u^*)$ and it is traversed by at least three edge disjoint paths, so it cannot be a cactus. If $n=1$, then $\tilde T^\pi$ has a cycle with an odd number of variables in $u$ and $u^*$, so there are colored components labeled in $(u,u^*)$ which are not cacti whose edges are labeled alternatively by $u$ and $u^*$.
\end{proof}

We now prove that the tree properties stated in Corollary \ref{Lem:Fact1},  Corollary \ref{Lem:Fact2} and Lemma \ref{Lem:Fact3} hold for $\mbf b$. By independence of $\mbf a$ and $(u,u^*)$ and by the formula for the traffic distribution of $(u,u^*)$, we have
	\eq
		\tau[T] = \tau[\tilde T] & = &  \sum_{\pi \in \mcal P(\tilde V)} \one\big( \mcal G \mcal C \mcal C(\tilde T^\pi) \mrm{ \ is \ a \ tree}) \prod_{ S \in \mcal C \mcal C_{\mbf a}(\tilde T^\pi) } \tau^0[ S] \\
		& & \ \ \ \times \prod_{ S \in \mcal C \mcal C_{(u,u^*)}(\tilde T^\pi) }\one( S \mrm{\ w.o. \ cactus}) \prod_{C \mrm{ \ cycle \ of \ } S} \tau^0[S],
\qe
where $\mcal C \mcal C_{\mbf a}$ and $\mcal C \mcal C_{(u,u^*)}$ denote the set of colored components labeled in $\mbf a$ and $(u,u^*)$ respectively. 

The 3-connections of $ T$ correspond to those of $\tilde T$. Hence for any such pair $(v,w)$ in $T$, we get $\tau[T] = \tau[\tilde T] = \tau[\tilde T_{v,w}] = \tau[T_{v,w}]$, so the first property is clear. The proof of the second property is similar: if $T$ is a t.e.c. graph that can be obtained by identifying two vertices of disjoint test graphs $S$ and $S'$, then $\tilde T$ can be obtained by identifying two vertices $\tilde S$ and $\tilde S'$ and the proof is unchanged, using the above lemma instead of of Lemma \ref{Lem:3co}.

 Let now $T$ be an arbitrary test graph and denote by $\mcal O$ the set of vertices of odd degree. Corresponds in $\tilde T$ a set $\tilde {\mcal O}$. Moreover, denote by $\tilde {\mcal O}'$ the set of vertices of $\tilde T$ both adjacent to an edge labeled in $\mbf a$ and in $(u,u^*)$. If a partition $\pi$ of the vertices of $\tilde T$ is valid, it must regroup the vertices of $\tilde {\mcal O} \cup \tilde {\mcal O}'$ in blocks of even size (since vertices of considered cacti are of even degree). But if $\pi$ identifies a vertex of $\tilde {\mcal O}$ and a vertex of $\tilde {\mcal O}'$, then $T^\pi$ has a cycle with an odd number of edges in $(u,u^*)$, so $\pi$ is not valid. We hence get
	\eq
		\tau[\tilde T ] & = & \sum_{ \substack{\sigma \in \mcal P( \tilde {\mcal O}) \\ |B| \mrm{ \ even \ } \forall B \in \sigma} } \tau\big[ p_\sigma[\tilde T] \big]  =  \sum_{ \substack {\sigma \in \mcal P(   {\mcal O}) \\ |B| \mrm{ \ even \ } \forall B \in \sigma}} \tau\big[ p_\sigma[  T] \big].
 	\qe
	
Hence $\tau[T]$ has the same expression as if labels were in $\mbf a$. Since for a simple cycle $C$ labeled $b_{j_1} \etc b_{j_n}$ one has $\tau[S] = \Phi( b_{j_1} \etc b_{j_n}) = \Phi(a_{j_1} \etc a_{j_n} ) $, we get as expected that $\mbf a$ and $\mbf b$ have the same traffic distribution.

\section{Asymptotically unitarily invariant random matrices}\label{Sec:AsymUImatrices}

\label{subsec:conv}

The purpose of this section is to prove Theorems \ref{Th:Matrices} and \ref{Th:MatricesBis}. Namely. for any unitarily invariant families of matrices $\mbf X_N$ satisfying the assumptions, for any test graphs $T_1 \etc T_m$, 
$$\tau_ {\mbf X_N}(T_1\etc T_m):= \frac 1 {N^m} \esp\Big[ \prod_{i=1}^m \Tr \,  T_i(\mbf X_N) \Big]$$
converges to 
	\eqa\label{Eq:Matrices}
		\prod_{i=1}^m \sum_{\pi \in \mcal P(V_i)} \Bigg(  \one( T_i^{\pi} \mrm{ \ well \ oriented \ cactus}) \prod_{C\in \mrm{Cycle}(T_i^{\pi})}\Nlim \Big( \tau^0_{\mbf X_N}[C] \Big) \Bigg),
	\qea
where $V_i$ denotes the vertex set of $T_i$.

Before reviewing some results about the free cumulants, some results about the Weingarten function, and the links between those two objects in large dimension, let us mention two applications of this result.

\subsection{Applications}

\begin{Lem}\label{Lem:FreeofTrans} If $\mbf A_N$ is a family of matrices converging in traffic distribution to a unitarily invariant family, then $\mbf A_N, \mbf A_N^t$ and $\big(\mrm{deg}(\mbf A_N),\mrm{deg}(\mbf A_N^t)\big)$ are asymptotically freely independent.
\end{Lem}

This generalize a recent result of Mingo and Popa~\cite{MingoPopa2014} stating the asymptotic free independence of $\mbf A_N$ and $\mbf A_N^t$ for unitarily invariant matrices. There we only assume that unitary invariance holds asymptotically.

\begin{proof}Let $(\mcal A, \tau)$ be an algebraic traffic space with trace $\Phi$ and let $\mbf a = (a_j)_{j\in J}$ be a unitarily invariant family of traffics. It is sufficient to prove that the families $\mbf a$, $\mbf a^t = (a_j^t)_{j\in J}$ and $\big(\mrm{deg}(\mbf a),\mrm{deg}(\mbf a^t)\big) =\big( \mrm{deg}( a_j), \mrm{deg}( a_j^t)\big)$ are free independent in $(\mcal A, \Phi)$.

We first prove that $\mbf a $ and $\mbf a^t $ are free. Let us consider $2n$ elements $c_1,\ldots, c_{2n}$ alternatively in $\C\langle \cdot \overset{a}{\leftarrow} \cdot:a\in \mathcal{A}\rangle$ and $\C\langle \cdot \overset{a}{\rightarrow} \cdot:a\in \mathcal{A}\rangle$ such that $\tau_\Phi(c_1)=\ldots=\tau_\Phi(c_{2n})=0$. We want to prove that $\tau_\Phi(\Delta(c_1\ldots c_{2n}))=0$. Using Proposition~\ref{interestingproposition} in order to regroup consecutive edges which are oriented in the same direction, we can assume that the $c_i's$ are written as $\cdot \overset{a_i}{\leftarrow} \cdot$ with $a_i\in \mathcal{A}$ such that $\Phi(a_i)=0$, and $c_{i}$ and $c_{i+1}$ not oriented in the same direction.

Consider now a partition $\pi$ such that $\tau_\Phi^0(\Delta (c_1\ldots c_{2n})^\pi)\neq 0$. Then, take a leaf of the oriented cactus $\Delta (c_1\ldots c_{2n})^\pi$. This leaf is a cycle of only one edge, because if not, the cycle cannot be oriented, since two consecutive edges in $\Delta (c_1\ldots c_{2n})$ are not oriented in the same way. This produces a term $\tau_\Phi^0(\Delta(c_{i}))=0$ in the product $\tau_\Phi^0(\Delta (c_1\ldots c_{2n})^\pi)$, which leads at the end to a vanishing contribution. Finally, $\tau_\Phi(c_1\ldots c_{2n})=0$ and we have the freeness wanted.

Now, let us prove that $\C\langle \ ^{\uparrow a}_{\cdot}\  :a\in \mathcal{A}\rangle$ is free from $\C\langle \cdot \overset{a}{\leftarrow} \cdot, \cdot \overset{a}{\rightarrow} \cdot:a\in \mathcal{A}\rangle$. By the same argument as above, we can consider that we have a cycle $\Delta(c_1\ldots c_{n})$ which consists in an alternating sequence of $c_i's$ written as $\cdot \overset{a_i}{\leftarrow} \cdot$ with $a_i\in \mathcal{A}$ such that $\Phi(a_i)=0$, $\cdot \overset{a_i}{\rightarrow} \cdot$ with $a_i\in \mathcal{A}$ such that $\Phi(a_i)=0$, and $c_i \in \C\langle \ ^{\uparrow a}_{\cdot}\  :a\in \mathcal{A}\rangle$ such that $\tau_\Phi(c_i)=0$. We want to prove that $\tau_\Phi(\Delta(c_1\ldots c_{n}))=0$. If there is no term $c_i\in \C\langle \ ^{\uparrow a}_{\cdot}\  :a\in \mathcal{A}\rangle$, we are in the case of the previous paragraph. Let us assume that there exists at least one such term, say $c_1$. By linearity, we can consider that the term $c_1\in \C\langle \ ^{\uparrow a}_{\cdot}\  :a\in \mathcal{A}\rangle$ is written as $^{\uparrow b_1}_{\cdot}\cdots ^{\uparrow b_k}_{\cdot}-\tau_\Phi(^{\uparrow b_1}_{\cdot}\cdots ^{\uparrow b_k}_{\cdot})$, where $^{\uparrow b_1}_{\cdot}\cdots ^{\uparrow b_k}_{\cdot}$ is some vertex input/output from which start $k$ edges labelled by $b_1,\ldots, b_k \in \mathcal{A}$. Let us prove that $\tau_\Phi(\Delta((^{\uparrow b_1}_{\cdot}\cdots ^{\uparrow b_k}_{\cdot})c_2\ldots c_{n}))$ and $\tau_\Phi(^{\uparrow b_1}_{\cdot}\cdots ^{\uparrow b_k}_{\cdot})\tau_\Phi(\Delta(c_2\ldots c_{n}))$ are equal, which implies by linearity that $\tau_\Phi(\Delta(c_1\ldots c_{n}))=0$. Decomposing into injective trace, we are left to prove that for all partition $\pi$ of the vertices of $\Delta((^{\uparrow b_1}_{\cdot}\cdots ^{\uparrow b_k}_{\cdot})c_2\ldots c_{n})$ which do not respect the blocks $(^{\uparrow b_1}_{\cdot}\cdots ^{\uparrow b_k}_{\cdot})$ and $\Delta (c_2\ldots c_{n})$, $\tau_\Phi^0(\Delta((^{\uparrow b_1}_{\cdot}\cdots ^{\uparrow b_k}_{\cdot})c_2\ldots c_{n})^\pi)=0$. The same argument as previous paragraph works again. If one of the vertex of $(^{\uparrow b_1}_{\cdot}\cdots ^{\uparrow b_k}_{\cdot})$ is identified by $\pi$ with one of the vertex of $\Delta (c_2\ldots c_{n})$, and $\Delta((^{\uparrow b_1}_{\cdot}\cdots ^{\uparrow b_k}_{\cdot})c_2\ldots c_{n})^\pi$ is a cactus there exists a cycle not oriented or a leaf labelled by one $a_i$, which leads to a vanishing contribution.
\end{proof}

\begin{Lem} Let $\mbf A^{(N)}_1 \etc \mbf A^{(N)}_L$ (resp. $\mbf B^{(M)}_1 \etc \mbf B^{(M)}_L$) be independent families of $N$ by $N$ (resp. $M$ by $M$) random matrices, that converge in traffic distribution to unitarily invariant variables and satisfy the factorization property as $N$ (resp. $M$) goes to infinity. Let $U_1 \etc U_L$ be independent uniform permutation matrices of size $N\times M$. Assume that $(\mbf A^{(N)}_\ell)_{\ell=1\etc L}$, $(\mbf B^{(M)}_\ell)_{\ell=1\etc L}$ and $(U_\ell)_{\ell=1\etc L}$ are independent. Then the families $U_1 (\mbf A^{(N)}_1 \otimes \mbf B^{(M)}_1) U_1 \etc U_L (\mbf A^{(N)}_L \otimes \mbf B^{(M)}_L) U_L$ are asymptotically freely independent with respect to $\frac 1 {NM} \Tr$. If moreover $\mbf C_{NM}$ is a family of $NM$ by $NM$ random matrices that converge in traffic distribution and satisfies the factorization property, then it is asymptotically free independent from the previous families.
\end{Lem}

\begin{proof} For each $\ell=1\etc L$, the family of matrices $\mbf A^{(N)}_\ell \otimes \mbf B^{(M)}_\ell$ converges in traffic distribution to the tensor product $\mbf a_\ell \otimes \mbf b_\ell$ of the limits of each factor, and it satisfy the factorization property. Hence by the asymptotic traffic independence theorem, the families of matrices $U_\ell (\mbf A^{(N)}_\ell \otimes \mbf B^{(M)}_\ell) U_\ell$, $\ell=1\etc L$, are asymptotically traffic independent.

On the other hand, let us compute the limiting distribution of each family. Let $T$ be a test graph in $\mcal T\langle \mbf a_\ell\otimes \mbf b_\ell \rangle$. Assume that it has not cutting edge, which is sufficient to characterize the limiting $^*$-distribution. Recall that we denote by $\Lambda_T$ the set of pairs $(\pi_1,\pi_2) \in \mcal P(V)^2$ such that if two elements belong to a same block of $\pi_i$ then they belong to different blocks of $\pi_j$, $i\neq j \in \{1,2\}$. We have by Lemma \ref{Lem:TensorInj}
	\eq
		(\tau_1\otimes \tau_2)^0[ T] 
		& = & \sum_{(\pi_1,\pi_2) \in \Lambda_T} \tau^0\big[ T_1^{\pi_1}\big] \times \tau^0\big[ T_2^{\pi_2}\big],
	\qe
where $T_i$ is the graph whose edges are labeled the variables of the $i$-th factor.

If $(\pi_1,\pi_2)\in \Lambda_T$ contributes in the above term then $T^{\pi_1}$ and $T^{\pi_2}$ are cacti with oriented cycles. Since $T$ is t.e.c., it is a cactus if and only if it has a 3-connection. But if $T$ has a 3-connection, it must be identified to produce a cactus. Hence there is no $(\pi_1,\pi_2)$ in $\Lambda_T$ such that both $T_1^{\pi_1}$ and $T_2^{\pi_2}$ are cacti.

By Lemma \ref{Prop:FreenessFreeTraffics} and Remark \ref{Rk:FreeCacti}, we then get that the matrices are asymptotically freely independent. 
\end{proof}

\subsection{The Weingarten function.}We need to integrate against the $\UN$-Haar measure.  Expressions for these integrals appeared in \cite{Weingarten1978} and were first proven in \cite{Collins2004} and given in terms of a function on symmetric group called the Weingarten function. We recall here its definition  and some of its properties.  For any $n \in\N^*$ and  any permutation $\sigma\in \mcal S_n$,  let us set $$\Omega_{n,N}(\sigma)=N^{\# \sigma}, $$
where $\# \sigma$ is the number of cycles of $\sigma$.
When $n$ is fixed and  $N\to \infty,$  $N^{-n}\Omega_{n,N}\to  \delta_{\Id_n}$. For any pair of functions $f,g: \mcal S_n\to \C$ and $\pi \in \mcal S_n$, let us define the convolution product 
 $$f\star g (\sigma)= \sum_{\pi\preccurlyeq \sigma}f(\pi)g(\pi^{-1}\sigma),$$
 Hence, for $N$ large enough, $\Omega_{n,N}$ is invertible  in the algebra of function on $\mcal S_n$ endowed with convolution as a product. We denote by $\Wg_{n,N}$ the unique function on $\mcal S_n$  such that $$\Wg_{n,N}*\Omega_{n,N}= \Omega_{n,N}*\Wg_{n,N}=\delta_{\Id_n}. $$
Then, \cite[Corollary 2.4]{Collins2004} says that, for any indices $i_1,i'_1,j_1,j'_1\ldots,i_n,i'_n,j_n,j'_n\in \{1,\ldots, N\}$ and $U=\big(U(i,j)\big)_{i,j=1\etc N}$ a Haar distributed random matrix on $\UN$,
\begin{equation}
\label{IntegrationHaar} \esp[U(i_1,j_1)\ldots U(i_n,j_n)\overline{U}(i'_1,j'_1)\ldots \overline{U}(i'_n,j'_n)]= \sum_{\substack{\alpha,\beta\in \mcal S_n\\ i_{\alpha (k)}=i'_k, j_{\beta(k)}=j'_k}}\Wg_{n,N}(\alpha\beta^{-1}). 
\end{equation}

\subsection{Free cumulants and the M\"obius function $\mu$.}As explained in \cite{Biane1997}, it is equivalent to consider lattices of non-crossing partitions or sets of permutations endowed with an appropriate distance. For our purposes, it is more suitable to define the free cumulants using sets of permutations. Let us endow $\mcal S_n$ with the metric $d$, by setting for any $\alpha,\beta\in \mcal S_n$,  $$d(\alpha,\beta)= n- \#(\beta\alpha^{-1}),$$
where $\#(\beta\alpha^{-1})$ is the number of cycles of $\beta\alpha^{-1}$.
We endow the set $\mcal S_n$ with the partial order given by the relation $\sigma_1\preceq\sigma_2$ if $d(\Id_n,\sigma_1)+ d(\sigma_1,\sigma_2)=d(\Id_n,\sigma_2)$, or similarly if $\sigma_1$ is on a geodesic between $\Id_n$ and $\sigma_2$.

Given a state $\Phi: \C\langle x_j,x_j^*\rangle_{j\in J}\to \C,$ we define the free cumulants $(\kappa_n)_{n\in \N}$ recursively on $\C\langle x_j,x_j^*\rangle_{j\in J}$ by the system of equations: $\forall y_1,\ldots, y_n\in \C\langle x_j,x_j^*\rangle_{j\in J}$
	\eqa\label{Def:FreeCum}
		\Phi(y_1\cdots y_n)=\sum_{\sigma \preccurlyeq (1\cdots n)} \prod_{\substack{(c_1\ldots c_k)\\\text{ cycle of }\sigma}}\kappa(y_{c_1},\ldots, y_{c_k}).
	\qea
Let us fix $y_1,\ldots, y_n\in \C\langle x_j,x_j^*\rangle_{j\in J}$ and denote by respectively $\phi$ and $k$  the functions from $\mcal S_n$ to $\C$ given by
$$\phi(\alpha)= \prod_{\substack{(c_1\ldots c_k)\\\text{ cycle of }\sigma}}\Phi(y_{c_1}\ldots y_{c_k})\ \ \text{and}\ \ k(\alpha)= \prod_{\substack{(c_1\ldots c_k)\\\text{ cycle of }\sigma}}\kappa(y_{c_1},\ldots ,y_{c_k}),$$
which are such that $\phi((1\cdots n))=\sum_{\pi\preccurlyeq (1\cdots n)} k(\pi).$ In fact, we have more generally the relation $$\phi(\alpha)=\sum_{\pi\preccurlyeq \sigma} k(\pi).$$
Note that $\phi=k\star \zeta,$
where $\zeta$ is identically equal to one. The identically one function $\zeta$ is invertible for the convolution $\star$ (see \cite{Biane1997}), and its inverse $\mu$ is called M\"obius function. It allows us to express the free cumulants in terms of the trace:
\begin{equation}\label{cumtrace}k=\phi\star \mu.\end{equation}

\subsection{Asymptotics of the Weingarten function.}One can observe that, for any pair of functions $f,g: \mcal S_n\to \C$ and $\pi \in \mcal S_n$,
$$\sum_{\pi\in \mcal S_n}N^{d(\Id_n,\sigma)-d(\Id_n,\pi)-d(\pi,\sigma)}f(\pi)g(\pi^{-1}\sigma)=f\star g (\sigma)+o(1).$$
Defining the convolution $\star_N$ as
	\eq
		f\star_N g & = & N^{n}\Omega_{n,N}^{-1}((N^{-n}\Omega_{n,N} f)*(N^{-n}\Omega_{n,N} g))\\
		& = & \sum_{\pi\in \mcal S_n}N^{d(\Id_n,\sigma)-d(\Id_n,\pi)-d(\pi,\sigma)}f(\pi)g(\pi^{-1}\sigma),
\qe
it follows that $\star$ is the limit of $\star_N$. Because $\Wg_{n,N}$ is the inverse of $\Omega_{n,N}$ for the convolution $\ast$, we have $(N^{2n}\Omega_{n,N}^{-1}\Wg_{n,N})\star_N \zeta =N^{-n}\Omega_{n,N},$
from which we deduce that $(N^{2n}\Omega_{n,N}^{-1}\Wg_{n,N})\star \zeta=\delta_{\Id_n}+o(1)$, or similarly that $$N^{2n}\Omega_{n,N}^{-1}\Wg_{n,N}=\mu+o(1).$$
More generally, if $f,f_N: \mcal S_n\to \C$ are such that $f_N=f+o(1)$, then
\begin{equation}\label{WeingartenCumulantsLibres}
N^{n}\Omega_{n,N}^{-1}((\Omega_{n,N}f_N)*\Wg_{n,N})=(f_N)\star_N (\Wg_{n,N})=f\star \mu +o(1) .\end{equation}

\begin{proof}[Proof of Theorem \ref{Th:Matrices}]  Let $\mbf 
X_N = (X_j)_{j\in J}$ a family of unitarily invariant random matrices which converges in $^*$-distribution, as $N$ 
goes to infinity, to $\mbf x = (x_j)_{j\in J}$ family of some non-commutative probability space $(\mathcal A,\Phi)$. We fix $m\geq 1$ and test graph $T_i=(V_i,E_i,j_i)\in \mbb C \mcal T\langle J \rangle, i=1\etc m,$ and show the convergence stated in \eqref{Eq:Matrices}.

By taking the real and the imaginary parts, 
we can assume that the matrices of $\mbf X_N$ are Hermitian and so we do not consider adjoint of the matrices. We 
shall denote by $T=( V, E, j)$ the labeled graph obtained from the disjoint unions of $T_1\etc T_m,$ where the label map is given by restriction: $j_{| V_i}= j_i $  for $i=1\etc m.$  
\par We consider a random unitary matrix $U$, distributed according to the Haar distribution, and independent of $\mbf X_N$.  By assumption $\mbf Z_N :=U\mbf X_N U^*\in M_N(\C)$ has the same distribution as $\mbf X_N$. We denote respectively by $\underline{e}$ and $\overline{e}$ the origin vertex and the goal vertex of $e$. Then
 	\eq
		\lefteqn{\tau_{\mbf X_N}[T_1\etc T_m] }\\
		& = & \frac 1 {N^m}  \sum_{\phi: V \to [N]}\esp\left[ \prod_{e\in E} Z_{j(e)}\big( \phi(\underline{e}),\phi(\overline{e})\big)\right] \\
	& = & \frac 1 {N^m} \sum_{\substack{\phi: V \to [N] \\ \varphi,\varphi': E \to [N]}}   \esp\left[\prod_{e\in E}U\big(\phi(\underline{e}),\varphi(e)\big)  \overline{U}\big(\phi(\overline{e}),\varphi'(e)\big)  \right]\esp\left[ \prod_{e\in E} X_{j(e)}\big(\varphi(e),\varphi'(e)\big)\right].
	\qe 
 
\noindent In the integration formula (\ref{IntegrationHaar}), the number $n$ of occurrence of each term $U(i,j)$ is the cardinality of $E$ and the sum over permutations of $\{1,\ldots,n\}$ is replaced by a sum over the set  $\mcal S_E$ of permutations of the edge set $E$. By identifying $E$ with the set of integers $\{1,\ldots, |E|\}$, we consider that $\Wg_{n,N}$ is defined on $\mcal S_E$ instead of $\mcal S_n$.  Then, one has
	\eq
		\lefteqn{	\tau_{\mbf X_N}[T_1\etc T_m] }\\
	 & = & \frac 1 {N^m} \sum_{\alpha,\beta\in \mcal S_E}\Wg_{n,N}(\alpha\beta^{-1})\sum_{\substack{\phi: V \to \{1,\ldots, N\}\\ \varphi,\varphi': E \to \{1,\ldots, N\}\\ \phi\left(\underline{\alpha(e)}\right)=\phi(\overline{e}) ,\varphi(\beta(e))=\varphi'(e)}}\esp\left[ \prod_{e\in E} X_{j(e)}\big(\varphi(e),\varphi'(e)\big)\right].
	\qe
For any permutation $\alpha\in \mcal S_E$, let $\pi(\alpha)$ be the smallest partition of $V$ such that, for all $e\in E$, $\overline{e}$ is in the same block as $\underline{\alpha(e)}$. Summing over $\phi$ in the previous expression yields  

\begin{align*}
	\lefteqn{ \tau_{\mbf X_N}[T_1\etc T_m]}\\
	&= &\sum_{\alpha,\beta\in \mcal S_E}N^{\# \pi(\alpha)-m}\Wg_{n,N}(\alpha\beta^{-1})\sum_{\substack{ \varphi,\varphi': E \to \{1,\ldots, N\}\\ \varphi(\beta(e))=\varphi'(e)}}\esp\left[ \prod_{e\in E} X_{j(e)}\big(\varphi(e),\varphi'(e)\big)\right]\\
 &=&\sum_{\alpha,\beta\in \mcal S_E}N^{\# \pi(\alpha)-m}\Wg_{n,N}(\alpha\beta^{-1})\esp\left[\prod_{\substack{(e_1\ldots e_k)\\\text{ cycle of }\beta}}\Tr(X_{j(e_1)}X_{j(e_2)}\ldots X_{j(e_k)})\right]
\end{align*}
To conclude we will need the following
\begin{Lem} i) For any permutation $\alpha\in \mcal S_E$, $\#\pi(\alpha)+\#\alpha\le \# E+m$  and the equality implies that the graph of $T^{\pi(\alpha)}$ is the disjoint union of $m$ oriented cacti, with resp. set of edges $E_1\etc E_m,$ and that $\alpha$ fixes the sets $E_1\etc E_m.$ 

\noindent ii)  The map   \begin{align*}
\pi:\{\alpha: \#\pi(\alpha)+\#\alpha=  \# E+m\}&\longrightarrow \{\pi : \text{the graph of }T^\pi \text{ is the disjoint union } \\
&\text{of } m \text{ oriented cacti with resp. edges set } E_1\etc E_m\}
\end{align*} is a bijection whose inverse $\gamma$ is given, for all $\pi\in \mcal P(V)$ such that $T^\pi$ is a disjoint union of $m$ oriented cacti with resp. edges $E_1\etc E_m$, by the permutation $\gamma(\pi)$ whose cycles are the biconnected components of $T^\pi$.\label{lemmabijection}
\end{Lem}
\begin{proof}[Proof Lemma~\ref{lemmabijection}]i) Let $\alpha\in  \mcal S_E$. Let us define a connected graph 
$G_\alpha$ whose vertices are the cycles of $\alpha$ altogether with the blocks of $\pi(\alpha)$, and whose 
edges are defined as follow. There is an edge between a cycle $c$ of $\alpha$ and a block $b$ of $\pi(\alpha)$ if 
and only if there is an edge $e$ of $T$ such that $e\in c$ and $\overline{e}\in b$. This way, the edges of 
$G_\alpha$ are in bijective correspondence with the edges of $T$. Therefore, $\#\pi(\alpha)+\#\alpha\le \# E+m$  
with equality if and only $G_\alpha$ is the disjoint unions of  two trees.

In fact, each cycle of $\alpha$ yields a cycle in $T^{\pi(\alpha)}$, and in the case where $G_\alpha$ is acyclic, 
there exist no other cycle in $T^{\pi(\alpha)}$.  What is more, since $T_1\etc T_m$ are connected, if $T^\pi$ 
has $m$ connected components, the latter cannot use edges of several sets among $E_1\etc E_m$. Hence, the biconnected 
component of $T^{\pi(\alpha)}$ are exactly the 
cycles of $\alpha$, that cannot use edges from several sets $E_1\etc E_m$, and $T^{\pi(\alpha)}$ is therefore the disjoint union of $m$ oriented cacti, with $\alpha$ fixing each set $E_i$, $i=1\etc m$.

ii) $\pi\circ\gamma$ and $\gamma\circ \pi$ are the identity functions: $\pi$ is one-to-one and its inverse is $\gamma$.
\end{proof}

For all $\alpha\in \mcal S_E$, set $$\phi_N(\alpha)=N^{-\# \alpha}\esp\left[\prod_{\substack{(e_1\ldots e_k)\\
\text{ cycle of }\sigma}}\Tr(X_{\gamma(e_1)}X_{\gamma(e_2)}\ldots X_{\gamma(e_k)})\right]$$ and $$ 
\phi(\alpha)=\prod_{\substack{(e_1\ldots e_k)\\\text{ cycle of }\sigma}}\Phi(x_{\gamma(e_1)}x_{\gamma(e_2)}\ldots 
x_{\gamma(e_k)})$$ in such a way that that $\phi_N=\phi+o(1)$. Let us fix $\alpha\in \mcal S_E$. On the one hand 
we have $$N^{\#\pi(\alpha)+\#\alpha-\# E-m}=\one_{\#\pi(\alpha)+\#\alpha=  \# E+m}+o(1).$$ On the other hand, 
according to \eqref{WeingartenCumulantsLibres}, the quantity
	\eq
 	\sum_{\beta\in \mcal S_E}N^{\# E-\#\alpha}\Wg_{n,N}(\alpha\beta^{-1})\esp\left[\prod_{\substack{(e_1\ldots e_k)\\\text{ cycle of }\beta}}\Tr(X_{\gamma(e_1)}X_{\gamma(e_2)}\ldots X_{\gamma(e_k)})\right] 	\qe
is equal to $((\phi_N)\star_N \Wg_{n,N})(\alpha) = 
(\phi\star \mu)(\alpha)+o(1).$ Let us write  $\alpha_1\times \dots \times \alpha_m$ for the permutation whose restriction to $E_1\etc E_m$  is given by $\alpha_i\in \mcal S_{E_i},$ for $i=1\etc m.$ It follows that 
	$$\tau_{\mbf X_N}(T_1\etc T_m) 
	=\sum_{\substack{\alpha_i \in \mcal S_{E_i}, i =1\etc m \\\#\pi(\alpha_i\times \dots \times \alpha_m)+\#\alpha_1\times\dots \times \alpha_m=  \# E+m}}(\phi\star \mu)(\alpha_1\times\dots \times \alpha_m)+o(1).$$
From \eqref{cumtrace}, we know that $(\phi\star \mu)(\alpha)=k(\alpha)=\prod_{\substack{(e_1\ldots e_k)\\\text{ cycle of }\alpha}}\kappa(x_{\gamma(e_1)},\ldots, x_{\gamma(e_k)})$.  Let us write now $\pi_1\sqcup\pi_2,$ the partition of $E$ that is finer than $\{E_1\etc E_m\}$ and whose restriction of these $m$ sets is fixed, when $\pi_i\in \mcal P(V_i)$, $i=1\etc m$. Thanks to Lemma \ref{lemmabijection}, we can now write
	\begin{align*}
	\tau_{\mbf X_N}(T) 
	&=\sum_ {\substack{\pi_i\in \mcal P(V_i), i=1\etc m \\ T_i^{\pi_i} \text{ is an oriented cactus}}}\ \prod_{\substack{(e_1\ldots e_k)\\\text{ cycle of }\gamma({\pi_1\sqcup \dots \sqcup \pi_m})}}\kappa(x_{\gamma(e_1)},\ldots, x_{\gamma(e_k)})+o(1)\\
&=\sum_{\substack{\pi_i\in \mcal P(V_i), i=1\etc m\\ T_i^{\pi_i } \text{ is an oriented cactus}}}\ \prod_{\substack{(e_1\ldots e_k)\\\text{ simple cycle of one graph }T_i^{\pi_i} }}\kappa(x_{\gamma(e_1)},\ldots, x_{\gamma(e_k)})+o(1).\end{align*}
In order to pursue the computation, let $t_i$ be the test graph $(V_i,E_i, \lambda_i(e))\in \mbb C \mcal T\langle \mathcal{G}(\mathcal{A}) \rangle$ such that $\lambda_i(e)=x_{j_i(e)}$, for $i=1,2.$ By Definition of unitarily invariant traffics, we get
	\begin{align*}
		\tau_{\mbf X_N}(T_1\etc T_m) &=\sum_{\pi_i\in \mcal P(V_i), i=1\etc m } \prod_{i=1}^m\tau_\Phi^0[t_i^{\pi_i}]+o(1)\\
&=\prod_{i=1}^m \tau_{X}[t_i]+o(1)\end{align*}
so that $\tau_{\mbf X_N}(T_1\etc T_m)$ converges towards the expected limit.
\end{proof}
\begin{Rk} From the above proof, it is tempting to believe that expansions of moments of the evaluation of  test graphs in powers of $N^{-1}$   should actually be  expansions in powers of $N^{-2}$, so that for any   $*$-test 
graph $T=(V,E,j\times \epsilon)\in \mbb C \mcal T\langle J\times \{1,\ast\} \rangle$,  the fluctuations of $\frac{1}{N}\Tr( T(\mbf X_N))-\esp\big[\frac{1}{N}\Tr( T(\mbf X_N))\big]$ should be of order $N^{-1}.$ This is nonetheless wrong as shows the following simple example.   Consider a random $N\times N$ matrix $A,$ whose 
law is invariant  by unitary conjugation and the test graph $T$ with one simple edge labeled by $A$ and one extremity equal both to the input and output.  For the associated traffic distribution as in Example \ref{Ex:MgAlg}, $\Tr(T(\mbf X_N))= \sum_{1\le i,j\le N} A_{i,j}.$   In the setting of the central limit theorem where entries 
of $A$ have variance of order $\frac 1 N$, the fluctuations of $\frac{1}{N}\Tr( T(\mbf X_N))$  are of order $O(\frac{1}{\sqrt{N}}).$ 
\end{Rk}

\section{Canonical construction of spaces of free type}
\label{section:Canonical_construction}

The purpose of this section is to prove the Theorem \ref{MainThfree}, which states that any tracial $^*$-probability space can be enlarged into a traffic space. 

\subsection{Free $\mcal G$-algebra generated by an algebra}

We first describe how an algebra can be canonically extended into a $\mcal G$-algebra.

\begin{Def}
Let $\mathcal A$ be an algebra. We denote by $\mcal G(\mathcal{A})$ the $\mcal G$-algebra $\mathbb{C}\mathcal{G}\langle \mathcal{A} \rangle$ of graph polynomials labeled in $\mcal A$, quotiented by the following relations: for all $g\in \mcal G_{n-k+1}$, $a_1,\ldots, a_n\in \mcal A$ and $P$ non-commutative polynomial in $n$ variables, we have
\begin{equation}Z_g(\cdot \overset{P(a_1,\ldots,a_k)}{\longleftarrow} \cdot\otimes \cdot \overset{a_{k+1}}{\leftarrow} \cdot\otimes\ldots\otimes \cdot\overset{a_n}{\leftarrow} \cdot)=Z_g\big(P(\cdot \overset{a_1}{\leftarrow} \cdot,\ldots,\cdot \overset{a_k}{\leftarrow} \cdot)\otimes \cdot \overset{a_{k+1}}{\leftarrow} \cdot\otimes\ldots\otimes \cdot\overset{a_n}{\leftarrow} \cdot\big)\label{Firstrelation}\end{equation}
which allows to consider the algebra homomorphism $V:\mathcal{A} \to \mcal G(\mathcal{A})$ given by $a\mapsto (\cdot\overset{a}{\leftarrow} \cdot)$.
\end{Def}
As for the free product of $\mcal G$-algebra of Section \ref{Sec:FreeProdAlg}, the space $\mcal G(\mcal A)$ is a $\mcal G$-algebra. Moreover, it is the free $\mcal G$-algebra generated by the algebra $\mathcal{A}$ in the following sense.
\begin{Prop}Let $\mathcal{B}$ be a $\mcal G$-algebra and $f:\mathcal{A}\to \mathcal{B}$ a algebra homomorphism. There exists a unique $\mcal G$-algebra homomorphism $f':\mcal G(\mathcal{A})\to  \mathcal{B}$ such that $f=f'\circ V$. As a consequence, the algebra homomorphism $V:\mathcal{A} \to \mcal G(\mathcal{A})$ is injective.
\end{Prop}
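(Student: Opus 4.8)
The plan is to reproduce, in the present setting, the argument used just above for the free product of $\mcal G$-algebras, the only new ingredient being that the defining relation of $\mcal G(\mcal A)$ now encodes arbitrary noncommutative polynomials rather than products of elements lying in a common subalgebra.

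First I would define $f'$ by an explicit formula on the spanning set. Since $\mcal G(\mcal A)$ is a quotient of $\mathbb{C}\mcal G^{(2)}\langle\mcal A\rangle$, I would set, for a $2$-graph monomial,
$$\tilde f\big(Z_g(\cdot\overset{a_1}{\leftarrow}\cdot\otimes\ldots\otimes\cdot\overset{a_n}{\leftarrow}\cdot)\big)=Z_g\big(f(a_1)\otimes\ldots\otimes f(a_n)\big),$$
and extend by linearity to $\tilde f:\mathbb{C}\mcal G^{(2)}\langle\mcal A\rangle\to\mcal B$. The $\mcal G$-action axiom $Z_{g\circ(g_1,\ldots,g_K)}=Z_g\circ(Z_{g_1}\otimes\ldots\otimes Z_{g_K})$, valid in both $\mathbb{C}\mcal G^{(2)}\langle\mcal A\rangle$ and $\mcal B$, shows at once that $\tilde f$ is a $\mcal G$-morphism: substituting a monomial into an edge of $g$ on the source side matches composing the corresponding graph operations on the target side. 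In particular $\tilde f$ is a unital algebra homomorphism for the product $Z_{\cdot\overset{1}{\leftarrow}\cdot\overset{2}{\leftarrow}\cdot}$.

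The crucial verification is that $\tilde f$ kills the defining relation \eqref{Firstrelation}, so that it descends to $f':\mcal G(\mcal A)\to\mcal B$. Applying $\tilde f$ to the left-hand side of \eqref{Firstrelation} and using that $f$ is an algebra homomorphism gives $Z_g\big(P(f(a_1),\ldots,f(a_k))\otimes f(a_{k+1})\otimes\ldots\otimes f(a_n)\big)$. Applying $\tilde f$ to the right-hand side and using that $\tilde f$, being a $\mcal G$-morphism, commutes with the polynomial $P$ (whose operations are themselves graph operations), together with $\tilde f(\cdot\overset{a_i}{\leftarrow}\cdot)=f(a_i)$, yields exactly the same element. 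Hence \eqref{Firstrelation} is preserved and $f'$ is well defined; the identity $f=f'\circ V$ is then immediate on single edges. Uniqueness follows because $V(\mcal A)$ generates $\mcal G(\mcal A)$ as a $\mcal G$-algebra, so any $\mcal G$-morphism agreeing with $f$ on $V(\mcal A)$ is determined everywhere, exactly as in the free-product proposition.

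For the last assertion, injectivity of $V$ reduces to exhibiting a single $\mcal G$-algebra $\mcal B_0$ and an injective algebra homomorphism $f_0:\mcal A\to\mcal B_0$: indeed, writing $f_0=f_0'\circ V$ for the induced $f_0'$ forces $V$ to be injective. The natural candidate is the left regular representation $a\mapsto L_a$, which is faithful since $\mcal A$ is unital, realized inside the matrices $\mrm M_{\mcal V}(\mbb C)$ of Example \ref{Ex:MgAlgGraph} after fixing a basis $\mcal V$ of $\mcal A$. I expect this to be the only delicate point: one must check that the representing matrices lie in the $\mcal G$-algebra $\mrm M_{\mcal V}(\mbb C)$, i.e. are both row- and column-finite. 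Column-finiteness is automatic, whereas row-finiteness requires choosing the module (or basis) so that each $L_a$ has finite rows; granting such a faithful model, the injectivity of $V$ follows immediately.
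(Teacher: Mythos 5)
Your treatment of the universal property is correct and is essentially the paper's own argument: the paper defines $f'$ by the same formula on monomials $Z_g(\cdot \overset{a_1}{\leftarrow}\cdot\otimes\ldots\otimes\cdot\overset{a_n}{\leftarrow}\cdot)$, asserts that it ``obviously respects the relation'', and derives uniqueness from the fact that $V(\mcal A)$ generates $\mcal G(\mcal A)$ as a $\mcal G$-algebra. Your explicit verification that $\tilde f$ kills the relation \eqref{Firstrelation} --- comparing $f(P(a_1,\ldots,a_k))=P(f(a_1),\ldots,f(a_k))$ on the left with the fact that $\tilde f$, being a $\mcal G$-morphism and hence an algebra homomorphism for $Z_{\cdot\overset{1}{\leftarrow}\cdot\overset{2}{\leftarrow}\cdot}$, commutes with $P$ on the right --- is exactly the content the paper leaves implicit, and it is sound.

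The one genuine gap is in the final assertion, the injectivity of $V$. Your strategy (exhibit an injective algebra homomorphism $f_0$ from $\mcal A$ into some $\mcal G$-algebra and invoke $f_0=f_0'\circ V$) is the right one, but the model you propose does not work as stated: the left regular representation of $\mcal A$ on itself, written in a basis $\mcal V$, is always column-finite but need not be row-finite, and $\mrm M_{\mcal V}(\mbb C)$ as defined in Example \ref{Ex:MgAlgGraph} requires both (row-finiteness is needed even to define operations such as $deg$). You flag this yourself and then proceed ``granting such a faithful model'', so the argument is left conditional on a choice of basis making every $L_a$ row-finite, which you do not justify and which is not obviously possible for an arbitrary unital algebra. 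To be fair, the paper's own proof is silent on this point and offers no argument at all for the injectivity of $V$; but as a self-contained proof of the stated proposition, this step remains open in your write-up and would need either a different faithful $\mcal G$-algebra model or a direct argument (e.g.\ a linear retraction of $\mcal G(\mcal A)$ onto $\mcal A$ compatible with the relations).
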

\begin{proof}
The existence is given by the following definition of $f'$ on $\mcal G(\mathcal{A})$:
$$f'(Z_g(\cdot \overset{a_1}{\longleftarrow} \cdot\otimes\ldots\otimes \cdot\overset{a_n}{\leftarrow} \cdot))=Z_g(f(a_1)\otimes\ldots \otimes f(a_n))$$
for all $a_1,\ldots,a_n\in \mathcal{A}$; which obviously respects the relation defining $\ast_{j\in J} \mathcal{A}_j$.

The uniqueness follows from the fact that $f'$ is uniquely determined on $ V(\mathcal{A})$ (indeed, $f'(a)$ must be equal to $f(b)$ whenever $a=V(b)$) and that $V(\mathcal{A})$ generates $\mcal G(\mathcal{A})$ as a $\mcal G$-algebra.
\end{proof}

For example, the free $\mcal G$-algebra generated by the variables $\mbf x=(x_i)_i\in J$ and $\mbf x^*=(x_i^*)_i\in J$ is the $\mcal G$-algebra $\mbb C \mcal G\lara$ of graphs whose edges are labelled by $\mbf x$ and $\mbf x^*$.

\subsection{Algebraic construction}

Let $(\mcal A, \Phi)$ be a non-commutative probability space such that $\Phi$ is a trace. We want to equip the $\mcal G$-algebra $\mcal G(\mcal A)$ with a combinatorial distribution that is of cactus type and whose induced distribution on $\mcal A \subset \mcal G(\mcal A)$ is $\Phi$. We firstly define $\tau : \mbb C \mcal T\langle \mcal A \rangle \to \mbb C$ by the cactus formula, namely for any test graph $T$ labeled in $\mcal A$, 
	$$\tau^0[T] = \one( T \mrm{ \ is \ a \ w.o. \ cactus \ }) \prod_{C \mrm{ \ cycle \ of \ } T} \kappa(C),$$
where as usual $\kappa$ is the free cumulant function with respect to $\Phi$ of the variable along the oriented cycle. Then, as in Section \ref{Sec:FreeProdAlg}, we consider the map $\tilde \tau : \mbb C \mcal T\big\langle \mbb C \mcal G\langle \mcal A \rangle \big\rangle \to \mbb C$ defined as follow: for any test graph $T$ with edges $e_1\etc e_k$ labeled respectively by graph monomial $g_1\etc g_K$, we set $\tilde \tau[T] = \tau[T_g]$ where $T_g$ is obtained by replacing the egde $e_k$ by the graph $g_k$ for any $k=1\etc K$. We extend the definition by multi-linearity with respect to the edges and set $\tilde \tau\big[ (\cdot) \big] = 1$. By Lemma \ref{Lem:AssProdEns}, $\tilde \tau$ satisfies the associativity property and then endows $\mbb C \mcal G\langle \mcal A \rangle$ with a structure of algebraic traffic space. It remains to prove that it induces a same structure on $\mcal G(\mcal A)$.

\begin{Prop}\label{interestingproposition}
The linear form $\tilde \tau$ is invariant under the relations~\eqref{Firstrelation} defining $\mcal G(A)$, and consequently yields to an algebraic traffic space $(\mcal G(A) , \tilde \tau)$. Furthermore, the trace induced by $\tilde \tau$ coincides with $\Phi$ on $\mcal A$, seen as a subalgebra of $\mcal G(\mcal A)$.
\end{Prop}

\begin{proof}It is sufficient to prove the following:
\begin{enumerate}
\item For any test graph $T$ having an edge $e$ labeled $a_1 + \alpha a_2$, where $a_1,a_2 \in \mcal A$ and $\alpha \in \mbb C$, one has $\tau[T] = \tau[T_1] + \alpha \tau[T_2]$ where $T_i$ is obtained from $T$ by putting label $a_i$ on $e$. 
\item For any test graph $T$ having an edge $e$ labeled $1_\mcal A$, one has $\tau[T] = \tau[T_\bullet]$ where $T_\bullet$ is obtained by identifying source and target of $e$ and suppressing this edge.
\item For any test graph $T$ having an edge $e$ labeled $a_1 a_2$, where $a_1,a_2 \in \mcal A$, one has $\tau[T] = \tau[T_\times]$ where $T_\times$ is obtained by replacing $e$ by two consecutive edges $(\cdot \overset{a_1}\leftarrow\cdot \overset{a_2}\leftarrow \cdot)$. 
\end{enumerate}
The first property is an immediate consequence of the linearity of the cumulants. Let us prove the others properties at the level of the injective trace.
\begin{Lem}\label{Lem:gplus}With notations as above, we have the following formulas:
\begin{enumerate}
\item Whenever $e$ has label $1_\mcal A$, one has $\tau^0[T] = \tau^0[T_\bullet]$ if the goal and the source of the edge $e$ are equal in $T$, and $\tau^0[T]=0$ otherwise.
\item Whenever $e$ has label $a_1a_2$, denote by $V$ the vertex set of $T$ and by $v_0$ the new vertex in $T_\times$. Then for any partition $\pi$ of $V$,  one has $\tau^0[T^\pi] = \sum_{\substack{\sigma\in \mathcal{P}(V\cup\{v_0\})\\\sigma\setminus \{v_0\}=\pi}} \tau^0[ T_{\times}^\sigma]$.
\end{enumerate}
\end{Lem}

This implies the proposition as we see now. When $e$ has label $1_{\mcal A}$, we get 
\eq
	\tau[T] = \sum_{\pi \in \mcal P(V)} \tau^0[T^\pi] =\sum_{\substack{ \pi \in \mcal P(V_\bullet)}} \tau^0[T_\bullet^\pi] = \tau[T_\bullet].
\qe
Moreover, when $e$ has label $a_1a_2$, one has
	\eq
		\tau[T] & = & \sum_{\pi \in \mcal P(V) } \tau^0[T^\pi] =   \sum_{\pi \in \mcal P(V) }\sum_{\substack{\sigma\in \mathcal{P}(V\cup\{v_0\})\\\sigma\setminus \{v_0\}=\pi}} \tau^0[T_\times^\sigma]\\
		& = & \sum_{\sigma \in \mcal P(V \cup\{v_0\})} \tau^0[T_\times^\sigma] = \tau[T_\times].
	\qe
Finally, for any $a\in \mcal A$, seen as an element of $\mcal A$, its trace associated to $\tau$ is given by $\tau(\circlearrowleft_{a})=\tau^0 (\circlearrowleft_{a})=\kappa(a)=\Phi(a)$ as expected. This finishes the proof of the proposition.
\end{proof}

\begin{proof}[Proof of Lemma~\ref{Lem:gplus}]The first item follows from the fact that a cumulant involving $1_{\mcal A}$ is equal to $0$, except $\kappa(1_{\mcal A})=1$ (see~\cite[Proposition 11.15]{NS}). As a consequence, for a cactus $T$ having a loop labeled $1_\mcal A$, we can remove the loop without changing the value of the invective trace.

Let us prove the second item, and consider a test graph $T$ with an edge $e$ labeled $a_1a_2$ and $T_\times$ defined as before. Let $\pi$ be a partition of the vertex set of $T$. If $T^\pi$ is not a cactus, then the two side of the equation are equal to zero. Assume that $T^\pi$ is a cactus. We denote by $c$ the cycle of $\cdot \overset{a_1a_2}{\leftarrow} \cdot $ in $T^{\pi}$ and $a_1a_2,b_2,\ldots, b_{k-1}$ the elements of the cycle $c$ starting at $a_1a_2$.

Let us consider a partition $\sigma\in \mathcal{P}(V\cup\{v_0\})$ such that $T_\times^\sigma$ is a cactus and $\pi=\sigma\setminus \{v_0\}$. Then, we have two cases:
\begin{enumerate}
\item $v_0$ is of degree $2$ (this occurs for only one partition $\sigma$ given by $\pi\cup\{\{v_0\}\}$). Denoting by $c^+$ the cycle of $T_\times^\sigma$ which contains $v_0$, we have $c^+=(a_2,b_2,\ldots, b_{k-1},a_1)$. The cycles of $T^\pi$ and $T_\times^\sigma$ different from $c$ and $c^+$ are the same, and by consequence 
	$$\tau^0[T^\pi]/k(a_1a_2,b_2,\ldots, b_{k-1})=\tau^0[T_\times^\sigma]/k(a_2,b_2,\ldots, b_{k-1},a_1).$$
\item $v_0$ is of degree $>2$. We denote by $c_1$ the cycle of $\cdot \overset{a_2}{\leftarrow} \cdot $ in $T_\times^\sigma$, $c_2$ the cycle of $\cdot \overset{a_1}{\leftarrow} \cdot $ in $T_\times^\sigma$ (of course, $c_1$ and $c_2$ are not equal, because if it is the case, $T^\pi$ would be disconnected, which is not possible). The cycles of $T^\pi$ different from $c$ are exactly the cycles of $T_\times^\sigma$ different from $c_1$ or $c_2$. We have $c_1=(a_2,b_2,\ldots , b_l)$ and $c_2=(b_{l+1},\ldots , b_k,a_1)$ with $l$ the place of  the vertex which is identified with $v_0$ in $T_\times^\sigma$. By definition, we have
$$\tau^0[T^\pi]/k(a_1a_2,b_2,\ldots, b_{k-1})=\tau^0[T_\times^\sigma]/(k(a_2,b_2,\ldots , b_l)\cdot k(b_{l+1},\ldots , b_k,a_1)).$$
Conversely, for each vertex $v_1$ in the cycle $c$, we are in the above situation for $\sigma=\pi_{|v_0\simeq v_1}$.
\end{enumerate}
Finally, using \cite[Theorem 11.12]{NS} for computing $k(a_1a_2,b_2,\ldots, b_{k-1})$, we can compute
	\begin{align*}\tau^0[T^\pi]=	&\tau^0[T^\pi]/k(a_1a_2,b_2,\ldots, b_{k-1})\cdot k(a_1a_2,b_2,\ldots, b_{k-1})\\
		=&\tau^0[T^\pi]/k(a_1a_2,b_2,\ldots, b_{k-1})\\
		&\ \ \cdot  \left(k(a_2,b_2,\ldots, b_{k-1},a_1)+\sum_{1\leq l \leq k}k(a_2,b_2,\ldots , b_l)\cdot k(b_{l+1},\ldots , b_k,a_1)\right)\\
		=&\tau^0[T_\times^{\pi\cup\{\{v_0\}\}}]+\sum_{\substack{\sigma\in \mathcal{P}(V\cup\{v_0\})\setminus \{\pi\cup\{\{v_0\}\}\}\\\sigma\setminus \{v_0\}=\pi}}\tau^0[T_\times^{\sigma}]\\
=&\sum_{\substack{\sigma\in \mathcal{P}(V\cup\{v_0\})\\\sigma\setminus \{v_0\}=\pi}}\tau^0(T_\times^\sigma).
\end{align*}
\end{proof}

\subsection{Positivity}\label{Sec:PositivityCactusDistr}
Let $(\mathcal{A},\Phi)$ be a $^*$-probability space. We define $\tau: \mbb C \mcal T \langle \mcal A \rangle \to \mbb C$ by the cactus formula with respect to $\Phi$ and then $(\mathcal G(\mathcal{A}), \tilde \tau)$ as in Proposition~\ref{interestingproposition}. It remains to prove that $\tilde \tau$ satisfies the positivity condition~\eqref{eq:NonNegCond}, and it is actually sufficient to prove that $\tau$ is positive.

In the four steps of the proof, we will prove successively that $\tau\big[t|t^*\big]\geq 0$ for $n$-graph polynomials $t = \sum_{i=1}^L \alpha_i t_i$ with an increasing generality:
\begin{enumerate}
\item the $t_i$ are $2$-graph monomials without cycles and the leaves are outputs, that is chains of edges with possibly different orientations;
\item the $t_i$ are trees whose leaves are the outputs;
\item the $t_i$ are such that $t_i|t_i^*$ have no cutting edges (see Definition \ref{Def:Cutt});
\item the $t_i$ are $n$-graph monomials.
\end{enumerate}

\paragraph{Step 1} By Proposition~\ref{interestingproposition}, the trace associated to $\tau$ coincides with $\Phi$ on $\mcal A \subset \mcal G(\mcal A)$. We still denote it by $\Phi$. Hence we get the positivity if all the $t_i$'s consist in chains of edges all oriented in the same direction. Indeed, we can write $t_i=\cdot \overset{a_i}{\leftarrow} \cdot$ for all $i$ (or $t_i=\cdot \overset{a_i}{\rightarrow} \cdot$ for all $i$) and so, we get
	$$\tau\big[t|t^*\big]=\tau\Big[\sum_{i,j=1}^L\alpha_i\bar \alpha_j t_it_j^*\Big]=\Phi\Big(\sum_{i,j=1}^L\alpha_i\bar \alpha_j a_ia_j^*\Big)\geq 0,$$
by positivity of $\Phi$ on $\mcal A$.  We deduce that $\Phi$ is positive on the subalgebras $\C\langle \cdot \overset{a}{\leftarrow} \cdot:a\in \mathcal{A}\rangle$ and $\C\langle \cdot \overset{a}{\rightarrow} \cdot:a\in \mathcal{A}\rangle$ of $\mcal G(\mcal A)$. From Lemma~\ref{Lem:FreeofTrans} these subalgebras are freely independent, so $\Phi$ is also positive on the mixed algebra $\C\langle \cdot \overset{a}{\leftarrow} \cdot,\cdot \overset{a}{\rightarrow} \cdot:a\in \mathcal{A}\rangle$ (the free product of positive trace is positive \cite[Lecture 6]{NS}). Finally, if the $t_i$'s consist in chains of edges labeled by elements of $\mathcal{A}$, we know that
$$\tau\big[t|t^*\big]=\Phi\big[\sum_{i,j=1}^L\alpha_i\bar \alpha_j t_it_j^*\big]\geq 0.$$
\paragraph{Step 2} Assume that the $t_i$'s are trees whose leaves are the outputs. Let us prove by induction on the number $D$ of all edges of the $t_i$'s that we have $\tau\big[t|t^*\big]\geq 0$.

If the number of edges of the $t_i$'s is $0$, we have $\tau_\Phi\big[t|t^*\big]=\sum_{i,j}\alpha_i\alpha_j^*\geq 0$.  We suppose that $D\geq 1$ and that this result is true whenever the number of edges of the $t_i$'s is less than $D-1$.

We can remove one edge in the following way. Let us choose one leaf $v$ of one of the $t_i's$ which has at least one edge. It is an output and for each tree $t_i$ we denote by $v^{(i)}$ the first node (or distinct leaf if there is no node) of the tree of $t_i$ encountered by starting from this output $v$, and by $t^{(i)}$ the branch of $t_i$ between this output $v$ and $v^{(i)}$. Of course, $v^{(i)}$ can be equal to $v$ and $t^{(i)}$ can be trivial, but there is at least one of the $t^{(i)}$'s which is not trivial. Denote by $\breve t_i$ the $n$-graph obtained from $t_i$ after discarding the $t^{(i)}$'s, and whose output $v$ is replaced by $v^{(i)}$. We claim that
\eq
		\tau\big[t|t^*\big] = \tau\big[t|t^*\big] \times \tau\big[ \breve t_i| \breve t_j^* \big].
	\qe
	Firstly, we can identify the pairs $v^{(i)}$ and $v^{(j)}$ in the computation of the left hand-side. Indeed, we write $\tau\big[  t_i|  t_j^* \big] = \sum_\pi\tau^0\big[ ( t_i| t_j^*)^\pi \big]$, and consider a term in the sum for which $\pi$ does not identify $v^{(i)}$ and $v^{(j)}$. Because $ \breve t_i| \breve t_j^*$ is t.e.c., there exists two disjoints paths between $v^{(i)}$ and $v^{(j)}$. But because $t^{(i)}| t^{(j)*}$ contains a third distinct path, by Lemma \ref{Lem:3co} $\pi$ cannot be a cactus if it does not identify $v^{(i)}$ and $v^{(j)}$  and so $ \tau^0\big[ ( t_i|  t_j^*)^\pi\big]$ is zero.
	
Consider a term in the sum $\sum_\pi\tau^0\big[  (t_i| t_j^*)^\pi \big]$ for which $\pi$ identifies the pairs $v^{(i)}, v^{(j)}$. Assume that a vertex $v_1$ of $\breve t_i| \breve t_j^*$ is identified with a vertex $v_2$ which is not in $\breve t_i| \breve t_j^*$. Assume that $\pi$ does not identify $v^{(i)}$ with $v_1$ and $v_2$. Because $\breve t_i| \breve t_j^*$ is t.e.c. there exists two distinct paths between $v_1$ and $v^{(i)}$ out of $\tau\big[ t^{(i)}| t^{(j)*}\big]$. But there exists also a path between $v_2$ and $v^{(i)}$ in $t^{(i)}| t^{(j)*}$. By Lemma \ref{Lem:3co}, we get that $ (t_i|  t_j^*)^\pi$ is not a cactus and so $ \tau^0\big[ ( t_i|  t_j^*)^\pi\big]$ is zero.
\par Hence, to determine which vertices of $\breve t_i| \breve t_j^*$ are identified with some vertices of $ t^{(i)}| t^{(j)*}$, one can first determine which vertices of $\breve t_i| \breve t_j^*$ are identifies with $v^{(i)}= v^{(j)}$ and which vertices of $t^{(i)}| t^{(j)*}$ are identified with this vertex. Hence the sum over $\pi$ partition of the set of vertices of $ t_i|  t_j^*$ can be reduced to a sum over $\pi_1$ partition of the set of vertices of $\breve t_i| \breve t_j^*$ and a sum over $\pi_2$ partition of the set of vertices of the graph $ t^{(i)}| t^{(j)*}$. Moreover, by definition of $\tau$, for two test graphs $T_1$ and $T_2$, if $T$ is obtained by considering the disjoint union of $T_1$ and $T_2$ and merging one of their vertices, one has $\tau^0[T] = \tau^0[T_1] \times \tau^0[T_2]$. Hence, the contribution of $\breve t_i| \breve t_j^*$ factorizes in $ \tau\big[ \breve t_i| \breve t_j^* \big]$ and the contribution of $ t^{(i)}| t^{(j)*}$ factorizes in  $\tau\big[ t^{(i)}| t^{(j)*}\big]$, and we get the expected result.

From Step 1, we know that $A=\left(\tau\big[ t^{(i)}| t^{(j)*}\big]\right)_{i,j}$ is nonnegative. By induction hypothesis, we know that $B=\left(\tau\big[ \breve t_i| \breve t_j^* \big]\right)_{i,j}$ is also nonnegative. We obtain as desired that the Hadamard product of $A$ and $B$ is nonnegative (\cite[Lemma 6.11]{NS}) and in particular, for all $\alpha_i$, we have
$$\sum_{i,j}\alpha_i\bar{\alpha}_j  \tau\big[ t_i|  t_j^*\big]\geq 0.$$
\paragraph{Step 3}Let us prove that, for all $t_i$ such that $t_i|t_i^*$ have no cutting edges, we have $\tau\big[t|t^*\big]\geq 0$.

For a graph $T$, recall that the t.e.c. components are the maximal subgraphs of $T$ with no cutting edges. We call tree of t.e.c. of $T$ the graph whose vertices are the t.e.c. components of $T$ and whose edges are the cutting edges of $T$.
First of all, our condition is equivalent to the condition that, for each $t_i$, any leaf of the tree of the t.e.c. components of $t_i$ is a component containing an output. Here again, we can proceed by induction. Let $D$ be the total number of t.e.c. components of the $t_i$'s which do not consists in a single vertex.

If $D=0$, we are in the case of the previous step. Let us assume that $D>0$ and that the result is true up to the case $D-1$. We can remove one t.e.c. in the following way. Let us choose a t.e.c. component $t^{(k)}$ which is not a single vertex of a certain $n$-graph monomial $t_k$, for some $k$ in $\{1,\ldots,L\}$. We consider $t^{(k)}$ as a multi $^*$-graph monomial, where the outputs are the vertices which are attached to cutting edges. 
Let $\breve t_k$ be the $n$-graph monomial obtained from $t_k$ by replacing the component $t^{(k)}$ by one single vertex. We define also for $i\neq k$ the $^*$-graph monomial $t^{(i)}$ to be the trivial leaf and set $\breve t_i=t_i$. We claim that
	\eq
		\tau\big[T(t_i,t_j^*)\big] = \tau\big[T(\breve t_i, \breve t_j^*)\big] \times  \tau\big[ t^{(i)}\big] \times  \tau\big[ t^{(j)*}\big]
	\qe
	(of course, this equality is nontrivial only if we consider $i=k$ or $j=k$).

Firstly, the outputs of $t^{(i)}$ can be identified. Indeed, consider  $v_1,v_2$ two distinct outputs of $t^{(i)}$. Writing $\tau\big[t_i|t_j^*\big] = \sum_\pi \tau^0\big[ (t_i|t_j^*)^\pi\big]$, consider a term in the sum for which $\pi$ does not identify $v_1$ and $v_2$. Since $t^{(i)}$ is t.e.c. there exist two distinct simple paths $\gamma_1$ and $\gamma_2$  between $v_1$ and $v_2$. Consider a path from $v_2$ to $v_1$ that does not visit $t^{(i)}$ in $t_i|t_j^*$. Such a path exists as $v_1$ and $v_2$ belong to two subtrees of $t_i$ that are attached to outputs of $t_i$, themselves being attached to the connected graph $t_j^*$. The quotient by $\pi$ yields three distinct paths $\gamma$ between $v_1$ and $v_2$ in $(t_i|t_j^*)^\pi$ which implies that $(t_i|t_j^*)^\pi$ is not a cactus by Lemma \ref{Lem:3co}. Hence, by definition of $\tau$, $\tau^0\big[(t_i|t_j^*)^\pi\big]$ is zero. Thus, when we write $\tau\big[t_i|t_j^*\big] = \sum_\pi \tau^0\big[ (t_i|t_j^*)^\pi\big]$ we can restrict the sum over the partition $\pi$ that identify $v_1$ and $v_2$, therefore, we can replace $t_i$ by the graph $\tilde t_i$  where we have identify $v_1$ and $v_2$. Hence we have 
$ \tau\big[ t_i |t_j^*\big] = \tau\big[ \tilde t_i| \tilde t_j^*\big]$.

Let us write $\tau\big[ \tilde t_i| \tilde t_j^*\big]= \sum_\pi \tau^0\big[ ( \tilde t_i |   \tilde t_j^*)^\pi\big]$. Let $\pi$ be as in the sum. Assume that a vertex $v_1$ of $t^{(i)}$ is identified by $\pi$ with a vertex $v_2$ which is not in $t^{(i)}$. Assume that $\pi$ does not identify $w^{(i)}$ with $v_1$ and $v_2$. Since $t^{(i)}$ is t.e.c. there exist two distinct paths between $v_1$ and $w^{(i)}$ in $t^{(i)}$. But $\breve t_i$ is connected and there exists a third path between $v_2$ and $w^{(i)}$. As usual this implies that $(\tilde t_i|\tilde t_j^*)^\pi$ is not a cactus and so $ \tau_\Phi^0\big[ (\tilde t_i| \tilde t_j^*)^\pi\big]$ is zero.
\par Hence, to determine which vertices of $t^{(i)}$ are identified with some vertices out of $t^{(i)}$, one can first determine which vertices of $t^{(i)}$ are identified with $w^{(i)}$ and which vertices out of $t^{(i)}$ are identified with this vertex. Thus the sum over $\pi$ partition of the set of vertices of $\tilde t_i| \tilde t_j^*$ can be reduced to a sum over $\pi_1$ partition of the set of vertices of $t^{(i)}$ and a sum over $\pi_2$ partition of the set of vertices of the graph with $t^{(i)}$ removed. Moreover, by definition of $\tau$, for two $^*$ test graphs $T_1$ and $T_2$, if $T$ is obtained by considering the disjoint union of $T_1$ and $T_2$ and merging one of their vertices, one has $\tau^0[T] = \tau^0[T_1] \times \tau^0[T_2]$. Hence, the contribution of $T( t_i,  t_j^*)$ factorizes in $ \tau\big[ T(\breve t_i,  t_j^*) \big]$ and the contribution of $ t^{(i)}$ factorizes in  $\tau\big[ t^{(i)}\big]$. We can do the same factorization for the $n$-graph monomial $t_j^*$, and we get the expected result.

Now, setting $
		\beta_i = \alpha_i \tau\big[ t^{(i)}\big]$,
	we have 
	\eq
		\tau\big[T(t,t^*)\big] =\sum_{i,j}\beta_i\bar{\beta}_j \tau\big[T(\breve t_i, \breve t_j^*)\big]
		\qe
		which is nonnegative thanks to the induction hypothesis.
\paragraph{Step 4} A direct proof of the positivity in general case requires appropriate tools, and we bypass this difficulty using both the positivity of the free product (Theorem \ref{Th:PosFreeProd}) and the fact that unitary invariant traffics are of cactus type (Proposition \ref{EqCactusUI}).

We define an auxiliary distribution of traffic ${\tau}':\mbb C \mcal T\langle \mcal A \rangle \to \mbb C$ which is defined to be equal to $\tau$ on the test graphs without cutting edges and equal to $0$ on those with cutting edges. This map $\tau'$  induces   a combinatorial distribution on  the $\mcal G$-algebra $\mathbb{C}\mathcal{G}\langle \mathcal{A} \rangle$ of graph polynomials labeled in $\mcal A$.

On the one hand, the map $\tau'$  does satisfy the positivity property since for any $n$-graph polynomial $t=\sum_{i} \alpha_i t_i$, we have
	\eq
		{\tau'}\big[  t|t^*\big]& = & \sum_{i,j}\alpha_i \bar{\alpha}_j {\tau'}\big[ t_i | t_j^* \big]	 =  \hspace{-1cm}\sum_{\substack{ { i,j}\\  {  t_i |t_j^*}\\  {\text{ without cutting edges}}}} \hspace{-1cm}\alpha_i \bar{\alpha}_j{\tau}\big[ t_i|t_j^*\big]=\hspace{-0.5cm}\sum_{\substack{i,j\\t_i|t_i^*, \, t_j|t_j^*\\\text{ without cutting edges}}}\hspace{-1cm}\alpha_i \bar{\alpha}_j\tau_\Phi\big[t_i|t_j^*\big]\geq 0,
\qe
using the result of the previous step. By positivity of the free product, the distribution remains positive if we enlarge $\mathbb{C}\mathcal{G}\langle \mathcal{A} \rangle$ into a traffic space $\mcal B$ with a unitary traffic $u$ such that $(u,u^*)$ is the limit of a Haar unitary matrix, traffic independent from the elements of $\mathbb{C}\mathcal{G}\langle \mathcal{A} \rangle$. 
We consider the function $f: \mathbb{C}\mathcal{T}\langle \mathcal{A} \rangle\to \mathbb{C}\mathcal{T}\langle \mathcal{B} \rangle$ which replaces each edge $e_a:=\overset{a}{\leftarrow}$ of a graph in $\mathbb{C}\mathcal{T}\langle \mathcal{A} \rangle$ by the edges $\overset{u}{\leftarrow} \cdot \overset{e_a}{\leftarrow} \cdot \overset{u^*}{\leftarrow}$, obtaining a graph whose edges are labelled by elements of $\mathcal{G}\langle \mathcal{A} \rangle \cup \{u,u^*\}\subset \mathcal B$. By unitary invariance, the traffic distribution $\tau'\circ f$ is exactly $\tau$ because they coincide on cycles. Hence $\tau$ is the restriction of a positive combinatorial distribution, so it is positive.

\section{Three types of traffics}\label{Sec:three_types_traffics}
From Proposition~\ref{EqCactusUI}, we recall the following for traffics of free type. Let $(\mcal A,\tau)$ be a traffic space. A family $\mbf a=(a_j)_{j\in J}$  of elements of $\mcal B$ is of \textbf{free type} if one of the following equivalent properties holds :
\begin{enumerate}
\item \textbf{Cactus type.} The injective distribution is supported on well oriented cacti that are multiplicative w.r.t. their cycles.
\item \textbf{Unitary invariance.} The family $\mbf a$ has the same traffic distribution as $u\mbf au^*=(ua_ju^*)_{j\in J}$ where $u$ is traffic independent from $\mbf a$ and is a Haar unitary on $\mcal A$ (i.e. $u$ is unitary and $\Phi(u^k {u^*}^\ell) = \delta_{k,\ell}$ for any $k,\ell\geq 0$).
\end{enumerate}
Thus we have two different characterizations of traffic of free type. A distributional symmetry and a property of the injective distribution. In this section, we will state the  corresponding caracterization for the two other types of traffics (see Table~\ref{Independences}).
\begin{table}[h]
\centering
\caption{The three types of traffics}
\label{Independences}
\begin{tabular}{|l|l|l|}
\hline
{\it Type}         & {\it Distributional symmetry}                                   & {\it Injective distribution}                \\
             &                                                 &                              \\ \hline
{\it Tensor}  & {\bf Diagonality:}     $a = \Delta(a)$                                  & Supported on flowers         \\
             & where $\Delta = Z_{\Circlearrowleft}$ is the diagonal projection                             &                              \\ \hline
{\it Boolean} & {\bf $\mbb J$-Invariance:} $a \equiv \mbb J \otimes a$ in distribution, & Supported on trees           \\
             & for $\mbb J$ the limit of the matrix whose entries are $\frac 1 N$ &                              \\ \hline
{\it Free}    & {\bf Unitary Invariance:} $a\equiv u a u^*$        & Supported on cacti \\
             &  in distribution,  for $u$ traffic independent   & and multiplicative \\
          &   and limit of Haar unitary  matrix    & on cycles \\    \hline
\end{tabular}
\end{table}

\label{Sec:freetype}

\subsection{Boolean type}\label{Sec:Booleantype}

 Let $(\mcal A, \tau)$ be an algebraic traffic space and let $\mcal Y$ a family of elements of $\mcal A$. Let us remark that $\mcal Y$ is of Boolean type whenever one of the following equivalent conditions is satisfied:
\begin{enumerate}
	\item For any $T\in \mcal T\langle \mcal Y\rangle$, one has $\tau[T] = 0$ if $T$ is not a tree, or
	\item for any $T\in \mcal T\langle \mcal Y\rangle$, one has $\tau^0[T] = 0$ if $T$ is not a tree.
\end{enumerate}

In that case, the plain and injective combinatorial distributions coincides, namely $\tau[T] = \tau^0[T]$ for any $T\in \mcal T\langle \mcal Y\rangle$. With respect to the trace $\Phi$ associated to $\tau$, $\mcal Y$ has the null distribution since $\Phi(y) = \tau\big[ \Circlearrowleft(y)\big]=0$ for any $y$ in the algebra spanned by $\mcal Y$.

\begin{Lem} If $\mcal Y$ is of Boolean type, then the non unital algebra generated by $\mcal Y$ is of Boolean type.
\end{Lem}

\begin{proof} Let $T$ be a test graph whose edges are labeled by monomials $m_i = y_{i,1} \dots y_{i,n_i}$ with $y_{i,j}$ in $\mcal Y$. Then $\tau[T] = \tau[\tilde T]$ where $\tilde T$ is obtained by replacing each edge of $T$ by the sequence of edges $(\cdot  \overset{ y_1} \leftarrow  \dots   \overset{ y_n} \leftarrow \cdot)$. The graph $T$ is a tree if and only if $\tilde T$ is a tree, hence the result.
\end{proof}

We now associate a \emph{distributional symmetry} for Boolean type variables. The matrix $\mbb J_N$ whose all entries are $\frac 1 N$ converges in traffic distribution to a traffic $\mbb J$ of Boolean type, whose distribution is given by $\tau[T] = \tau^0[T] = \one(T \mrm{ \ is \ a \ tree})$ for any $T \in \mcal T\langle \mbb J\rangle$. 

\begin{Prop} Let $(\mcal A, \tau)$ be an algebraic traffic space and let $\mcal Y$ a family of elements of $\mcal A$. A family of traffics $\mbf A$ is of Boolean type whenever one of the following equivalent conditions is satisfied:
\begin{enumerate}
	\item \textbf{Trees.} for any $T\in \mcal T\langle \mcal Y\rangle$, one has $\tau^0[T] = 0$ if $T$ is not a tree.
	\item \textbf{$\mbb J$-invariance}The family $\mbf A$ as the same distribution as $\mbb J \otimes \mbf A$ in the tensor product of traffic spaces. 
\end{enumerate}
\end{Prop}

\begin{proof} We have for any $T \in \mcal T\langle \mbb J \otimes \mcal A\rangle$, 
	$$\tau[T] = \tau[T_{\mbb J}]  \times \tau[T_{\mcal A}]  =  \one(T \mrm{ \ is \ a \ tree}) \tau[T_{\mcal A}] .$$
	Hence the $\mbb J$-invariance is equivalent to the fact that the traffic distribution of $\mcal A$  is supported on tree, or equivalently the fact that the injective combinatorial distribution of $\mcal A$  is supported on tree.
\end{proof}

\begin{Ex} Let $\mbf A_N$ be a family of random matrices that converges in traffic distribution (such families can be built from Theorem~\ref{Th:Matrices}). Then for any $M=M_N$, sequence of integers that converges to infinity, the family $\mbb J_M \otimes \mbf A_N $ converges to a family of traffics of Boolean type. Moreover the distribution of $\mbb J_M \otimes \mbf A_N$ with respect to $\Psi_N$ is the same as for $\mbf A_N$.

Together with the asymptotic traffic independence theorem, this gives a new procedure to produce asymptotically Boolean independent matrices. More precisely, if  $\mbf A_N$  and $\mbf B_N$ are independent families of random matrices that converge in traffic distribution, and $S$ is a uniform matrix of permutation of size $(M_N\cdot N)\times (M_N\cdot N)$, then   $S(\mbb J_M \otimes \mbf A_N)S^*$ and $\mbb J_M \otimes \mbf B_N $ are independent and asymptotically traffic independent, thanks to \cite[Theorem 1.8]{Male2011}. Because the limiting traffics are of Boolean type, $S(\mbb J_M \otimes \mbf A_N)S^*$ and $\mbb J_M \otimes \mbf B_N $ are asymptotically Boolean independent with respect to the anti-trace $\Psi_N=\frac{1}{N}\sum_{i,j}\left\langle E_{ij},\cdot \right\rangle$

 Note that the size of the matrices is $(M_N\cdot N)\times (M_N\cdot N)$. In contrast, in \cite[Section 3.1]{Fr03}, the author describe a procedure that leads to Boolean independence using tensor product, which produces matrices of size $N^n$, where $n$ is the number of Boolean independent variables.
\end{Ex}

\subsection{Tensor type}\label{Sec:tensortype}

A test-graph is a \emph{flower} if it has only one vertex. Let $(\mcal A, \tau)$ be an algebraic traffic space and let $\mcal Y$ a family of elements of $\mcal A$; \cite[Proposition 5.8]{Male2011} says that if $\mcal Y$ is of tensor type, for any $T\in \mcal T\langle \mcal Y\rangle$, one has $\tau^0[T] = 0$ if $T$ is not a flower.

In fact, the converse is also true and we have the following.
\begin{Prop} Let $(\mcal A, \tau)$ be an algebraic traffic space and let $\mcal Y$ a family of elements of $\mcal A$. $\mcal Y$ is of \emph{tensor type} whenever one of the following equivalent conditions is satisfied:
\begin{enumerate}
	\item \textbf{Diagonality.} For any $a\in \mcal Y$, one has $a=\Delta(a)$.
	\item \textbf{Flowers.} for any $T\in \mcal T\langle \mcal Y\rangle$, one has $\tau^0[T] = 0$ if $T$ is not a flower.
\end{enumerate}
\end{Prop}

\begin{proof}It remains to prove that if the injective distribution of  $ \mcal Y$ is supported on flowers, we have  $a=\Delta(a)$ for all  $a\in \mcal Y$. It suffices to compute
$\Phi((a-\Delta(a))(a-\Delta(a))^*)=0 $ and we deduce that $a=\Delta(a)$.
\end{proof}

\begin{Lem} If $\mcal Y$ is of tensor type, then the traffic space generated by $\mcal Y$ is of tensor type.
\end{Lem}

\begin{proof} For all $K$-graph operation $g$, we have 
\begin{align*}Z_g (a_1\otimes  \cdots \otimes   a_K)&=Z_g (\Delta(a_1)\otimes  \cdots \otimes   \Delta(a_K))\\
&=Z_{g\circ (\Delta,\ldots,\Delta)} (a_1\otimes  \cdots \otimes   a_K)\\
&=Z_{\Delta\circ g\circ (\Delta,\ldots,\Delta)} (a_1\otimes  \cdots \otimes   a_K)\\
&=\Delta\left(Z_{g\circ (\Delta,\ldots,\Delta)} (a_1\otimes  \cdots \otimes   a_K)\right)\\
&=\Delta\left(Z_{g} (\Delta(a_1)\otimes  \cdots \otimes   \Delta(a_K))\right)\\
&=\Delta(Z_g (a_1\otimes  \cdots \otimes   a_K)).
\end{align*}
\end{proof}

\subsection{Canonical traffic spaces}\label{Sec:canonical}
Proposition~\ref{interestingproposition} and Section \ref{Sec:PositivityCactusDistr} allow also to conclude the following.
\begin{Prop} Let $(\mcal A,\Phi)$ be a tracial $^*$-probability space. There exists a traffic space $\mcal {B}$ such that  $\mcal A\subset  \mcal {B}$ as $*$-algebras, the trace induced by $\mcal B$ on $\mcal A$ is $\Phi$, and the family of traffics $\mcal A$ is of \emph{free type}.\end{Prop}

We now deduce from this canonical construction of traffic spaces of free type an analogue construction for traffics of Boolean type.

\begin{Prop} Let $(\mcal A, \Psi)$ be a non-unital $^*$-probability space. Then, there exists a traffic space $(\mcal B, \tau)$, and an injective morphism of non-commutative probability space $\psi: \mcal A \to \mcal B$ such that $\psi(\mcal A)$ is a family of traffics of Boolean type. 
\end{Prop}

\begin{proof}One the one hand, let $\psi_1 : (\mcal A, \Phi) \to (\mcal B_1, \tau_1)$ be the universal construction of Part II, namely whose image consists in unitarily invariant traffics. One the other hand, let $(\mcal B_2, \tau_2)$ be a traffic space generated by the limit $\mbb J$ of the matrix $\mbb J_N$. Then $(\mcal B,\tau):= (\mcal B_1\otimes \mcal B_2, \tau_1\otimes \tau_2)$ and $\psi : a \mapsto \psi_1(a) \otimes \mbb J$ satisfy the expected properties.
\end{proof}

Finally, we have the same result for traffics of tensor types.

\begin{Prop} Let $(\mcal A,\Phi)$ be a commutative $^*$-probability space. There exists a traffic space $\mcal {B}$ such that  $\mcal A\subset  \mcal {B}$ as $*$-algebras, the trace induced by $\mcal B$ on $\mcal A$ is $\Phi$, and the family of traffics $\mcal A$ is of \emph{tensor type}.\end{Prop}
\begin{proof}It is the first example of \cite[Example 4.10.]{Male2011}. One has just to remind that, for a test-graph $T$ whose edges are labelled by  $\gamma:E\to \mcal A$, we have
$$\tau(T)= \Phi(\prod_{e\in E}\gamma(e)),$$
which allows to prove the positivity of the traffic space easily from the positivity of $\Phi$.
\end{proof}

\subsection{Relations between the traffics of different types, conclusion}

We now investigating the independence relations between traffics of tensor, Boolean and free types. 

\begin{Prop} Let $ \mcal Y$ be a family of traffics of Boolean type, traffic independent from a unital subalgebra $\mcal Z$ of traffics of free or tensor type. Then, with respect to the anti-trace, $\mcal Z$ is \emph{monotone independent} from $\mcal Y$.

More generally, the result holds whenever the unital subalgebra $\mcal Z$ is such that $\Psi(z) = \Phi(z)$ for any $z\in \mcal Z$.
\end{Prop}

\begin{proof} For any $n\geq 2$, any $z_i$ in $\mcal Z$, $i=0\etc n$ and any $y_i$ in $\mcal Y$, $i=1\etc n$,
    \eq
    	\Psi[ z_0 y_1 z_1\dots   y_n z_n ] & = & \tau\Big[ \   \cdot  \overset{ z_0} \leftarrow  \cdot  \overset{ y_1} \leftarrow  \dots  \overset{ z_{n-1}} \leftarrow  \cdot   \overset{ y_n} \leftarrow   \cdot  \overset{ z_n} \leftarrow  \cdot \Big].\\
	\qe
Let $\pi$ be a partition of the above test graph $T$ such that the graph of colored components of $T^\pi$ is a tree and the colored components of $T^\pi$ labeled in $\mcal Y$ are tree. Then $\pi$ do not identify vertices that are not extremal vertices of an edge labeled $z_i$, $i=1\etc n$. If $\pi$ does not identify two vertices of an edge labeled $z_i$, then one can factorizes $\tau^0[ \cdot \overset{ z_{i}} \leftarrow  \cdot]$ in the expression of $\tau^0[T^\pi]$. But $\tau^0[ \cdot \overset{ z_{i}} \leftarrow  \cdot] = \Psi(z_i) - \Phi(z_i) = 0$. Hence we have $\Psi[ z_0 y_1 z_1\dots   y_n z_n ] = \tau^0[T^\pi]$ where $\pi$ is the partition identifying the source and target of each edge labeled in $\mcal Z$. We then get
	\eq
		\Psi[ z_0 y_1 z_1\dots   y_n z_n ] & = &  \prod_{i=1}^n \tau^0 \Big[ \overset{z_i}\Circlearrowleft\cdot \Big] \times  \tau^0\Big[  \overset{ y_1} \leftarrow  \cdot  \overset{ y_2} \leftarrow \dots   \overset{ y_n} \leftarrow \cdot \Big] \\
		& = &  \prod_{i=1}^n \Phi(  {z_i}) \times  \Psi[y_1 y_2\dots y_n].
	\qe
We use in the last line the fact that $\tau$ and $\tau^0$ coincide for test graphs labeled by traffics of Boolean types. Since $\Phi=\Psi$ for elements of $\mcal Z$, we get the result.

\end{proof}

\bibliographystyle{plain}
\bibliography{biblio}

\end{document}